\theoremstyle{plain}
\newtheorem{theorem}{Theorem}[section]
\newtheorem{thm}[theorem]{Theorem}
\newtheorem{cor}[theorem]{Corollary}
\newtheorem{prop}[theorem]{Proposition}
\newtheorem{lem}[theorem]{Lemma}
\newtheorem*{theorem*}{Theorem}
\theoremstyle{definition}
\newtheorem{rem}[theorem]{Remark}
\newtheorem{defn}[theorem]{Definition}
\newtheorem{eg}[theorem]{Example}
\newtheorem{notation}[theorem]{Notation}
\newcommand{\bC}{{\mathbb{C}}}
\newcommand{\bN}{{\mathbb{N}}}
\newcommand{\bT}{{\mathbb{T}}}
\newcommand{\bZ}{{\mathbb{Z}}}
  \newcommand{\A}{{\mathcal{A}}}
  \newcommand{\C}{{\mathcal{C}}}
  \newcommand{\D}{{\mathcal{D}}}
  \newcommand{\E}{{\mathcal{E}}}
  \newcommand{\G}{{\mathcal{G}}}
\renewcommand{\H}{{\mathcal{H}}}
\renewcommand{\O}{{\mathcal{O}}}
  \newcommand{\T}{{\mathcal{T}}}
\newcommand{\fG}{{\mathfrak{G}}}
\newcommand{\fs}{{\mathfrak{s}}}
\newcommand{\ft}{{\mathfrak{t}}}
\newcommand{\upchi}{{\raise.35ex\hbox{\ensuremath{\chi}}}}
\newcommand{\qforal}{\quad\text{for all}\quad}
\newcommand{\Aut}{\operatorname{Aut}}
\newcommand{\id}{{\operatorname{id}}}
\newcommand{\spn}{\operatorname{span}}
\newcommand{\ca}{\mathrm{C}^*}
\newcommand{\mt}{\varnothing}
\newcommand{\ol}{\overline}
\newcommand{\Per}{{\rm Per}}
\newenvironment{enumeratei}{\begin{enumerate}[\upshape (i)]}{\end{enumerate}}
\begin{document}
\title[The ideal structure of self-similar $k$-graph C*-algebras]{The ideal structures of self-similar $k$-graph C*-algebras}
\author[H. Li]{Hui Li}
\address{Hui Li,
Research Center for Operator Algebras and Shanghai Key Laboratory of Pure Mathematics and Mathematical Practice, Department of Mathematics, East China Normal University, 3663 Zhongshan North Road, Putuo District, Shanghai 200062, China}
\email{lihui8605@hotmail.com}
\author[D. Yang]{Dilian Yang}
\address{Dilian Yang,
Department of Mathematics $\&$ Statistics, University of Windsor, Windsor, ON
N9B 3P4, CANADA}
\email{dyang@uwindsor.ca}

\thanks{HL was supported by National Natural Science Foundation of China (Grant No.~11801176), by Research Center for Operator Algebras of East China Normal University, and by Science and Technology Commission of Shanghai Municipality (STCSM) grant No.~18dz2271000.}
\thanks{DY was partially supported by an NSERC Discovery Grant 808235.}

\begin{abstract}
Let $(G, \Lambda)$ be a self-similar $k$-graph with a possibly infinite vertex set $\Lambda^0$. We associate a universal C*-algebra $\O_{G,\Lambda}$ to $(G,\Lambda)$. The main purpose of this paper is to investigate the ideal structures of $\mathcal{O}_{G,\Lambda}$. We prove that there exists a one-to-one correspondence between the set of all $G$-hereditary and $G$-saturated subsets of $\Lambda^0$ and the set of all gauge-invariant and diagonal-invariant ideals of
$\O_{G,\Lambda}$. Under some conditions, we characterize all primitive ideas of $\O_{G,\Lambda}$. Moreover, we describe the Jacobson topology of some concrete examples, which includes the C*-algebra of the product of odometers. On the way to our main results, we study self-similar $P$-graph C*-algebras in depth.
\end{abstract}

\subjclass[2010]{46L05}
\keywords{self-similar $k$-graph, C*-algebra, gauge-invariant ideal, primitive ideal}

\maketitle

\section{Introduction}
\label{S:intro}

Let $\Lambda$ be a row-finite source-free $k$-graph with finite vertices, and $G$ be a (countable discrete) group acting on $\Lambda$. If the action is self-similar, then we associated $(G,\Lambda)$ a universal C*-algebra $\O_{G,\Lambda}$ which is called the \textit{self-similar $k$-graph C*-algebra} of $(G,\Lambda)$ (\cite{LY17_2}). Roughly speaking, $\O_{G,\Lambda}$ is generated by a universal pair $\{u,s\}$ of representations, where $u$ is a unitary representation of $G$ in $\O_{G,\Lambda}$ and $s$ is a Cuntz-Krieger representation of  $\Lambda$ in $\O_{G,\Lambda}$ such that $u$ and $s$ are compatible with respect to the underlying self-similar action of $(G,\Lambda)$. It turns out that those C*-algebras are so broad that they include many known important C*-algebras, such as unital $k$-graph C*-algebras introduced by Kumjian-Pask \cite{KP00}, Exel-Pardo algebras \cite{EP17}, unital Katsura algebras \cite{Kat08_1}, and Nekrashevych algebras \cite{Nek09}. In \cite{LY17_2}, we proved that $\mathcal{O}_{G,\Lambda}$ is always nuclear and satisfies the UCT, completely characterized the simplicity of $\O_{G,\Lambda}$, and further showed that if $\mathcal{O}_{G,\Lambda}$ is simple then $\mathcal{O}_{G,\Lambda}$ is either stably finite or purely infinite. Very recently, under some conditions, in \cite{LY18} we identified the KMS state space of $\mathcal{O}_{G,\Lambda}$ with the tracial state space of the C*-algebra of the periodicity group $\Per_{G,\Lambda}$ of $(G,\Lambda)$.

In this paper, as a natural continuation of \cite{LY17_2, LY18}, we study the ideal structures of self-similar $k$-graph C*-algebras.
In Section~\ref{S:Pre}, some necessary backgrounds are given. In Section~\ref{S:ssP}, we introduce the notion of self-similar $P$-graphs and study their C*-algebras in detail. The purpose of studying such far-reaching generalization is twofold. First, since self-similar $P$-graphs are a natural generalization of self-similar $k$-graphs, we take this opportunity to study their associated C*-algebras. Second, more importantly, they play a vital role in Section~\ref{S:primitive} for the study of the primitive ideals of a self-similar $k$-graph C*-algebra, which is one of our main goals here. 
The reason why it is inevitable to involve self-similar $P$-graph theory here is simply because taking quotient is not closed for self-similar $k$-graph C*-algebras, but lies in the realm of self-similar $P$-graph C*-algebras.
When $P=\bN^k$ and $\Lambda^0$ is finite, the definition of self-similar $P$-graph C*-algebras in this paper coincides with the one given in \cite{LY17_2}. We should also mention that, very recently, Exel, Pardo, and Starling in \cite{EPS18} provide a definition for self-similar directed graph C*-algebras by using a different approach. It turns out that our C*-algebras in the case of $P=\bN$ are isomorphic to theirs. 
For self-similar $k$-graph C*-algebras, we generalize a series of structure theorems from the unital case proved in  \cite{LY17_2} to the nonunital case. 
In Section~\ref{S:gdinv}, we prove that for any self-similar $k$-graph there is a one-to-one correspondence between the set of all $G$-hereditary and $G$-saturated subsets of $\Lambda^0$ and the set of its all gauge-invariant and diagonal-invariant ideals of the self-similar $k$-graph C*-algebra. We also find a new phenomenon,
which does not arise in ordinary $k$-graph C*-algebras.
That is, a gauge-invariant ideal of a self-similar $k$-graph C*-algebra is not necessarily diagonal-invariant. However, under some mild conditions, we are able to show such a phenomenon disappears. 
In Section~\ref{S:primitive}, under some conditions, we characterize all primitive ideas of self-similar $k$-graph C*-algebras. Finally, in Section~\ref{S:ex} we apply our result from Section~\ref{S:primitive} to some concrete examples of self-similar $k$-graphs. 
For those examples, we not only identify their primitive ideals, but also clearly describe the Jacobson topology of their primitive ideal spaces. 
Those examples include (1) self-similar $k$-graphs with underlying $k$-graphs strongly aperiodic (cf.~\cite{KP14}), and (2)  the motivating example of our series of research \cite{LY17, LY17_2, LY18} -- the product of odometers, which has connections to many other areas like number theory.

\subsection*{Notation and Conventions}

Throughout the paper, $k$ is fix a positive integer, which is allowed to be $\infty$.

All semigroups are assumed to be unital; and all topological spaces are assumed to be second countable. 
By an ideal of a C*-algebra, we always mean a closed two-sided ideal. A homomorphism between two C*-algebras is always assumed to be a *-homomorphism.

Denote by $\mathbb{N}$ the set of all nonnegative integers.  Denote by $\{e_i\}_{i=1}^{k}$ the standard basis of $\mathbb{Z}^k$. For $t=(t_1,\ldots, t_k)\in\bT^k$ and $n=(n_1,\ldots, n_k)\in \bZ^k$, let $t^n:=\prod_{i=1}^{k}t_i^{n_i}$. The identity of a group $G$ (resp. a semigroup $P$) is usually denoted by $1_G$ (resp. $1_P$), sometimes by $0_G$ (resp. $0_P$) if $G$ (resp. $P$) is abelian. For a cancellative abelian semigroup $P$, $\fG(P)$ denotes its Grothendieck group. For a C*-algebra $A$, $M(A)$ denotes its multiplier algebra.

\section{Preliminaries}
\label{S:Pre}

\subsection{Hilbert algebras}

In this subsection, we introduce some basics about traces from \cite{Dix77} and Hilbert algebras from \cite{Dix81}.

\begin{defn}
Let $A$ be a C*-algebra and let $\tau:A^+ \to [0,\infty]$ be a map. Then $\tau$ is called a \emph{trace} if
\begin{enumeratei}
\item $\tau(a+b)=\tau(a)+\tau(b)$ for all $a,b \in A^+$;
\item $\tau(\lambda a)=\lambda \tau(a)$ for all $a \in A^+,\lambda \geq 0$;
\item $\tau(a^*a)=\tau(aa^*)$ for all $a \in A$.
\end{enumeratei}
Furthermore, a trace $\tau$ is said to be
\begin{enumeratei}
\item \emph{faithful} if for any $a \in A^+,\phi(a)=0 \implies a=0$;
\item \emph{densely defined} if $\{a \in A^+:\tau(a) <\infty\}$ is dense in $A^+$;
\item \emph{lower semicontinuous} if for any $\lambda \in \mathbb{R},\{a \in A^+:\tau(a) >\lambda\}$ is open;
\item \emph{semifinite} if for any $a \in A^+,\tau(a)=\sup\{\tau(b):0 \leq b \leq a, \tau(b)<\infty\}$.
\end{enumeratei}
\end{defn}

\begin{lem}\label{densely def lower semicont trace is semifin}
Let $A$ be a C*-algebra, and $\tau$ be a densely defined, lower semicontinuous trace on $A$. Then $\tau$ is semifinite.
\end{lem}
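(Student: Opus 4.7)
The plan is to reduce the claim to showing that $\tau\bigl((a-\varepsilon)_+\bigr)<\infty$ for every $a\in A^+$ and every $\varepsilon>0$, and then to upgrade this to semifiniteness by letting $\varepsilon\searrow 0$ and using lower semicontinuity together with the norm convergence $(a-\varepsilon)_+\nearrow a$. As a preliminary I would record the routine monotonicity of $\tau$ on $A^+$: if $0\le x\le y$, then $y-x\in A^+$ and additivity forces $\tau(y)=\tau(x)+\tau(y-x)$, giving $\tau(x)\le\tau(y)$. In particular the domain of finiteness $M_\tau:=\{x\in A^+:\tau(x)<\infty\}$ is hereditary in $A^+$.

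The technical heart is the estimate $\tau\bigl((a-\varepsilon)_+\bigr)<\infty$. By the density hypothesis, choose $c\in M_\tau$ with $\|c-a\|<\varepsilon$. A standard approximation lemma of R\o rdam then produces a contraction $d\in A$ with $(a-\varepsilon)_+=d^{*}cd$. Using the trace identity $\tau(xx^{*})=\tau(x^{*}x)$ together with the monotonicity just recorded,
\[
\tau\bigl((a-\varepsilon)_+\bigr)=\tau(d^{*}cd)=\tau\bigl(c^{1/2}dd^{*}c^{1/2}\bigr)\le\|d\|^{2}\tau(c)\le\tau(c)<\infty,
\]
where the first inequality uses $c^{1/2}dd^{*}c^{1/2}\le\|d\|^{2}c$, applied in the unitization if necessary.

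Finally, letting $\varepsilon\searrow 0$ completes the proof. Lower semicontinuity of $\tau$ and the norm convergence $(a-\varepsilon)_+\to a$ yield $\tau(a)\le\liminf_{\varepsilon\to 0^{+}}\tau\bigl((a-\varepsilon)_+\bigr)$, while monotonicity supplies the reverse inequality $\tau\bigl((a-\varepsilon)_+\bigr)\le\tau(a)$; hence $\tau(a)=\sup_{\varepsilon>0}\tau\bigl((a-\varepsilon)_+\bigr)$. Since each $(a-\varepsilon)_+$ lies in the set $\{b\in A^+:0\le b\le a,\ \tau(b)<\infty\}$, this yields $\tau(a)\le\sup\{\tau(b):0\le b\le a,\ \tau(b)<\infty\}$, and the opposite inequality is immediate from monotonicity. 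The only genuine obstacle is arranging the R\o rdam-type factorization $(a-\varepsilon)_+=d^{*}cd$ used in the middle step; if one prefers to avoid invoking this lemma, it can be bypassed by a direct continuous functional calculus argument dominating $(a-\varepsilon)_+$ by a scalar multiple of an explicit element of $M_\tau$ built from $c$, at the price of a slightly heavier computation.
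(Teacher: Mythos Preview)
Your argument is correct. The paper does not actually prove this lemma---it is listed among the preliminaries drawn from Dixmier's book \cite{Dix77}---so there is no proof in the paper to compare against. The R\o rdam-type factorization you invoke, producing $d\in A$ with $(a-\varepsilon)_+=d^{*}cd$ whenever $a,c\in A^+$ satisfy $\|a-c\|<\varepsilon$, is standard; the usual references give $d$ as a contraction, but even a merely bounded $d$ suffices for your estimate $\tau(d^{*}cd)=\tau(c^{1/2}dd^{*}c^{1/2})\le\|d\|^{2}\tau(c)<\infty$. The passage to the limit via lower semicontinuity and the norm convergence $(a-\varepsilon)_+\to a$ is exactly right, and the final identification of $\tau(a)$ with the supremum over $\tau$-finite minorants is immediate.
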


\begin{defn}
Let $\mathcal{A}$ be a $*$-algebra endowed with an inner product $\langle\cdot,\cdot\rangle$. Then $\mathcal{A}$ is called a \emph{Hilbert algebra} if
\begin{enumeratei}
\item $\langle x,y\rangle=\langle y^*,x^*\rangle$ for all $x,y \in \mathcal{A}$;
\item $\langle xy,z\rangle=\langle y,x^*z\rangle$ for all $x,y,z \in \mathcal{A}$;
\item for any $x \in \mathcal{A}$, the map $U_x:\mathcal{A} \to \mathcal{A}$ by $U_x(y)=xy$ is continuous;
\item $\spn\mathcal{A}^2$ is dense in $\mathcal{A}$.
\end{enumeratei}
Denote by $\mathcal{H}$ the completion of $\mathcal{A}$ as an inner product space. Then $\mathcal{A}$ embeds in $B(\mathcal{H})$ by $x \mapsto U_x$ and $\mathcal{A}''$ is called the (left) von Neumann algebra associated with $\mathcal{A}$.
\end{defn}

\begin{thm}\label{exist a semifinite trace}
Let $\mathcal{A}$ be a Hilbert algebra. Then there exists a faithful semifinite ultraweakly lower semicontinuous trace on $\mathcal{A}''$ which is finite on $a^*a$ for all $a \in \mathcal{A}$.
\end{thm}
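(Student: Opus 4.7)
The plan is to follow the classical Godement--Dixmier construction of the canonical semifinite trace on the left von Neumann algebra of a Hilbert algebra. The starting point is to identify a dense two-sided $*$-ideal $\fm\subseteq\A''$ on which the trace can be defined by an explicit vector formula, and then to extend to all positive elements by outer regularity.

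First I would develop the theory of \emph{left-bounded vectors}: call $\xi\in\H$ left-bounded if the map $a\mapsto V_a\xi$, where $V_a\in\B(\H)$ is the (bounded) right-multiplication operator on $\H$ associated with $a\in\A$, extends to a bounded operator $U_\xi$ on $\H$. Axiom (ii) forces $U_\xi$ to commute with every $V_b$ and hence to lie in $\A''$, while axiom (iv) guarantees that $\fn:=\{U_\xi:\xi\text{ left-bounded}\}$ is ultraweakly dense in $\A''$. A routine check shows that $\fn$ is a left ideal of $\A''$, closed under the adjoint operation $U_\xi\mapsto (U_\xi)^*=U_{\xi^\#}$, where $\xi\mapsto\xi^\#$ denotes the isometric antilinear extension of the involution from $\A$ to $\H$ (isometric thanks to axiom (i)). Consequently $\fm:=\fn^*\fn$ is a dense two-sided $*$-ideal of $\A''$. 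Define $\tau$ on $\fm^+$ by $\tau(U_\xi^*U_\xi):=\|\xi\|^2$, extend linearly to $\fm$, and then extend to $(\A'')^+$ by $\tau(T):=\sup\{\tau(S):S\in\fm^+,\;0\le S\le T\}\in[0,\infty]$.

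The verification then has several strands: (a) well-definedness and additivity on $\fm^+$, which use the standard polarization identity together with axiom (ii) to express $\tau$ via inner products of left-bounded vectors; (b) the tracial identity $\tau(U_\xi U_\xi^*)=\tau(U_\xi^*U_\xi)$, which reduces to proving $\|\xi\|=\|\xi^\#\|$ for every left-bounded $\xi$; (c) ultraweak lower semicontinuity of the outer extension, which follows because $\tau$ on $\fm^+$ is a supremum of vector functionals of the form $T\mapsto\langle T\eta,\eta\rangle$, and the outer supremum preserves lower semicontinuity; (d) finiteness on $U_a^*U_a$ with $\tau(U_a^*U_a)=\|a\|^2$ for all $a\in\A$, which simultaneously shows that $\tau$ is densely defined; and (e) faithfulness, which uses the density of $\A\subseteq\H$ together with the observation that $U_\xi$ vanishing on $\A$ forces $\xi=0$.

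The main obstacle will be (b). Showing $\|\xi\|=\|\xi^\#\|$ for an arbitrary left-bounded $\xi$ is easy on the dense subset $\A$ by axiom (i), but extending this to all left-bounded vectors requires establishing first that $\xi^\#$ is itself left-bounded, which in turn rests on a symmetric development of right-bounded vectors and the commutation theorem relating $\A''$ with its commutant $\A'$. Once (a)--(e) are in hand, semifiniteness is \emph{not} an extra burden: being densely defined and lower semicontinuous, $\tau$ is automatically semifinite by Lemma~\ref{densely def lower semicont trace is semifin}, completing the proof.
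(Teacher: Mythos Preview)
The paper does not give its own proof of this statement: it is quoted in the preliminaries (Subsection~2.1) from Dixmier's monograph on von Neumann algebras \cite{Dix81}, so there is no ``paper's proof'' to compare against. Your outline is precisely the classical Godement--Dixmier construction of the natural trace on the left von Neumann algebra of a Hilbert algebra, and the strands (a)--(e) you list are exactly the verifications carried out in Dixmier's Chapter~I, \S6. In that sense your proposal is correct and coincides with the source the paper cites.

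One small caution about your closing sentence. You invoke Lemma~\ref{densely def lower semicont trace is semifin} to deduce semifiniteness from ``densely defined and lower semicontinuous,'' but that lemma is formulated for C*-algebras with the norm topology, whereas your step (d) only yields that $\fm$ is \emph{ultraweakly} dense in $\A''$ (the $U_a^*U_a$ for $a\in\A$ need not be norm dense in $(\A'')^+$). The fix is routine and is how Dixmier actually argues: since $\fm$ is an ultraweakly dense two-sided ideal and $\tau$ is normal, for any $T\in(\A'')^+$ one can find an increasing net of projections $(e_i)$ in $\fm$ converging ultraweakly to $1$, and then $e_iTe_i\in\fm^+$ increases to $T$ with $\tau(e_iTe_i)<\infty$, giving semifiniteness directly. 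So the appeal to Lemma~\ref{densely def lower semicont trace is semifin} should be replaced by this ultraweak argument, but otherwise your plan is sound.
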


\subsection{Cuntz-Pimsner algebras}

In this subsection we briefly recall the definition of Cuntz-Pimsner algebras of product systems over cancellative abelian semigroups from \cite{Fow02}.

\begin{defn}
Let $P$ be a cancellative abelian semigroup, $A$ be a C*-algebra, and $X_p$ be a nondegenerate C*-correspondence over $A$ such that $\phi_p(A) \subseteq \mathcal{K}(X_p)$ for all $p \in P$. Denote by $X:=\bigsqcup_{p \in P}X_p$. Then $X$ is called a \emph{product system} over $P$ with coefficient $A$ if
\begin{enumeratei}
\item $X$ is a semigroup;
\item $X_{0_P}=A$;
\item $X_p \cdot X_q \subseteq X_{p+q}$ for all $p,q \in P$;
\item for $p,q \in P \setminus \{0_P\}$, there exists an isomorphism from $X_p \otimes_A X_q$ onto $X_{p+q}$ by sending $x\otimes y$ to $xy$ for all $x \in X_p$ and $y \in X_q$;
\item for $p \in P$, the multiplication $X_{0_P} \cdot X_p$ is implemented by the left action of $A$ on $X_p$, and the multiplication $X_p \cdot X_{0_P}$ is implemented by the right action of $A$ on $X_p$.
\end{enumeratei}
\end{defn}

\begin{defn}
\label{D:Toep}
Let $P$ be a cancellative abelian semigroup, $A$ and $B$ be C*-algebras, $X$ be a product system over $P$ with coefficient $A$, and $\psi:X \to B$ be a map.
For $p \in P$, denote by $\psi_p:=\psi \vert_{X_p}$. Then $\psi$ is called a \emph{representation} of $X$ if
\begin{enumeratei}
\item $(\psi_{p},\psi_0)$ is a Toeplitz representation of $X_p$ for all $p \in P$; and
\item $\psi_p(x)\psi_q(y)=\psi_{p+q}(xy)$ for all $p,q \in P,\, x \in X_p,\, y \in X_q$.
\end{enumeratei}
For $p \in P$, denote by $\psi_p^{(1)}:\mathcal{K}(X_p) \to B$ the homomorphism such that $\psi_p^{(1)}(\Theta_{x,y})=\psi_p(x)\psi_p(y)^*$ for all $x,\, y \in X_p$. The representation $\psi$ is said to be
\emph{Cuntz-Pimsner covariant} if $\psi_{0_P}(a)=\psi_p^{(1)}(\phi_p(a))$ for all $p \in P$ and $a \in A$. The \emph{Cuntz-Pimsner algebra} $\mathcal{O}_X$ of $X$ is generated by a Cuntz-Pimsner covariant representation $j_X$ such that for any Cuntz-Pimsner covariant representation $\psi$ of $X$ into a C*-algebra $B$, there is a homomorphism $h: \O_X\to B$ such that $h \circ j_X=\psi$.
\end{defn}

The following result is a combination of the gauge-invariant uniqueness theorem \cite[Corollary~4.14]{CLSV11} and the observation \cite[Proposition~2.8]{LY17_2}.

\begin{thm}\label{T:GIUCLSV}
Let $X$ be a product system over $\mathbb{N}^k$ with coefficient $A$, and $\psi$ be a Cuntz-Pimsner covariant representation of $X$. Denote by
$h:\mathcal{O}_X \to C^*(\psi(X))$ the homomorphism induced from the universal property of $\mathcal{O}_X$. Suppose that there exists a strongly continuous homomorphism $\alpha:\mathbb{T}^k \to \mathrm{Aut}(C^*(\psi(X)))$ such that $\alpha_t(\psi_p(x))=t^p \psi_p(x)$ for all $t \in \mathbb{T}^k,\, p \in \mathbb{N}^k,\, x \in X_p$. Moreover, suppose that $\psi_{0_P}$ is injective. Then $h$ is an isomorphism.
\end{thm}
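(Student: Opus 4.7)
The plan is to deduce the result from the gauge-invariant uniqueness theorem of Carlsen--Larsen--Sims--Vittadello \cite[Corollary~4.14]{CLSV11}, which is formulated for Cuntz--Nica--Pimsner algebras of compactly aligned product systems over quasi-lattice ordered groups $(G,P)$. The pair $(\bZ^k,\bN^k)$ is quasi-lattice (in fact lattice) ordered with least upper bound $p\vee q$ given coordinatewise, and the standing assumption $\phi_p(A)\subseteq \K(X_p)$ for every $p\in\bN^k$ forces $X$ to be compactly aligned: for $S\in\K(X_p)$ and $T\in\K(X_q)$, the product $\iota_p^{p\vee q}(S)\,\iota_q^{p\vee q}(T)$ lies in $\K(X_{p\vee q})$ because, under the factorization $X_{p\vee q}\cong X_p\otimes_A X_{p\vee q-p}$, the left action of $A$ on the right factor lands in the compacts.

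The first step is to invoke \cite[Proposition~2.8]{LY17_2}, which in this setting identifies Fowler's Cuntz--Pimsner algebra $\O_X$ from Definition~\ref{D:Toep} with the Cuntz--Nica--Pimsner algebra $\mathcal{NO}_X$ of \cite{CLSV11}, and shows that a representation $\psi$ of $X$ which is Cuntz--Pimsner covariant in the sense of Definition~\ref{D:Toep} is automatically Nica covariant as well as Cuntz--Nica--Pimsner covariant. Under this identification, the universal homomorphism $h:\O_X\to C^*(\psi(X))$ is precisely the one obtained from the universal property of $\mathcal{NO}_X$ within the CLSV framework.

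It then remains to check the hypotheses of \cite[Corollary~4.14]{CLSV11}: namely, that $\psi_{0_P}$ is injective (given), and that $C^*(\psi(X))$ carries a coaction of $\bZ^k$ compatible with the natural grading. The latter is equivalent, for $G=\bZ^k$ compact-dual, to the existence of a strongly continuous gauge action of $\bT^k$ scaling each $\psi_p(x)$ by $t^p$, which is exactly the homomorphism $\alpha$ assumed in the statement. Corollary~4.14 then yields injectivity of $h$, and since the image of $h$ manifestly contains every generator $\psi_p(x)$ and hence all of $C^*(\psi(X))$, we conclude that $h$ is an isomorphism.

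The main obstacle is not any genuine analytic difficulty but rather the dictionary between Fowler's Cuntz--Pimsner covariance and the more elaborate Cuntz--Nica--Pimsner covariance of \cite{CLSV11}; once this is settled by \cite[Proposition~2.8]{LY17_2} in the regime $P=\bN^k$ with $\phi_p(A)\subseteq\K(X_p)$, the remainder of the argument is a direct application of the cited gauge-invariant uniqueness theorem.
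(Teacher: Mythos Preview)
Your proposal is correct and follows precisely the route indicated by the paper itself, which does not give a standalone proof but simply records the theorem as ``a combination of the gauge-invariant uniqueness theorem \cite[Corollary~4.14]{CLSV11} and the observation \cite[Proposition~2.8]{LY17_2}.'' You have accurately unpacked what that combination entails: using \cite[Proposition~2.8]{LY17_2} to pass from Fowler's framework to the Cuntz--Nica--Pimsner setting (including compact alignment and the equivalence of covariance notions when $\phi_p(A)\subseteq\K(X_p)$), and then verifying the hypotheses of \cite[Corollary~4.14]{CLSV11} via the given gauge action and injectivity of $\psi_{0_P}$.
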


\subsection{Groupoid C*-algebras}

In this subsection we recap the background of groupoid C*-algebras from \cite{Ren80}.

A \emph{groupoid} is a small category with inverses. A \emph{topological groupoid} is a topological space and a groupoid such that the product and inverse maps are both continuous. Let $\Gamma$ be a topological groupoid. A subset $A \subseteq \Gamma$ is called a \emph{bisection} if the restrictions $r \vert_A$ and $s \vert_A$ are both homeomorphisms onto their images. An \emph{ample groupoid} is a Hausdorff topological groupoid with an open base consisting of compact open bisections.

Let $\Gamma$ be an ample groupoid. For $u \in \Gamma^0$, define $\Gamma^u:=r^{-1}(u)$, define $\Gamma_u:=s^{-1}(u)$, and define the \emph{isotropy group} at $u$ by $\Gamma_u^u:=r^{-1}(u) \cap s^{-1}(u)$. Then $\Gamma$ is said to be \emph{topologically principal} if the set of units whose isotropy groups are trivial is dense in $\Gamma^{0}$. Moreover, a subset $E \subseteq \Gamma^{0}$ is said to be \emph{invariant} if $r(s^{-1}(E)) \subseteq E$. Then $\Gamma$ is said to be \emph{minimal} if the only open invariant subsets of $\Gamma^0$ are $\mt$ and $\Gamma^{0}$.

Let $\Gamma$ be an ample groupoid. For $f,g \in C_c(\Gamma)$, define
\[
\Vert f \Vert_I:=\max\Big\{\sup_{u \in \Gamma^{0}}\sum_{\gamma \in r^{-1}(u)}\vert f(\gamma) \vert, \sup_{u \in \Gamma^0}\sum_{\gamma \in s^{-1}(u)}\vert f(\gamma) \vert \Big\};
\]
\[
(f* g) (\gamma):=\sum_{s(\beta)=r(\gamma)}f(\beta^{-1})g(\beta\gamma); \text{ and } f^*(\gamma):=\overline{f(\gamma^{-1})}.
\]
Then $\Vert\cdot\Vert_I$ is a norm, which is called the \emph{$I$-norm}, and $C_c(\Gamma)$ is a $*$-algebra. A $*$-representation $\pi$ of $C_c(\Gamma)$ on a Hilbert space is said to be \emph{bounded} if $\Vert \pi(f) \Vert \leq \Vert f \Vert_I$ for all $f \in C_c(\Gamma)$. For $u \in \Gamma^0$, define $L^u:C_c(\Gamma) \to B(\ell^2(\Gamma_u))$ by
\[
L^u(f)(\delta_{\gamma}):=\sum_{s(\beta)=r(\gamma)}f(\beta)\delta_{\beta\gamma} \text{ for all } f \in C_c(\Gamma)\text{ and }\gamma \in \Gamma.
\]
$L^u$ is a bounded $*$-representation called the \emph{left regular representation} at $u$. Define the \emph{left regular representation} $L:=\bigoplus_{u \in \Gamma^0}L^u$. For $f \in C_c(\Gamma)$, define $\Vert f \Vert_r:=\Vert L(f)\Vert$. Then $\Vert\cdot\Vert_r$ is a C*-norm on $C_c(\Gamma)$. The completion of $C_c(\Gamma)$ under the $\Vert\cdot\Vert_r$-norm is called the \emph{reduced groupoid C*-algebra} of $\Gamma$, which is denoted by $C_r^*(\Gamma)$. Define $\Vert f \Vert:=\sup_\pi \Vert \pi(f) \Vert$, where $\pi$ runs through all bounded $*$-representations of $C_c(\Gamma)$. Then $\Vert\cdot\Vert$ is a C*-norm on $C_c(\Gamma)$, and the completion of $C_c(\Gamma)$ under the $\Vert\cdot\Vert$-norm is called the \emph{full groupoid C*-algebra}, denoted as $\ca(\Gamma)$.

\begin{thm}[{\cite[Theorem~4.4]{Exe11}}]\label{CK thm gpoid}
Let $\Gamma$ be a topologically principal ample groupoid, and $I$ be a nonzero ideal of $\ca_r(\Gamma)$. Then $C_0(\Gamma^0) \cap I$ is nonzero.
\end{thm}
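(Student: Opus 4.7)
The plan is to argue by contradiction, using the canonical faithful conditional expectation $E\colon \ca_r(\Gamma) \to C_0(\Gamma^0)$ obtained by continuously extending the restriction $f \mapsto f\vert_{\Gamma^0}$ from $C_c(\Gamma)$ to $C_c(\Gamma^0)$. This map is $C_0(\Gamma^0)$-bimodular and faithful on positive elements; faithfulness comes from the identity $E(x^*x)(u) = \|L^u(x)\delta_u\|^2$, which forces $L^u(x)\delta_u = 0$ for all $u \in \Gamma^0$ and hence $x=0$.

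Suppose for contradiction that $0 \ne I$ yet $I \cap C_0(\Gamma^0) = 0$. The quotient map $\pi\colon \ca_r(\Gamma) \to \ca_r(\Gamma)/I$ then restricts to an injective, hence isometric, $*$-homomorphism on $C_0(\Gamma^0)$. Pick a positive nonzero $a \in I$; by faithfulness of $E$ we have $\|E(a)\| > 0$. Fix $\ep > 0$ with $\ep < \|E(a)\|/3$, approximate $a$ by a positive $f = f^* \in C_c(\Gamma)$ with $\|a - f\| < \ep$, and set $g := E(f) = f\vert_{\Gamma^0} \in C_c(\Gamma^0)$; then $\|g\| > \|E(a)\| - \ep$. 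Using topological principality, select $u_0 \in \Gamma^0$ with trivial isotropy $\Gamma_{u_0}^{u_0} = \{u_0\}$ and $|g(u_0)| > \|g\| - \ep$, which is possible because the set $\{u : |g(u)| > \|g\| - \ep\}$ is open and nonempty and the units with trivial isotropy are dense.

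The core of the proof is a geometric cutoff that projects $f$ onto the diagonal near $u_0$. Let $K := \operatorname{supp}(f) \setminus \Gamma^0$, a compact subset of $\Gamma$. Triviality of the isotropy at $u_0$ forces $(u_0, u_0) \notin (r,s)(K)$; since $(r,s)(K)$ is compact in the Hausdorff space $\Gamma^0 \times \Gamma^0$, there is an open neighborhood $U$ of $u_0$ with $(U \times U) \cap (r,s)(K) = \mt$. Choose $h \in C_c(\Gamma^0)$ with $0 \le h \le 1$, $h(u_0) = 1$, and $\operatorname{supp}(h) \subseteq U$. For any $\gamma \in K$, at least one of $r(\gamma), s(\gamma)$ lies outside $U$, so $h(r(\gamma))h(s(\gamma)) = 0$; consequently $hfh$ is supported in $\Gamma^0$, belongs to $C_c(\Gamma^0)$, and satisfies $(hfh)(u_0) = g(u_0)$, giving $\|hfh\| > \|E(a)\| - 2\ep$.

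To close the argument, observe $hah \in I$ and $\|hah - hfh\| \le \|a - f\| < \ep$, so $\pi(hah) = 0$ yields $\|\pi(hfh)\| \le \|hfh - hah\| < \ep$. Isometry of $\pi$ on $C_0(\Gamma^0)$ then forces $\|hfh\| < \ep$, contradicting $\|hfh\| > \|E(a)\| - 2\ep > \ep$. The main obstacle is the cutoff construction in the third paragraph, which leans crucially on both Hausdorffness of $\Gamma$ (to separate $(u_0, u_0)$ from the compact set $(r,s)(K)$) and topological principality (to choose $u_0$ with trivial isotropy near where $|g|$ is largest); once this geometric lemma is in hand, the functional-analytic conclusion via isometry on the diagonal is immediate.
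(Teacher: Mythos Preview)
The paper does not give its own proof of this statement; it is quoted verbatim as a background result from \cite[Theorem~4.4]{Exe11}, so there is nothing in the paper to compare against. Your argument is correct and is essentially the standard proof one finds in Exel's paper and in Renault's earlier work: exploit the faithful conditional expectation $E\colon \ca_r(\Gamma)\to C_0(\Gamma^0)$, approximate a positive element of $I$ by $f\in C_c(\Gamma)$, use topological principality to pick a unit $u_0$ with trivial isotropy near a peak of $E(f)$, and then use a bump function $h$ supported near $u_0$ to force $hfh\in C_c(\Gamma^0)$, contradicting the assumed isometry of the quotient map on $C_0(\Gamma^0)$.

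Two minor remarks. First, you write ``a positive $f=f^*$'': what you actually obtain (and all you need) is a self-adjoint $f$; positivity of the $C_c$-approximant is neither automatic nor used. Second, your one-line justification of faithfulness of $E$ is slightly elliptical: from $E(x^*x)=0$ you get $L^u(x)\delta_u=0$ for every $u$, and to conclude $L^u(x)\delta_\gamma=0$ for all $\gamma\in\Gamma_u$ one uses the right-translation unitary $R_\gamma\colon \ell^2(\Gamma_{r(\gamma)})\to\ell^2(\Gamma_u)$, which intertwines $L^{r(\gamma)}$ and $L^u$ and sends $\delta_{r(\gamma)}$ to $\delta_\gamma$. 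With that understood, the proof is complete.
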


\begin{defn}[{\cite[Definitions~5.1, 5.4]{RS17}}]
Let $\Gamma$ be an ample groupoid. Define an equivalence $\sim_\Gamma$ on $C_c(\Gamma^0,\mathbb{N})$ as follows: for $f,g \in C_c(\Gamma^0,\mathbb{N})$, define $f \sim_\Gamma g$ if there exist compact open bisections $E_1,\dots,E_n$ of $\Gamma$ such that $f=\sum_{i=1}^{n}\mathds{1}_{s(E_i)}\text{ and } g=\sum_{i=1}^{n}\mathds{1}_{r(E_i)}$. Define an abelian semigroup by $S(\Gamma):=C_c(\Gamma^0,\mathbb{N})/\!\!\sim_\Gamma$ which is called the \emph{type semigroup} of $\Gamma$. The equivalence class of $f \in C_c(\Gamma^0,\mathbb{N})$ in $S(\Gamma)$ is written as $[f]$. Moreover, define a preorder on $S(\Gamma)$ as follows: for any $s,t \in S(\Gamma)$, define $s \leq t$ if there exists $r \in S(\Gamma)$ such that $s+r=t$. Furthermore, $S(\Gamma)$ is said to be \emph{almost unperforated} if for any $s,t \in S(\Gamma)$, any $n,m \geq 0$, we have $ns \leq mt$ and  $n >m \implies s \leq t$.
\end{defn}

\begin{thm}[{\cite[Theorem~5.1]{BCFS14}, \cite[Corollary~6.6 and Theorem~7.4]{RS17}}]
\label{T:RS}
Let $\Gamma$ be an ample groupoid such that $\Gamma^0$ is not compact and $C^*(\Gamma)$ is simple. Then $\ca(\Gamma)$ is stably finite if and only if there exists a faithful semifinite trace $\tau$ on $\ca(\Gamma)$ such that $0<\tau(\mathds{1}_K)<\infty$ for all compact open subset $K \subseteq \Gamma^0$. Furthermore, suppose that $S(\Gamma)$ is almost unperforated. Then $\ca(\Gamma)$ is either stably finite or purely infinite.
\end{thm}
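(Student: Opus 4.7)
The plan is to prove the two assertions separately, reducing everything to a construction of a suitable state on the type semigroup $S(\Gamma)$.

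For the ``trace $\Rightarrow$ stably finite'' direction, the argument is standard: such a $\tau$ is automatically semifinite by Lemma~\ref{densely def lower semicont trace is semifin}, and if a projection $p \in M_n(\ca(\Gamma))$ were Murray--von Neumann equivalent to a proper subprojection, then cutting down by a suitable $\mathds{1}_K$ (whose trace is finite by hypothesis) would produce a nonzero positive element annihilated by the trace, contradicting faithfulness.

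For the converse, the plan is first to produce a $\Gamma$-invariant Radon measure $\mu$ on $\Gamma^0$ with $0 < \mu(K) < \infty$ for every nonempty compact open $K$, and then to promote it to $\tau$. The measure is extracted from a state $\sigma$ on $S(\Gamma)$ satisfying $\sigma([\mathds{1}_K]) \in (0,\infty)$ for each such $K$: stable finiteness rules out the paradoxical relation $2[\mathds{1}_K] \leq [\mathds{1}_K]$ in $S(\Gamma)$ (otherwise one would manufacture an infinite projection in $\ca(\Gamma)$), and a Hahn--Banach / Tarski-type argument then yields a monotone state $\sigma$ with $\sigma([\mathds{1}_K]) > 0$ at one chosen compact open; minimality of $\Gamma$, which follows from simplicity of $\ca(\Gamma)$, propagates the positivity of $\sigma$ to every nonempty compact open. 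With $\mu$ in hand, endow $C_c(\Gamma)$ with the inner product $\langle f, g \rangle = \int_{\Gamma^0}(g^* * f)|_{\Gamma^0} \, d\mu$ to make it a Hilbert algebra; Theorem~\ref{exist a semifinite trace} then provides a faithful semifinite trace on the enveloping von Neumann algebra, and a check that it restricts to $\ca(\Gamma)$ with the required finiteness on each $\mathds{1}_K$ finishes the proof.

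For the dichotomy, the plan is to invoke the pure-infiniteness characterization \cite[Theorem~5.1]{BCFS14}, which (for simple $\ca(\Gamma)$) states that $\ca(\Gamma)$ is purely infinite iff $2[\mathds{1}_K] \leq [\mathds{1}_K]$ in $S(\Gamma)$ for every nonempty compact open $K$. If $\ca(\Gamma)$ is not purely infinite, fix $K$ violating this. The almost-unperforation hypothesis is then used in the essential step: it allows a Goodearl-style extension to produce a nonzero, monotone, $\mathbb{R}_{\geq 0}$-valued state on $S(\Gamma)$ from the single datum $2[\mathds{1}_K] \not\leq [\mathds{1}_K]$. As before, minimality spreads positivity to every compact open, yielding an invariant measure and hence, by the first part, stable finiteness. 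The main obstacle in both directions is this state-construction step: translating a C*-algebraic finiteness condition into combinatorial non-paradoxicality in $S(\Gamma)$ and then running a Hahn--Banach extension. In the dichotomy this is where almost unperforation enters in a non-cosmetic way, since without it non-paradoxicality at a single element is insufficient to guarantee a finite-valued monotone state.
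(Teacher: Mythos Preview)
The paper does not prove this theorem. It is stated as a tool imported from the literature, with the attribution \cite[Theorem~5.1]{BCFS14} and \cite[Corollary~6.6 and Theorem~7.4]{RS17}, and no proof is given in the paper itself; it is simply invoked later in the proof of Theorem~\ref{T:nonunital}. So there is nothing to compare your proposal against here.

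That said, your outline is broadly in the spirit of the cited sources, and the overall architecture (non-paradoxicality in $S(\Gamma)$ $\Rightarrow$ state on $S(\Gamma)$ $\Rightarrow$ invariant measure $\Rightarrow$ Hilbert-algebra trace) is the right one. A couple of points are worth tightening. First, in the ``stably finite $\Rightarrow$ trace'' direction you should be explicit that the relevant non-paradoxicality input is $(n+1)[\mathds{1}_K]\not\le n[\mathds{1}_K]$ for all $n$, not merely $2[\mathds{1}_K]\not\le[\mathds{1}_K]$; this is what a Goodearl--Handelman/Tarski argument actually needs to produce a finite state, and it is exactly here that stable finiteness (as opposed to mere finiteness) is used. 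Second, in the dichotomy you invoke almost unperforation to pass from a single failure of $2[\mathds{1}_K]\le[\mathds{1}_K]$ to a state, but strictly speaking almost unperforation is used to upgrade the failure of pure infiniteness to the stronger statement $(n+1)[\mathds{1}_K]\not\le n[\mathds{1}_K]$ for all $n$ (equivalently, no element of $S(\Gamma)$ is properly infinite), after which the same state-extension machinery applies. Your sketch conflates these two steps slightly.
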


\subsection{$P$-graph C*-algebras}

This subsection provides a brief introduction for $P$-graph C*-algebras. The main sources are \cite{CKSS14, Yan15}.


\begin{defn}
Let $P$ be a cancellative abelian semigroup. A countable small category $\Lambda$ is called a \emph{$P$-graph} if there exists a functor $d:\Lambda \to P$ satisfying that for $\mu\in\Lambda, p,q \in P$ with $d(\mu)=p+q$, there exist unique $\alpha,\beta\in \Lambda$ such that $\mu=\alpha\beta$ with $d(\alpha)=p, d(\beta)=q$.

An $\bN^k$-graph is also known as a \emph{$k$-graph}.
\end{defn}

Let $\Lambda$ be a $P$-graph.  For $A,B \subseteq \Lambda$, denote by $AB:=\{\mu\nu:\mu \in A,\nu\in B,s(\mu)=r(\nu)\}$, and  for $p \in P$, denote by $\Lambda^p:=d^{-1}(p)$.

\begin{defn}
Let $\Lambda$ be a $P$-graph. Then $\Lambda$ is said to be \emph{row-finite} if $\vert v\Lambda^{p}\vert<\infty$ for all $v \in \Lambda^0$ and $p \in P$. $\Lambda$ is said to be \emph{source-free} if $v\Lambda^{p} \neq \mt$ for all $v \in \Lambda^0$ and $p \in P$.
\end{defn}

\begin{defn}
Let $P$ be a countable cancellative abelian semigroup. Define $\Omega_P:=\{(p,q) \in P \times P:q-p \in P\}$, define $\Omega_P^0:=\{(p,p):p \in P\}$, for $(p,q), (q,m) \in \Omega_P$, define
$r(p,q):=(p,p)$, $s(p,q):=(q,q)$, $(p,q)(q,m):=(p,m),d(p,q):=q-p$. Then $\Omega_P$ is a row-finite and source-free $P$-graph. Let $\Lambda$ be a $P$-graph. An \emph{infinite path} of $\Lambda$ is a functor from $\Omega_P$ to $\Lambda$. The set of all infinite paths of $\Lambda$ is denoted by $\Lambda^\infty$.
\end{defn}

\subsection*{Standing assumptions} \textsf{Throughout the rest of this paper, whenever we deal with $P$-graphs for some cancellative abelian semigroup $P$, $P$ and its Grothendieck group $\fG(P)$ are always considered to be discrete. Moreover, all $P$-graphs are assumed to be row-finite and source-free.}

\begin{defn}[{\cite[Definition~3.5]{PRS08}}]
Let $\Lambda$ be a $k$-graph. A function $\ft:\Lambda^0 \to [0,\infty)$ is called a \emph{graph trace} if $\ft(v)=\sum_{\mu \in v \Lambda^p}\ft(s(\mu))$ for all $v \in \Lambda^0$ and $p \in \mathbb{N}^k$. A graph trace $\ft$ is said to be \emph{faithful} if $\ft(v) \neq 0$ for all $v \in \Lambda^0$.
\end{defn}

\begin{defn}[{\cite[Definition~4.1]{CKSS14}}]
\label{D:MT}
Let $\Lambda$ be a $k$-graph. A nonempty subset $T$ of $\Lambda^0$ is called a \emph{maximal tail of $\Lambda$} if
\begin{enumeratei}
\item for any $v \in T,w \in \Lambda^0$, we have $w \Lambda v \neq \mt \implies w \in T$;
\item for any $v \in T, p \in \mathbb{N}^k$, we have $v \Lambda^p T \neq \mt$;
\item for any $v_1,v_2 \in T$, there exists $w \in T$ such that both $v_1 \Lambda w$ and $v_2 \Lambda w$ are non-empty.
\end{enumeratei}
\end{defn}

\begin{defn}
Let $\Lambda$ be a $P$-graph. Then the \emph{$P$-graph C*-algebra} $\O_\Lambda$ is defined to be the universal C*-algebra generated by a family of partial isometries $\{s_\lambda:\lambda\in\Lambda\}$ (known as the universal \emph{Cuntz-Krieger $\Lambda$-family}) satisfying
\begin{enumeratei}
\item $\{s_v\}_{v \in \Lambda^0}$ is a family of mutually orthogonal projections;
\item $s_{\mu\nu}=s_{\mu} s_{\nu}$ if $s(\mu)=r(\nu)$;
\item $s_{\mu}^* s_{\mu}=s_{s(\mu)}$ for all $\mu \in \Lambda$; and
\item $s_v=\sum_{\mu \in v \Lambda^{p}}s_\mu s_\mu^*$ for all $v \in \Lambda^0$ and $p \in P$.
\end{enumeratei}
Moreover, the subalgebra $\D_\Lambda:=\ol{\spn}\{s_\mu s_\mu^*:\mu\in\Lambda\}$ is called the \emph{diagonal of $\O_\Lambda$}.
\end{defn}


\section{Self-similar $P$-graph C*-algebras}
\label{S:ssP}

In this section, we introduce the notion of self-similar $P$-graphs, construct the corresponding self-similar $P$-graph C*-algebras, and prove some structure theorems for those C*-algebras.

\subsection{Self-similar $P$-graphs}

In \cite{LY17_2}, we provided the notion of self-similar $k$-graphs. In this subsection, we generalize it to self-similar $P$-graphs.

\begin{defn}
\label{D:ssP}
Let $\Lambda$ be a $P$-graph, $G$ be a (countable discrete) group acting on $\Lambda$, and $\vert:G\times \Lambda \to G$ be a map. Then $(G,\Lambda)$ is called a \emph{self-similar $P$-graph} if the following properties hold:
\begin{enumeratei}
\item
$G \cdot \Lambda^p \subseteq \Lambda^p$ for all $p \in P$;
\item $s(g \cdot \mu)=g \cdot s(\mu)$ and $r(g \cdot \mu)=g \cdot r(\mu)$ for all $g \in G$ and $\mu \in \Lambda$;
\item\label{g cdot (mu nu)}
$g\cdot (\mu\nu)=(g \cdot \mu)(g \vert_\mu \cdot \nu)$ for all $g \in G$, $\mu$, $\nu \in \Lambda$ with $s(\mu)=r(\nu)$;

\item\label{g vert_v=g}
$g \vert_v =g$ for all $g \in G$ and $v \in \Lambda^0$;

\item
$g \vert_{\mu\nu}=g \vert_\mu \vert_\nu$ for all $g \in G$, $\mu$, $\nu \in \Lambda$ with $s(\mu)=r(\nu)$;

\item
$1_G \vert_{\mu}=1_G$ for all $\mu \in \Lambda$;

\item\label{(gh) vert_mu}
$(gh)\vert_\mu=g \vert_{h \cdot \mu} h \vert_\mu$ for all $g$, $h \in G$ and $\mu \in \Lambda$.
\end{enumeratei}
\end{defn}

To simplify our writing, let us introduce the following notation. 
\begin{notation}
\label{N:lgl}
For $g\in G$, let $\Lambda\tensor[_g]{\times}{_s}\Lambda:=\{(\mu, \nu)\in \Lambda\times \Lambda:  s(\mu)= g\cdot s(\nu)\}$.
\end{notation}

\begin{defn}
Let $(G,\Lambda)$ be a self-similar $P$-graph. Then $(G,\Lambda)$ is said to be
\begin{enumeratei}
\item \emph{pseudo free} if for any $g\in G$ and $\mu\in\Lambda$, we have $g \cdot \mu=\mu,g \vert_\mu=1_G \implies g=1_G$;

\item \emph{locally faithful} if, for any $g \in G$ and $v\in \Lambda^0$, $g \cdot \mu = \mu$ for all $\mu\in v\Lambda\implies g=1_G$;

\item \emph{strongly locally faithful} if, for any $1_G \neq g \in G$ and $v \in \Lambda^0$, there exists $p \in P$, such that $g \cdot \mu \neq \mu$ for all $\mu\in v\Lambda^p$;

\item \emph{cofinal} if, for any $x \in \Lambda^\infty$ and $v \in \Lambda^0$, there exist $p \in P$, $\mu \in \Lambda$, $g \in G$ such that $s(\mu)=x(p,p)$ and $g \cdot v=r(\mu)$;

\item \emph{aperiodic} if, for any $v \in \Lambda^0$ there exists $x \in v\Lambda^\infty$ such that 
for $g \in G$, $p$, $q \in P$, $\sigma^{p}(x) \neq g \cdot \sigma^q(x)$ provided $g \neq 1_G$ or $p \neq q$;

\item \textit{periodic} if $(G,\Lambda)$ is not aperiodic. 
\end{enumeratei}
\end{defn}

\begin{rem}
As \cite[Remark~2.4]{LY18}, if $(G,\Lambda)$ is a locally faithful $P$-graph, then Conditions~(\ref{g vert_v=g})-(\ref{(gh) vert_mu}) of Definition~\ref{D:ssP} are redundant. 
\end{rem}

\subsection*{Standing assumptions} \textsf{Throughout the rest of this paper, all self-similar $P$-graphs are assumed to be pseudo free.}

\begin{defn}
Let $(G,\Lambda)$ be a self-similar $P$-graph.  For $g \in G$ and $(\mu,\nu)\in \Lambda\tensor[_g]{\times}{_s}\Lambda$, if $\mu(g \cdot x)=\nu x$ for all $x \in s(\nu)\Lambda^\infty$, then $(\mu,g,\nu)$ is called a \emph{cycline triple}.
Cycline triples of the form $(\mu,1_G,\nu)$ are simply called \emph{cycline pairs}.
\end{defn}

Clearly, $(\mu, 1_G, \nu)$  is a cycline pair of $(G, \Lambda)$ if and only if $(\mu,\nu)$ is a cycline pair of the underlying $P$-graph $\Lambda$ in the sense of \cite{Yan15}.

Let $\C_{G,\Lambda}$ (resp. $\C_\Lambda$) be the set of all cycline triples (resp. cycline pairs) of $(G,\Lambda)$ (resp. $\Lambda$). For $p,q\in P$, let
$\C_{G,\Lambda}^{p,q}=\{(\mu, g, \nu)\in \C_{G,\Lambda}: d(\mu)=p, d(\nu)=q\}$, and $\C_{\Lambda}^{p,q}=\{(\mu,\nu)\in\C_\Lambda: d(\mu)=p, d(\nu)=q\}$.

The following characterization of aperiodicity for $(G,\Lambda)$ can be proved completely similar to \cite[Proposition~3.5]{LY18}, and so we skip its proof here.

\begin{prop}\label{P:G-ape}
Let $(G,\Lambda)$ be a self-similar $P$-graph. Then $(G,\Lambda)$ is aperiodic if and only if for any $\mu \in \Lambda,\, g \in G,\, p,\, q \in P$ with $g \neq 1_G$ or $p \neq q$, we have
\begin{enumeratei}
\item
if $p \neq q$ then there exists $\nu \in s(\mu)\Lambda$ such that $d(\nu)- p - q \in P$ and $\nu(p,d(\nu)- q) \neq g \vert_{(\mu\nu)(q,d(\mu)+q)} \cdot \nu(q,d(\nu)-p)$;
\item
if $p=q$, then for any $\nu \in s(\mu)\Lambda$ satisfying that $d(\nu) - p \in P$ and that $g \vert_{(\mu\nu)(p,d(\mu)+p)} \neq 1_G$, there exists $\gamma \in s(\nu)\Lambda$ such that $\nu(p,d(\nu))\gamma \neq g \vert_{(\mu\nu)(p,d(\mu)+p)} \cdot (\nu(p,d(\nu))\gamma)$.
\end{enumeratei}
\end{prop}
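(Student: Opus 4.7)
I will prove the equivalence by establishing the two implications separately, following the structure of \cite[Proposition~3.5]{LY18}.

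\emph{Forward direction.} Assume $(G,\Lambda)$ is aperiodic and fix $\mu \in \Lambda$, $g \in G$, and $p, q \in P$ with $g \neq 1_G$ or $p \neq q$. Apply the aperiodicity hypothesis at the vertex $s(\mu)$ to obtain an infinite path $x \in s(\mu)\Lambda^\infty$ whose shifts $\sigma^{p'}(x)$ and $h \cdot \sigma^{q'}(x)$ are distinct whenever $(h, p', q')$ is non-trivial. The self-similarity axioms of Definition~\ref{D:ssP} translate the target inequality at $\mu$ (shift by $p$ on $\mu x$ versus the $g$-action followed by shift by $q$) into an inequality of infinite paths in $s(\mu)\Lambda^\infty$. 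Since two distinct infinite paths in a row-finite $P$-graph must disagree on some finite initial segment, taking $\nu$ to be an initial segment of $x$ with $d(\nu) - p - q \in P$ and $d(\nu)$ sufficiently large supplies the witness in clause~(i) when $p \neq q$; the analogous finite-depth argument yields clause~(ii) when $p = q$, with $\gamma$ an appropriate further suffix of $x$.

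\emph{Backward direction.} Given $v \in \Lambda^0$, I construct an aperiodic path $x \in v\Lambda^\infty$ by a diagonal argument. Enumerate
\[
\mathcal{B} := \{(g,p,q) \in G \times P \times P : g \neq 1_G \text{ or } p \neq q\}
\]
as $\{(g_n,p_n,q_n)\}_{n \geq 1}$ and fix a cofinal sequence $(m_n)$ in $P$, which exists because $P$ is countable, cancellative, and abelian (take running sums of an enumeration of $P$). Inductively build a nested family $v = \nu_0 \preceq \nu_1 \preceq \cdots$ of finite paths with $d(\nu_n) - m_n \in P$ and with the property that every infinite extension of $\nu_n$ obstructs the relation $\sigma^{p_k}(\cdot) = g_k \cdot \sigma^{q_k}(\cdot)$ for each $k \leq n$. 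At step $n$, invoke clause~(i) with $\mu = \nu_{n-1}$ when $p_n \neq q_n$, and clause~(ii) when $p_n = q_n$ (in which case $g_n \neq 1_G$). The unique infinite path extending all $\nu_n$ then witnesses aperiodicity at $v$.

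The main obstacle is clause~(ii) in the inductive step: the hypothesis only furnishes the separating $\gamma$ provided the restriction $g_n \vert_{(\nu_{n-1}\nu)(p_n,\, d(\nu_{n-1})+p_n)}$ is non-trivial. Should this restriction equal $1_G$ while $g_n \neq 1_G$, pseudo-freeness of $(G,\Lambda)$ forbids $g_n$ from fixing every sufficiently long extension of the current path pointwise, so $\nu_{n-1}$ can be pre-extended within row-finiteness until the restriction becomes non-trivial, after which clause~(ii) applies. The delicate point is to carry out this pre-extension without destroying any of the previously enforced inequalities; since those inequalities were demanded of \emph{all} infinite extensions of the earlier $\nu_k$'s, they automatically persist under further finite extension, and the argument closes exactly as in \cite[Proposition~3.5]{LY18}.
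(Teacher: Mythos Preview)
Your overall strategy matches what the paper intends (it simply defers to \cite[Proposition~3.5]{LY18}), but your treatment of the ``main obstacle'' in the backward direction contains a genuine error. You claim that when the restriction $g_n\vert_{(\nu_{n-1}\nu)(p_n,\,d(\nu_{n-1})+p_n)}$ equals $1_G$, pseudo-freeness lets you pre-extend $\nu_{n-1}$ until the restriction becomes non-trivial. This is wrong on two counts. First, pseudo-freeness does not assert that a non-trivial $g_n$ fails to fix long paths; it asserts that $g_n\cdot\alpha=\alpha$ \emph{together with} $g_n\vert_\alpha=1_G$ forces $g_n=1_G$. Second, a restriction cannot become non-trivial by further extension: once $g_n\vert_\alpha=1_G$, Definition~\ref{D:ssP}(v)--(vi) give $g_n\vert_{\alpha\beta}=(g_n\vert_\alpha)\vert_\beta=1_G\vert_\beta=1_G$ for every $\beta$, so the restriction is permanently trivial along that branch and your proposed pre-extension cannot succeed.

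The correct argument is simpler and does not require clause~(ii) at all in this case. Write $\alpha:=(\nu_{n-1}\nu)(p_n,\,d(\nu_{n-1})+p_n)$. If $g_n\vert_\alpha=1_G$ while $g_n\neq 1_G$, the contrapositive of pseudo-freeness yields $g_n\cdot\alpha\neq\alpha$ directly. Since $d(\nu)-p_n\in P$, the path $\alpha$ is precisely the initial segment $\sigma^{p_n}(x)(0,d(\nu_{n-1}))$ of any infinite extension $x$ of $\nu_{n-1}\nu$, and hence $g_n\cdot\sigma^{p_n}(x)\neq\sigma^{p_n}(x)$ is already guaranteed. Thus the obstruction for the triple $(g_n,p_n,p_n)$ is secured at this stage with no further work; clause~(ii) is needed only when the restriction is non-trivial, which is exactly what its hypothesis demands.
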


\subsection{Self-similar $P$-graph C*-algebras}

In \cite{LY17_2}, we associated a C*-algebra to each self-similar $k$-graph whose underlying $k$-graph has finite vertices. In this subsection, we generalize it in two ways: (i) the underlying $k$-graph is replaced by a $P$-graph, and (ii) the vertex set of the $P$-graph is not required to be finite.

\begin{defn}
\label{D:ssfamily}
Let $(G,\Lambda)$ be a self-similar $P$-graph, and $\A$ be a unital C*-algebra. Suppose that $\{U_g:g\in G\}$ is a family of unitaries in $\A$ and $\{S_\mu: \mu\in\Lambda\}$ is a Cuntz-Krieger $\Lambda$-family in $\A$. If they satisfy the following two properties:
\begin{enumeratei}

\item
$U_{gh}=U_g U_h$ for all $g,h \in G$, and

\item
$U_g S_\mu=S_{g \cdot \mu} U_{g \vert_\mu}$ for all $g \in G,\mu \in \Lambda$,
\end{enumeratei}
then $\{U, S\}$ is called a \textit{self-similar $(G,\Lambda)$-family}.

The C*-algebra generated by a self-similar $(G,\Lambda)$-family $\{U,S\}$ is written as $\ca(U,S)$. 
\end{defn}

\begin{rem}
\label{R:ssfamily}
(i) It directly follows from the definition that a self-similar $(G,\Lambda)$-family $\{U,S\}$ in $\A$ consists of a unitary representation $U$ of $G$ on $\A$ and a Cuntz-Krieger representation $S$ of $\Lambda$ on $\A$, which are compatible with the underlying self-similar action in the sense of Definition~\ref{D:ssfamily} (ii).

(ii)  
If $|\Lambda^0|<\infty$, as silently done in \cite{LY17_2} (also cf.~\cite{LY17}),  we can always assume that $\sum_{v\in\Lambda^0} S_v=1_\A$. Otherwise, let $P:=\sum_{v\in \Lambda^0}S_v$. 
Then, for every $g\in G$, $P$ and $U_g$ commute since 
 \[
\text{Definition~\ref{D:ssfamily} (ii)}\Rightarrow U_g S_v = S_{g\cdot v} U_{g|_v}=S_{g\cdot v} U_g\Rightarrow U_g \big(\sum_{v\in \Lambda^0} S_v\big) =\big(\sum_{v\in\Lambda^0}S_{g\cdot v}\big) U_g\Rightarrow U_g P= P U_g. 
 \]
One now can easily check that $\{PU, S\}$ is a $(G,\Lambda)$-family in the C*-algebra $P\A P$. 
\end{rem}

\begin{defn}\label{D:O_G,Lambda}
Let $(G,\Lambda)$ be a self-similar $P$-graph. Denote by $\ca_u(G,\Lambda)$ the universal unital C*-algebra generated by a self-similar $(G,\Lambda)$-family $\{u,s\}$.
The \emph{self-similar $P$-graph C*-algebra} of $(G,\Lambda)$, denoted by $\O_{G,\Lambda}$, is a subalgebra of $\ca_u(G,\Lambda)$ defined as 
\[
\O_{G,\Lambda}:=\overline{\mathrm{span}}\{s_\mu u_g s_\nu^*:  g \in G, (\mu,\nu)\in\Lambda\tensor[_g]{\times}{_s} \Lambda\}.
\]
\end{defn}

\subsubsection{The non-triviality of $\O_{G,\Lambda}$ (Cf.~\cite[Remark~3.9]{LY17_2})}
\label{SSS:nontriv}

For $g \in G$, $\mu \in \Lambda$, and $x \in \Lambda^\infty$, define $S_\mu$ and $U_g$ in $B(\ell^2(\Lambda^\infty))$ as follows:
\begin{align*}
U_g(\delta_x):=\delta_{g \cdot x}
\hskip .5cm \text{and }\quad
S_\mu(\delta_x):=\begin{cases}
   \delta_{\mu x}   &\text{ if $s(\mu)=x(0,0)$} \\
   0   &\text{ otherwise}.
\end{cases}
\end{align*}
It is easy to verify that $\{U, S\}$ is a self-similar $(G,\Lambda)$-family in $B(\ell^2(\Lambda^\infty))$. Observe that $S_{\mu}$ and $U_g$ are nonzero for all $\mu \in \Lambda^0$ and $g \in G$. So $u_g,\, s_\mu \neq 0$ for all $G\in G$ and $\mu \in \Lambda$. Therefore $\ca_u(G,\Lambda)$ exists nontrivially. Then one can now easily check that  $\O_{G,\Lambda}$ exists nontrivially.

\subsubsection{The universal property of $\O_{G,\Lambda}$}
\label{SSS:universal}

By virtue of the universal property of $\ca_u(G,\Lambda)$, we obtain the following universal property of $\O_{G,\Lambda}$: given a self-similar $(G,\Lambda)$-family $\{U,S\}$ in a C*-algebra $\A$, there is always a homomorphism $\pi$ from $\O_{G,\Lambda}$ to $\A$ such that $\pi(s_\mu u_g s_\nu^*)=S_\mu U_g S_\nu^*$ for all $g\in G$ and $(\mu, \nu)\in \Lambda\tensor[_g]{\times}{_s}\Lambda$.
In fact, let $\tilde \pi: \ca_u(G,\Lambda)\to \A$ be the homomorphism induced from the universal property of $\ca_u(G,\Lambda)$.
Then $\pi:=\tilde \pi|_{\O_{G,\Lambda}}$. Furthermore, it is easy to see that the range of $\pi$ is $\ca(U,S)$.
It is not surprising that the universal property of $\O_{G,\Lambda}$ will be used frequently later.

\subsubsection{The gauge action of $\O_{G,\Lambda}$}
\label{SSS:gauge}

By the universal property of $\O_{G,\Lambda}$ obtained above, there exists a strongly continuous homomorphism, which is called the \emph{gauge action},
$\gamma:\widehat{\fG(P)} \to \Aut(\O_{G,\Lambda})$ such that
\[
\gamma_f(s_\mu u_g s_\nu^*)=f(d(\mu)-d(\nu))s_\mu u_g s_\nu^*
\qforal f\in \widehat{\fG(P)},\, g\in G,\, (\mu,\nu)\in \Lambda\tensor[_g]{\times}{_s}\Lambda.
\]
Denote by $\mathcal{O}_{G,\Lambda}^\gamma:=\{a \in \mathcal{O}_{G,\Lambda}:\gamma_z(a)=a \text{ for all }f\in \widehat{\fG(P)} \}$, the \emph{fixed point algebra} of $\gamma$ in $\O_{G,\Lambda}$. Then we obtain a faithful expectation
$E:\mathcal{O}_{G,\Lambda} \to \mathcal{O}_{G,\Lambda}^\gamma$ such that
\[
E(s_\mu u_g s_\nu^*)=\delta_{d(\mu),d(\nu)}s_\mu u_g s_\nu^*\qforal  g\in G\text{ and } (\mu,\nu)\in \Lambda\tensor[_g]{\times}{_s}\Lambda.
\]

\subsubsection{Connections with \cite{ EPS18, LY17_2}}
\label{SSS:connections}

If $P=\bN^k$ and $\Lambda^0$ is finite, then $\mathcal{O}_{G,\Lambda}=\ca_u(G,\Lambda)$ and coincides with the one introduced in \cite{LY17_2}.

In the case of self-similar directed graphs $(G,E)$, the C*-algebra $\O_{G,E}$ is isomorphic to the one defined in \cite[Definition 2.2]{EPS18}. In fact, to distinguish, let us temporarily use $\tilde \O_{G,E}$ to denote the C*-algebra defined in \cite[Definition 2.2]{EPS18}, which is generated by
$\{\tilde p_v: v\in E^0\}\cup \{ \tilde s_a: a\in E^1\}\cup \{\tilde u_{g,v}:g\in G, v\in E^0\}$.
 On one hand, the universal property of $\tilde\O_{G,E}$, yields a homomorphism $\pi_1:\tilde\O_{G,E}\to \O_{G,E}$ such that
$\pi_1(\tilde p_v)= s_v$, $\pi_1(\tilde s_a)= s_a$, and $\pi_1(\tilde u_{g,v})=u_gs_v$.
On the other hand, one can easily check that $\{\tilde s_\mu, \sum_{v\in E^0} \tilde u_{g,v}: \mu\in E^*, g\in G\}$ is a self-similar $(G, E)$-family in $M(\tilde \O_{G,E})$ (also cf.~\cite[2.1]{EPS18}). By the universal property of $\O_{G,E}$, there is a homomorphism $\pi_2$ from $\O_{G,E}$ to $\tilde\O_{G,E}$ such that
$\pi_2(s_\mu u_g s_v^*)=\tilde s_\mu \tilde u_{g, s(\nu)} \tilde s_\nu^*$.
It is easy to see that $\pi_1$ and $\pi_2$ are the inverse of each other. Thus
$\tilde\O_{G,E}$ and $\O_{G,E}$ are isomorphic.

With that said, one could also define $\O_{G,\Lambda}$ similar to \cite[Definition~2.2]{EPS18} by giving generators and relations directly. There are two main reasons why we choose $\ca_u(G,\Lambda)$ as a bridge to define $\O_{G,\Lambda}$: (a) keep the formulation as neat as possible; (b) notice that all relations of elements in $\O_{G,\Lambda}$ can be easily derived from $\ca_u(G,\Lambda)$.


\subsection{A product system associated to $(G, \Lambda)$}
Let $(G,\Lambda)$ be a self-similar $P$-graph.
Completely similar to \cite[Section 4]{LY17_2}, we associate a product system to a self-similar $P$-graph $(G,\Lambda)$. In what follows, we only sketch the main ideas behind.

Restricting the action of $G$ to $\Lambda^0$, we obtain a full crossed product C*-algebra $A_{G,\Lambda}:=C_0(\Lambda^0) \rtimes G=\overline{\mathrm{span}}\{i(\delta_v)i(g):v \in \Lambda^0,g \in G\}$ satisfying $i(g)i(\delta_v)=i(\delta_{g \cdot v})i(g)$ for all $v \in \Lambda^0,g \in G$.
Then for each $\mu \in \Lambda$, define a closed $A_{G,\Lambda}$-submodule of $A_{G,\Lambda}$ by
\[
X_{G,\Lambda,\mu}:=i(\delta_{s(\mu)})A_{G,\Lambda}=\overline{\mathrm{span}}\{i(\delta_{s(\mu)})i(g):g \in G\}.
\]
For $p \in P$, define a right Hilbert $A_{G,\Lambda}$-module $X_{G,\Lambda,p}$ as follows:
\begin{align*}
X_{G,\Lambda,p} := \begin{cases}
    {\displaystyle \bigoplus_{\mu \in \Lambda^p}}X_{G,\Lambda,\mu} &\text{ if $p \neq 0_P$} \\
    A_{G,\Lambda} &\text{ if $p=0_P$}.
\end{cases}
\end{align*}
Let $X_{G,\Lambda}:=\amalg_{p \in P}X_{G,\Lambda,p}$.
For $0_P \neq p \in P$ and $\mu \in \Lambda^p$, let $\chi_\mu \in X_{G,\Lambda,p}$ be defined as
\begin{align*}
\chi_\mu(\nu):= \begin{cases}
    i(\delta_{s(\mu)})  &\text{ if $\mu=\nu$} \\
    0 &\text{ otherwise }.
\end{cases}
\end{align*}
For $p \in P \setminus \{0_P\}$, there exists a nondegenerate covariant homomorphism $(\pi_{G,\Lambda,p},U_{G,\Lambda,p})$ for $A_{G,\Lambda}$ in $\mathcal{L}(X_{G,\Lambda,p})$ such that $\pi_{G,\Lambda,p}(\delta_v)$ is the projection from $X_{G,\Lambda,p}$ onto $\oplus_{\mu \in v \Lambda^p}X_{G,\Lambda,\mu}$, and
\[
U_{G,\Lambda,p}(g)(\chi_\mu a)=\chi_{g \cdot \mu}i(g \vert_\mu)a \qforal v \in \Lambda^0,\, g \in G,\, \mu \in \Lambda^p,\, a \in X_{G,\Lambda,\mu}.
\]
By \cite[Theorem~2.61]{Wil07}, there exists a nondegenerate homomorphism $\phi_{G,\Lambda,p}:A_{G,\Lambda} \to \mathcal{L}(X_{G,\Lambda,p})$ such that $\phi_{G,\Lambda,p}(i(\delta_v)i(g))=\pi_{G,\Lambda,p}(\delta_v)U_{G,\Lambda,p}(g)$. Since $\mathcal{L}(\oplus_{\mu \in v \Lambda^p}X_{G,\Lambda,\mu})=\mathcal{K}(\oplus_{\mu \in v \Lambda^p}X_{G,\Lambda,\mu})$ for all $v \in \Lambda^0$,  one has $\phi_{G,\Lambda,p}(A_{G,\Lambda}) \subseteq \mathcal{K}(X_{G,\Lambda,p})$ (cf.~\cite{LY17_2}).

Then one can define a product on $X_{G,\Lambda}$ as follows: for $p$, $q \in P \setminus \{0_P\}, \mu \in \Lambda^p$, $\nu \in \Lambda^q$, $g$, $h \in G$,
\begin{align*}
(\chi_\mu j(g)) \cdot (\chi_\nu j(h))
 := \begin{cases}
    \chi_{\mu (g \cdot \nu)} j(g|_\nu h) &\text{ if } s(\mu)=r(g \cdot \nu), \\
    0 &\text{ otherwise}.
\end{cases}
\end{align*}
Then with the above product $X_{G,\Lambda}$ is a product system over $P$ with coefficient $A_{G,\Lambda}$.
As in \cite{LY17_2}, one also has $\varphi_{G,\Lambda,p}(A_{G,\Lambda}) \subseteq \mathcal{K}(X_{G,\Lambda,p})$ for every $p\in P$.

\begin{prop}\label{P:OO}
Let $(G,\Lambda)$ be a self-similar $P$-graph. Denote by $j_X$ the universal Cuntz-Pimsner covariant representation of $X_{G,\Lambda}$ in $ \mathcal{O}_{X_{G,\Lambda}}$. Then
\begin{enumeratei}
\item
$\mathcal{O}_{G,\Lambda} \cong \mathcal{O}_{X_{G,\Lambda}}$;

\item 
$\mathcal{O}_{G,\Lambda}$ is nuclear if $G$ is amenable.
\end{enumeratei}
\end{prop}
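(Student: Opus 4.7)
The plan is to establish (i) by constructing a Cuntz-Pimsner covariant representation of $X_{G,\Lambda}$ inside $\O_{G,\Lambda}$ and applying the gauge-invariant uniqueness theorem, and then to deduce (ii) from (i) using standard nuclearity results for Cuntz-Pimsner algebras of product systems with nuclear coefficients.

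For (i), I would first define a fiberwise map $\psi:X_{G,\Lambda}\to\O_{G,\Lambda}$ by
\[
\psi_0(i(\delta_v)i(g)):=s_v u_g,\qquad \psi_p(\chi_\mu j(g)):=s_\mu u_g
\]
for $v\in\Lambda^0$, $0_P\ne p\in P$, $\mu\in\Lambda^p$ and $g\in G$, and extend to each Hilbert module $X_{G,\Lambda,p}$ by linearity and continuity. Note that $\psi_0$ is well-defined on the full crossed product $A_{G,\Lambda}$ because $u_g s_v = s_{g\cdot v}u_{g|_v} = s_{g\cdot v}u_g$ (using $g|_v = g$ from Definition~\ref{D:ssP}(iv)) is exactly the covariance relation. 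Then I would verify that $\psi$ is a representation of the product system: the Toeplitz relations on each fiber and the multiplicativity $\psi_p(x)\psi_q(y)=\psi_{p+q}(xy)$ both reduce to repeated application of the self-similarity identity $u_g s_\nu=s_{g\cdot\nu}u_{g|_\nu}$ together with the unique factorization in $\Lambda$. The Cuntz-Pimsner covariance $\psi_0(a)=\psi_p^{(1)}(\phi_{G,\Lambda,p}(a))$ then translates, through the explicit description of $\phi_{G,\Lambda,p}$ recorded above, into the Cuntz-Krieger row-sum relation $s_v=\sum_{\mu\in v\Lambda^p}s_\mu s_\mu^*$. By universality of $\O_{X_{G,\Lambda}}$, $\psi$ induces a *-homomorphism $h:\O_{X_{G,\Lambda}}\to\O_{G,\Lambda}$ whose range obviously contains every spanning element $s_\mu u_g s_\nu^*$, so $h$ is surjective.

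Injectivity of $h$ is the core of the argument, and I would obtain it from Theorem~\ref{T:GIUCLSV} (applied in the case $P=\bN^k$, with an analogous gauge-invariant uniqueness statement handling the general cancellative abelian $P$). The required $\widehat{\fG(P)}$-action on $\ca(\psi(X))$ is the restriction of the gauge action $\gamma$ on $\O_{G,\Lambda}$ from Section~\ref{SSS:gauge}, which by construction scales $\psi_p(x)$ correctly. The main obstacle is injectivity of $\psi_0:A_{G,\Lambda}=C_0(\Lambda^0)\rtimes G\to\O_{G,\Lambda}$. To verify this, I would follow \cite[Section~4]{LY17_2} and construct a concrete self-similar $(G,\Lambda)$-family on a Hilbert space of the form $\ell^2(\Lambda^\infty)\otimes\ell^2(G)$ in which the $G$-coordinate carries the left regular representation of $G$, so that the induced covariant representation of $(C_0(\Lambda^0),G)$ is faithful on the full crossed product; the resulting representation of $\O_{G,\Lambda}$ obtained from the universal property in Section~\ref{SSS:universal} then restricts to a faithful representation of $A_{G,\Lambda}$ via $\psi_0$.

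For (ii), suppose $G$ is amenable. Then $A_{G,\Lambda}=C_0(\Lambda^0)\rtimes G$ is the crossed product of an abelian (hence nuclear) C*-algebra by an amenable discrete group, and is therefore nuclear. Since $X_{G,\Lambda}$ is a product system over $\bN^k$ whose coefficient algebra is nuclear and whose left actions take values in the compact operators, standard nuclearity results for Cuntz-Pimsner algebras of such product systems yield nuclearity of $\O_{X_{G,\Lambda}}$. Combining this with the isomorphism from (i) gives nuclearity of $\O_{G,\Lambda}$.
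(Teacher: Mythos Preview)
Your construction of the Cuntz--Pimsner covariant representation $\psi:X_{G,\Lambda}\to\O_{G,\Lambda}$ and the covariance check are exactly what the paper does, and part~(ii) is handled identically (the paper cites \cite[Theorem~3.21]{AM15}). The divergence is in how you show the induced map $h:\O_{X_{G,\Lambda}}\to\O_{G,\Lambda}$ is an isomorphism. The paper does not invoke a gauge-invariant uniqueness theorem at all: instead it builds an explicit inverse $\varPi:\O_{G,\Lambda}\to\O_{X_{G,\Lambda}}$ by checking that $\{j_{X,0_P}(i(g)),\,j_{X,d(\mu)}(\chi_\mu)\}$ is a self-similar $(G,\Lambda)$-family in $M(\O_{X_{G,\Lambda}})$ (after extending $j_{X,0_P}$ to the multiplier algebra by nondegeneracy), and then verifies $h\circ\varPi=\id$ and $\varPi\circ h=\id$ on generators. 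This two-sided inverse argument works uniformly for every cancellative abelian $P$ and sidesteps both obstacles in your route.

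Your route has two genuine gaps. First, Theorem~\ref{T:GIUCLSV} is stated only for product systems over $\bN^k$, while Proposition~\ref{P:OO} is asserted for arbitrary $P$; the phrase ``an analogous gauge-invariant uniqueness statement handling the general cancellative abelian $P$'' is not something available in the paper, and supplying it from \cite{CLSV11} would require verifying nontrivial hypotheses on $(P,\fG(P))$. Second, and more concretely, your proposed self-similar family on $\ell^2(\Lambda^\infty)\otimes\ell^2(G)$ with the regular representation on the $G$-factor does not satisfy the key relation $U_gS_\mu=S_{g\cdot\mu}U_{g|_\mu}$: if $U_g$ acts as $\delta_x\otimes\delta_h\mapsto\delta_{g\cdot x}\otimes\delta_{gh}$ and $S_\mu$ acts by left concatenation in the first factor and trivially in the second, then the two sides land in $\delta_{gh}$ versus $\delta_{g|_\mu h}$, which disagree whenever $g|_\mu\ne g$. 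Producing a family in which the induced covariant pair for $(C_0(\Lambda^0),G)$ integrates to a faithful representation of the \emph{full} crossed product is exactly what is delicate here when $G$ is not amenable, and the paper's explicit-inverse argument is designed to avoid this issue entirely.
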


\begin{proof}
(i) Since $j_{X,0_P}$ is a nondegenerate homomorphism, there exists a unique extension $j_{X,0_P}:M(A_{G,\Lambda}) \to M(\mathcal{O}_{X_{G,\Lambda}})$.
Also, one can verify that $\{j_{X,0_P}(i(g)),j_{X,d(\mu)}(\chi_\mu): g\in G, \mu\in\Lambda\}$ is a self-similar $(G,\Lambda)$-family in $M(\O_{X_{G,\Lambda}})$.
Therefore there is a homomorphism $\varPi:\O_{G,\Lambda}\to \mathcal{O}_{X_{G,\Lambda}}$ satisfying
\[
\varPi(s_\mu u_g s_\nu^*)=j_{X,d(\mu)}(\chi_\mu )j_{X,0_P}(i(g))j_{X,d(\nu)}(\chi_\nu)\qforal g\in G,\, (\mu,\nu)\in \Lambda\tensor[_g]{\times}{_s}\Lambda.
\]

In what follows, we construct the inverse of $\varPi$. To do so, we first define two homomorphisms $h:C_0(\Lambda^0) \to \ca_u(G,\Lambda)$ and $V:G \to \ca_u(G,\Lambda)$ by $h(\delta_v):=s_v$ for all $v \in \Lambda^0$ and $V_g:=u_g$ for all $g \in G$, respectively. Then $(h,V)$ is a nondegenerate covariant homomorphism for $A_{G,\Lambda}$ in $\ca_u(G,\Lambda)$. By \cite[Theorem~2.61]{Wil07}, there exists a homomorphism $\psi_0:A_{G,\Lambda} \to \mathcal{O}_{G,\Lambda}$ such that $\psi_0(i(\delta_v)i(g))=s_v u_g$ for all $v \in \Lambda^0$ and $g \in G$.

For $p \in P \setminus \{0_P\}$, define a map $\psi_p:X_{G,\Lambda,p} \to \mathcal{O}_{G,\Lambda}$ by
\[
\psi_p\left(\sum_{\mu \in \Lambda^p} \chi_\mu a_\mu\right):=\sum_{\mu\in\Lambda^p} s_\mu \psi_0(a_\mu).
\]
By piecing $\{\psi_p\}_{p \in P}$ together we obtain a Toeplitz representation $\psi:X_{G,\Lambda} \to \mathcal{O}_{G,\Lambda}$. To see that $\psi$ is Cuntz-Pimsner covariant, fix $p \in  P \setminus \{0_P\},v \in \Lambda^0,g \in G$, we compute that
\begin{align*}
\psi_p^{(1)}(\phi_{G,\Lambda,p}(i(\delta_v)i(g)))&=\psi_p^{(1)}(\pi_{G,\Lambda,p}(\delta_v) U_{G,\Lambda,p}(g))
\\&=\sum_{\mu \in v \Lambda^p}\psi_p^{(1)}(\Theta_{\chi_\mu,\chi_\mu}U_{G,\Lambda,p}(g))
\\&=\sum_{\mu \in v \Lambda^p} \psi_p(\chi_\mu) \psi_p(\chi_{g^{-1} \cdot \mu}i_G(g^{-1}\vert_\mu))^*
\\&=\sum_{\mu \in v \Lambda^p} s_\mu (s_{g^{-1} \cdot \mu} u_{g^{-1}\vert_\mu})^*
\\&=\sum_{\mu \in v \Lambda^p} s_\mu(u_{g^{-1}}s_\mu)^*
\\&=s_v u_g
\\&=\psi_0(i(\delta_v)i(g)).
\end{align*}
So there exists a homomorphism $\rho: \mathcal{O}_{X_{G,\Lambda}} \to \mathcal{O}_{G,\Lambda}$ such that $\rho \circ j_X=\psi$.

It is straightforward to check that $\rho \circ \varPi=\id_{\mathcal{O}_{G,\Lambda}}$ and $\varPi \circ \rho=\id_{\mathcal{O}_{X_{G,\Lambda}}}$. So $\pi$ is an isomorphism.

(ii) Since $G$ is amenable, $A_{G,\Lambda}$ is nuclear. Then (ii) immediately follows from (i) above and \cite[Theorem~3.21]{AM15}.
\end{proof}

\subsection{A groupoid associated to $(G, \Lambda)$}
The construction in this subsection is similar to \cite[Section 5]{LY17_2}.
Let $(G,\Lambda)$ be a self-similar $P$-graph. Denote by $C(P,G)$ the set of all mappings from $P$ to $G$, which is a group under the pointwise multiplication. For $h \in \fG(P)$ and $f \in C(P,G)$, define $\T_{h}(f) \in C(P,G)$ by
\begin{align*}
\T_{h}(f)(p)&:= \begin{cases}
    f(p-h) &\text{if $p-h \in P$} \\
    1_G &\text{otherwise}.
\end{cases}
\end{align*}
For $f_1,f_2 \in C(P,G)$, define an equivalence relation $f_1 \sim f_2$ if there exists $p \in P$ such that $f_1(q)=f_2(q)$ for all $q \in P$ with $q- p \in P$. Let $Q(P,G)$ be the quotient group $C(P,G)/\!\!\sim$.  For $f\in C(P,G)$, we write $[f] \in Q(P,G)$. Then $\T_h$ yields an automorphism, still denoted by $\T_h$, on $Q(P,G)$. Since $\T:\fG(P) \to \Aut( Q(P,G)),\ h\mapsto \T_h$, is a homomorphism, we obtain the semidirect product $Q(P,G) \rtimes_\T \fG(P)$.

For $g \in G$ and $x \in \Lambda^\infty$, define $g\cdot x\in \Lambda^\infty$ and $g \vert_x \in C(P,G)$ by
\begin{align*}
(g \cdot x)(p,q)&:=g \vert_{x(0_P,p)} \cdot x(p,q) \qforal  p,q \in P \text{ with } q-p \in P,\\
g \vert_x(p)&:=g \vert_{x(0_P,p)}\qforal p \in P.
\end{align*}

Define
\[
\mathcal{G}_{G,\Lambda}:=\left\{\big(\mu (g \cdot x);\T_{d(\mu)}([g \vert_{x}]),d(\mu)-d(\nu);\nu x\big):
\begin{matrix}
g \in G, (\mu,\nu)\in\Lambda\tensor[_g]{\times}{_s}\Lambda\\
x \in s(\nu)\Lambda^\infty
\end{matrix}
\right\},
\]
which is a subgroupoid of $\Lambda^\infty \times (Q(P,G) \rtimes_\T \fG(P) ) \times \Lambda^\infty$.

For $g \in G$ and $(\mu,\nu)\in\Lambda\tensor[_g]{\times}{_s}\Lambda$, define
\[
Z(\mu,g,\nu):=\big\{(\mu (g \cdot x);\T_{d(\mu)}([g \vert_{x}]),d(\mu)-d(\nu);\nu x):x \in s(\nu)\Lambda^\infty\big\}.
\]
Endow $\mathcal{G}_{G,\Lambda}$ with the topology generated by the basic open sets
\[
\mathcal{B}_{G,\Lambda}:=\big\{Z(\mu,g,\nu):g \in G, (\mu,\nu)\in\Lambda\tensor[_g]{\times}{_s}\Lambda\big\}.
\]
A similar argument of the proof of \cite[Theorem~5.8]{LY17_2} proves

\begin{prop}
Let $(G,\Lambda)$ be a self-similar $P$-graph. Then $\G_{G,\Lambda}$ is an ample groupoid with each $Z(\mu, g, \nu)$ being a compact open bisection.
\end{prop}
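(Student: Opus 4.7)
The plan is to mimic the proof of \cite[Theorem~5.8]{LY17_2}, checking that each step survives the replacement of $\bN^k$ by a general cancellative abelian semigroup $P$ and the passage to a possibly infinite vertex set. I would proceed in four stages.

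First, I would verify that $\G_{G,\Lambda}$ is a well-defined subgroupoid. The unit space is identified with $\Lambda^\infty$ via $x\mapsto (x;[\mathbf{1}_G],0_{\fG(P)};x)$, where $\mathbf{1}_G$ denotes the constant map $p\mapsto 1_G$. Closure under composition and inversion is the content of the self-similarity identities (Definition~\ref{D:ssP}(iii)--(vii)) combined with the fact that $\T$ is a homomorphism from $\fG(P)$ into $\Aut(Q(P,G))$; in particular one needs $\T_{d(\mu)}([g|_{\nu\cdot y}])\cdot \T_{d(\mu\nu)}([(g|_\nu)|_y])$ to collapse correctly, which is a direct cocycle computation.

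Second, I would show that $\B_{G,\Lambda}$ is a basis. Given $Z(\mu_1,g_1,\nu_1)\cap Z(\mu_2,g_2,\nu_2)\neq\mt$ with a common element corresponding to parameters $x_1\in s(\nu_1)\Lambda^\infty$ and $x_2\in s(\nu_2)\Lambda^\infty$, unique factorization in $\Lambda$ (applied to $\nu_1 x_1=\nu_2 x_2$ and $\mu_1(g_1\cdot x_1)=\mu_2(g_2\cdot x_2)$) forces the two sides to agree on a common long enough prefix; using pseudo-freeness one then pins down a single triple $(\mu,h,\nu)$ whose basic set sits inside the intersection and contains the chosen element.

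Third, I would check that each $Z(\mu,g,\nu)$ is a compact open bisection. The source map restricts to the bijection $(\mu(g\cdot x);\cdot;\nu x)\mapsto \nu x$ onto the cylinder set $Z(\nu)\subseteq\Lambda^\infty$, with continuous inverse prescribed by the formula defining $Z(\mu,g,\nu)$; the range map is treated symmetrically. Since $\Lambda$ is row-finite, cylinder sets in $\Lambda^\infty$ are compact open in the standard topology, and hence so is $Z(\mu,g,\nu)$. Continuity of multiplication and inversion then follows from the straightforward computation $Z(\mu,g,\nu)\cdot Z(\nu,h,\sigma)\subseteq Z(\mu,gh,\sigma)$ (after reassociating via Definition~\ref{D:ssP}(vii)), together with the basis property.

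The main obstacle will be the Hausdorffness check. Suppose two distinct elements share the same path coordinates $(\mu(g_1\cdot x);\,\cdot\,;\nu x)=(\mu(g_2\cdot x);\,\cdot\,;\nu x)$ but differ in the middle group coordinate $\T_{d(\mu)}([g_i|_x])$. Here one must convert a disagreement in the equivalence class in $Q(P,G)$ into an observable disagreement on some finite prefix $x(0_P,p)$, so that the basic sets $Z(\mu\cdot x(0_P,p),g_i|_{x(0_P,p)},\nu\cdot x(0_P,p))$ separate the two points; this is exactly where pseudo-freeness is needed, since without it two distinct group elements could act identically on a cofinal family of prefixes. Once Hausdorffness is in place, ampleness is immediate.
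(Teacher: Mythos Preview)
Your proposal is correct and follows exactly the approach the paper indicates: the paper gives no proof beyond the sentence ``A similar argument of the proof of \cite[Theorem~5.8]{LY17_2} proves'' this proposition, and your four-stage outline is precisely that adaptation, with pseudo-freeness correctly identified as the ingredient needed for Hausdorffness.
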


\begin{lem}\label{L:G-ape}
Let $(G,\Lambda)$ be a self-similar $P$-graph. Then
$(G,\Lambda)$ is aperiodic if and only if $\mathcal{G}_{G,\Lambda}$ is topologically principal.
\end{lem}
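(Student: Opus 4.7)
The plan is to analyze the isotropy groups of $\mathcal{G}_{G,\Lambda}$ pointwise on the unit space, which is naturally identified with $\Lambda^\infty$ via $x\leftrightarrow (x;[1_G],0_P;x)$. Any isotropy element at $x$ has the form $(x;\T_p([g|_{\sigma^q(x)}]),p-q;x)$ for some $g\in G$ and $p,q\in P$ satisfying $\sigma^p(x)=g\cdot\sigma^q(x)$; thus $x$ has trivial isotropy precisely when every such triple $(g,p,q)$ forces both $p=q$ and $[g|_{\sigma^q(x)}]=[1_G]$ in $Q(P,G)$.

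For the forward implication, assume $(G,\Lambda)$ is aperiodic. Given a basic open set $Z(\alpha,1_G,\alpha)$ of the unit space, I would use aperiodicity at $s(\alpha)$ to pick an aperiodic $y\in s(\alpha)\Lambda^\infty$, and argue that $x:=\alpha y$ has trivial isotropy. Suppose $\sigma^p(x)=g\cdot\sigma^q(x)$; choose $r\in P$ large enough (e.g.~$r=d(\alpha)$) so that both $p+r$ and $q+r$ dominate $d(\alpha)$. Applying the cocycle identity $\sigma^r(g\cdot z)=g|_{z(0,r)}\cdot\sigma^r(z)$ with $z=\sigma^q(x)$, and using the fact that $\sigma^{p'+r}(x)=\sigma^{p'+r-d(\alpha)}(y)$ once $p'+r\geq d(\alpha)$, the relation descends to $\sigma^{p+r-d(\alpha)}(y)=g|_{(\sigma^q(x))(0,r)}\cdot\sigma^{q+r-d(\alpha)}(y)$. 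Aperiodicity of $y$ forces $p=q$ and $g|_{(\sigma^q(x))(0,r)}=1_G$, and letting $r$ grow yields $[g|_{\sigma^q(x)}]=[1_G]$. Since the sets $Z(\alpha,1_G,\alpha)$ form a basis of the unit space, this shows topological principality.

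For the converse, assume topological principality and suppose for contradiction that $(G,\Lambda)$ is not aperiodic, witnessed by some $v\in\Lambda^0$ such that every $x\in v\Lambda^\infty$ admits $(g,p,q)$ with $\sigma^p(x)=g\cdot\sigma^q(x)$ and either $g\neq 1_G$ or $p\neq q$. When $p\neq q$ the corresponding isotropy element has nonzero $\fG(P)$-component $p-q$ and so is nontrivial. When $p=q$ and $g\neq 1_G$, set $z=\sigma^p(x)$; the equation $g\cdot z=z$ gives $g\cdot z(0,r)=z(0,r)$ for every $r\in P$, and pseudo-freeness then rules out $g|_{z(0,r)}=1_G$ for any $r\in P$, so $[g|_z]$ (and hence $\T_p([g|_z])$) is nontrivial in $Q(P,G)$. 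In either subcase every unit in the nonempty open set $v\Lambda^\infty$ has nontrivial isotropy, contradicting topological principality.

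The main technical subtlety is in the forward direction, where $p$ and $q$ need not dominate $d(\alpha)$; the ``shift by $r$'' device is what reduces the isotropy relation at $x=\alpha y$ to aperiodicity of $y$. Pseudo-freeness plays no role in this direction, but is exactly what powers the converse by upgrading fixed-point information to a nontrivial class in $Q(P,G)$.
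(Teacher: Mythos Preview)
Your proof is correct and follows essentially the same strategy as the paper's: both arguments translate between isotropy at a unit $x$ and shift relations $\sigma^p(x)=g\cdot\sigma^q(x)$, and both handle the forward direction by reducing isotropy at $\alpha y$ to aperiodicity of $y$ via a shift. Your treatment is in fact more careful than the paper's in the converse direction: when $p=q$ and $g\neq 1_G$, the paper simply asserts that the resulting isotropy element is nontrivial, whereas you correctly observe that pseudo-freeness is needed to conclude $[g|_{\sigma^p(x)}]\neq[1_G]$ from $g\cdot\sigma^p(x)=\sigma^p(x)$.
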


\begin{proof}
The proof is similar to that of \cite[Proposition 6.5]{LY17_2}.
First, suppose that $\mathcal{G}_{G,\Lambda}$ is not topologically principal. Then there exists $\mu \in \Lambda$ such that for any $x \in s(\mu)\Lambda^\infty$, we have $(\mathcal{G}_{G,\Lambda})_{\mu x}^{\mu x} \neq \{\mu x\}$. Fix $x \in s(\mu)\Lambda^\infty$. Then there exists $\big(\alpha (g \cdot y);\T_{d(\alpha)}([g \vert_y]),d(\alpha)-d(\beta);\beta y\big) \in \mathcal{G}_{G,\Lambda}$ such that $\mu x=\alpha (g \cdot y)=\beta y$ and $(\T_{d(\alpha)}([g \vert_y]),d(\alpha)-d(\beta)) \neq (1_{Q(P,G)},0_P)$. If $\T_{d(\alpha)}([g \vert_y]) \neq 1_{Q(P,G)}$, then there exists $p \in P$ such that $g \vert_{y(0_P,d(\mu)+p)} \neq 1_G$ and so that $\sigma^{d(\alpha)+p}(x)=g \vert_{y(0,d(\mu)+p)} \cdot \sigma^{d(\beta)+p}(x)$. If $d(\alpha) \neq d(\beta)$, then $\sigma^{d(\alpha)}(x)=g \vert_{y(0,d(\mu))} \cdot \sigma^{d(\beta)}(x)$. Hence $(G,\Lambda)$ is not $G$-aperiodic.

Conversely, suppose that $(G,\Lambda)$ is not $G$-aperiodic. Then there exists $v \in \Lambda^0$ satisfying that for any $x \in v \Lambda^\infty$ there exist $g \in G, p,q \in P$ with $g \neq 1_G$ or $p \neq q$, such that $\sigma^{p}(x)=g \cdot \sigma^q(x)$. So for any $x \in v\Lambda^\infty,(x;\T_p([g \vert_{\sigma^q(x)}]),p-q;x) \in (\mathcal{G}_{G,\Lambda})_{ x}^{ x} $. Hence $(\mathcal{G}_{G,\Lambda})_{ x}^{ x} \neq \{x\}$. Therefore $\mathcal{G}_{G,\Lambda}$ is not topologically principal.
\end{proof}

For later use, we record the following lemma whose proof is straightforward. 

\begin{lem}\label{operators in multiplier}
Let $A$ be a C*-algebra, $S$ be a set of generators of $A$, and $(T_i)_{i \in I}$ be a net of operators in $M(A)$ such that $(T_i)_{i \in I}$ is uniformly bounded. Suppose that for any $a \in S \cup S^*$, the nets $(T_ia)_{i \in I}$ and $(T_i^*a)_{i \in I}$ converge. Then $(T_i)_{i \in I},(T_i^*)_{i \in I}$ converge strictly and $(\lim_{i \in I}T_i)^*=\lim_{i \in I}T_i^*$.
\end{lem}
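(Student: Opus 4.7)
\medskip

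\noindent\textbf{Proof proposal.}
The plan is a standard ``extend by uniform boundedness'' argument. Let $M:=\sup_{i\in I}\|T_i\|<\infty$ and set
\[
B:=\{a\in A:(T_ia)_{i\in I}\text{ and }(T_i^*a)_{i\in I}\text{ both converge in norm}\}.
\]
I would show $B=A$ in three substeps. First, $B\supseteq S\cup S^*$ by hypothesis (note that the hypothesis is self-adjoint: if $a\in S^*$ then $a\in S\cup S^*$, so both conditions apply). Second, $B$ is a subalgebra: linearity is immediate, and if $a,b\in B$ then $T_i(ab)=(T_ia)b\to(\lim_iT_ia)b$ and $T_i^*(ab)=(T_i^*a)b\to(\lim_iT_i^*a)b$, using that $T_i,T_i^*\in M(A)$ act as left multipliers. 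Consequently $B$ contains the (not-necessarily-$*$-closed) subalgebra generated by the self-adjoint set $S\cup S^*$, which coincides with the $*$-subalgebra of $A$ generated by $S$; by hypothesis this is norm-dense in $A$.

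Third, $B$ is norm-closed: given $a_n\in B$ with $a_n\to a$ and $\varepsilon>0$, choose $n$ with $\|a-a_n\|<\varepsilon/(4M)$ and then $i_0$ so that $\|T_ia_n-T_ja_n\|<\varepsilon/2$ for $i,j\succeq i_0$; the triangle inequality together with $\|T_i\|,\|T_j\|\le M$ gives $\|T_ia-T_ja\|<\varepsilon$ for $i,j\succeq i_0$, so $(T_ia)$ is Cauchy, hence convergent. The identical estimate for $(T_i^*a)$ shows $a\in B$. Combining the three substeps, $B=A$, so $(T_ia)$ and $(T_i^*a)$ converge for every $a\in A$.

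To pass to strict convergence, recall that the strict topology on $M(A)$ is generated by the seminorms $T\mapsto\|Ta\|$ and $T\mapsto\|aT\|$ with $a\in A$. We have $(T_ia)$ converging for every $a\in A$, and for the right action $aT_i=(T_i^*a^*)^*$ is the adjoint of a convergent net, hence converges as well; similarly for $T_i^*$. Thus $T_i\to T$ and $T_i^*\to T'$ strictly in $M(A)$ for some $T,T'\in M(A)$. Finally, to verify $T^*=T'$, use the definition of the involution on $M(A)$ via $T^*(a)=(a^*T)^*$:
\[
T^*(a)=(a^*T)^*=\bigl(\lim_i a^*T_i\bigr)^{\!*}=\lim_i(a^*T_i)^*=\lim_iT_i^*a=T'(a)\qforal a\in A.
\]
Since $T^*$ and $T'$ agree as left multipliers on all of $A$, they coincide in $M(A)$.

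The whole argument is routine once the set $B$ is introduced; the only place where care is required is noticing that the hypothesis applies to the self-adjoint set $S\cup S^*$, so one does not need $B$ itself to be $*$-closed. No genuine obstacle is expected.
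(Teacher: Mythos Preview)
Your proof is correct. The paper does not actually give a proof of this lemma; it merely records the statement with the remark that ``the proof is straightforward.'' Your argument supplies exactly the routine details the authors had in mind: extend the convergence from $S\cup S^*$ to a dense $*$-subalgebra using that $B$ is closed under products, then to all of $A$ by a standard $\varepsilon/3$ estimate exploiting the uniform bound, and finally pass from one-sided to strict convergence via $aT_i=(T_i^*a^*)^*$.
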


\begin{prop}\label{P:OG}
Let $(G,\Lambda)$ be a self-similar $P$-graph.
Then there exists a natural surjective homomorphism from $\mathcal{O}_{G,\Lambda}$ onto
$\ca(\mathcal{G}_{G,\Lambda})$.
\end{prop}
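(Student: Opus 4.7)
The plan is to build a self-similar $(G,\Lambda)$-family inside the unital C*-algebra $M(\ca(\mathcal{G}_{G,\Lambda}))$ and then invoke the universal property of $\mathcal{O}_{G,\Lambda}$ from \ref{SSS:universal}. The natural map should send $s_\mu u_g s_\nu^*$ to the characteristic function of the basic bisection $Z(\mu,g,\nu)$, so the candidates are clear: for each $\mu\in\Lambda$ set $S_\mu:=\mathds{1}_{Z(\mu,1_G,s(\mu))}\in C_c(\mathcal{G}_{G,\Lambda})$, and for each $g\in G$ define $U_g$ as the strict limit in $M(\ca(\mathcal{G}_{G,\Lambda}))$ of the net $U_{g,F}:=\sum_{v\in F}\mathds{1}_{Z(g\cdot v,\,g,\,v)}$ indexed by finite $F\subseteq\Lambda^0$. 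Existence of this strict limit is precisely what Lemma~\ref{operators in multiplier} was recorded for: each $U_{g,F}$ has norm at most one because its summands are pairwise orthogonal partial isometries, and for every generating characteristic function $\mathds{1}_{Z(\alpha,h,\beta)}$ both $U_{g,F}\mathds{1}_{Z(\alpha,h,\beta)}$ and $U_{g,F}^{\,*}\mathds{1}_{Z(\alpha,h,\beta)}$ become eventually constant once $F$ contains the single relevant vertex, so the lemma produces $U_g$ and identifies $U_g^{\,*}$ with $U_{g^{-1}}$.

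With $\{U,S\}$ in hand, I would check the self-similar $(G,\Lambda)$-family axioms through the bisection calculus $\mathds{1}_B\mathds{1}_C=\mathds{1}_{BC}$ valid for compact open bisections in an ample groupoid. The Cuntz--Krieger relations for $\{S_\mu\}$ reduce to the statement that $\{Z(\nu,\nu):\nu\in v\Lambda^p\}$ partitions $Z(v,v)$ inside $\Lambda^\infty$. The relation $U_gU_h=U_{gh}$ follows from the bisection identity $Z(g\cdot v,g,v)\cdot Z(h\cdot w,h,w)\subseteq Z(gh\cdot w,gh,w)$ whenever $v=h\cdot w$, and the compatibility $U_gS_\mu=S_{g\cdot\mu}U_{g|_\mu}$ follows from an analogous product of bisections; both identities just unfold the self-similarity axioms of Definition~\ref{D:ssP} to match the resulting group cocycle factors in $Q(P,G)\rtimes_\T\fG(P)$.

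The universal property of $\mathcal{O}_{G,\Lambda}$ then produces a homomorphism $\pi:\mathcal{O}_{G,\Lambda}\to M(\ca(\mathcal{G}_{G,\Lambda}))$, and a direct bisection computation yields $\pi(s_\mu u_g s_\nu^*)=S_\mu U_g S_\nu^*=\mathds{1}_{Z(\mu,g,\nu)}$. In particular the image already sits inside $\ca(\mathcal{G}_{G,\Lambda})$. Since $\mathcal{B}_{G,\Lambda}$ is an open basis of compact open bisections for the ample groupoid $\mathcal{G}_{G,\Lambda}$, the linear span of $\{\mathds{1}_{Z(\mu,g,\nu)}\}$ is dense in $\ca(\mathcal{G}_{G,\Lambda})$, so $\pi$ is the desired natural surjection.

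The only real obstacle is that $\Lambda^0$ is allowed to be infinite, which forces the unitaries $U_g$ to be realised in the multiplier algebra rather than in $\ca(\mathcal{G}_{G,\Lambda})$ itself; this is exactly why Lemma~\ref{operators in multiplier} was placed just before the proposition. In the finite-vertex case every sum collapses to a finite one, $U_g$ lives honestly inside $\ca(\mathcal{G}_{G,\Lambda})$, and no multiplier-algebra machinery is needed — everything else is routine ample-groupoid bookkeeping.
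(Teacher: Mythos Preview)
Your proposal is correct and follows essentially the same route as the paper: define $S_\mu=\mathds{1}_{Z(\mu,1_G,s(\mu))}$, build $U_g$ as a strict limit via Lemma~\ref{operators in multiplier}, verify $\{U,S\}$ is a self-similar $(G,\Lambda)$-family, and apply the universal property. Your indexing $U_g=\sum_v\mathds{1}_{Z(g\cdot v,g,v)}$ differs cosmetically from the paper's $\sum_v\mathds{1}_{Z(v,g,g^{-1}\cdot v)}$, but the two sums coincide after the substitution $v\mapsto g^{-1}\cdot v$; otherwise the arguments match, with yours spelling out the bisection calculus that the paper delegates to \cite[Theorem~5.9]{LY17_2}.
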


\begin{proof}
For any $\mu \in \Lambda$ and $g \in G$, let $S_\mu:=\mathds{1}_{Z(\mu,1_G,s(\mu))}$ and $U_g:=\sum_{v \in \Lambda^0}\mathds{1}_{Z(v,g,g^{-1} \cdot v)}$. By Lemma~\ref{operators in multiplier}, one has $U_g \in UM(\ca(\mathcal{G}_{G,\Lambda}))$.
Then $\{U, S\}$ is a self-similar $(G,\Lambda)$-family in $M(\ca(\mathcal{G}_{G,\Lambda}))$. So, by the universal property of $\O_{G,\Lambda}$, there exists a homomorphism $h_2:\O_{G,\Lambda} \to M(\ca(\mathcal{G}_{G,\Lambda}))$ such that $h_2(s_\mu u_g s_\nu^*)=S_\mu U_g S_\nu^*$ for all $g\in G$ and $(\mu,\nu)\in \Lambda\tensor[_g]{\times}{_s}\Lambda$.
As the proof of \cite[Theorem~5.9]{LY17_2}, one can check that the range of $h_2$ is $\ca(\G_{G,\Lambda})$.
\end{proof}

\subsection{A Cuntz-Krieger uniqueness theorem}

In this subsection, we prove a Cuntz-Krieger uniqueness theorem for self-similar $P$-graph C*-algebras, which will be essentially used in Section \ref{S:primitive} to describe the structure for the primitive ideas of self-similar $k$-graph C*-algebras.

Let $(G,\Lambda)$ be a self-similar $P$-graph. To obtain our Cuntz-Krieger uniqueness theorem for $\O_{G,\Lambda}$, for some technical reasons, we have to assume that
\begin{align*}
\textsf{$g\cdot v=v$ \text{ for all } $g\in G$ \text{ and }$v\in \Lambda^0$.}
\tag{FV}
\end{align*}
Notice that, under Property~(FV), we have $\Lambda\tensor[_g]{\times}{_s}\Lambda = \{(\mu, \nu)\in \Lambda\times \Lambda: s(\mu)=s(\nu)\}=:\Lambda\times_s\Lambda$ for every $g\in G$.

\begin{lem}\label{Fourier}
Let $H$ be a countable discrete abelian group. Then $\int_{f \in \widehat{H}}f(h_0)\,df=0$ for all $0_H \neq h_0 \in H$.
\end{lem}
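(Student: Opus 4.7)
Since $H$ is a countable discrete abelian group, its Pontryagin dual $\widehat{H}$ is a (second countable) compact Hausdorff abelian group, so it carries a unique translation–invariant probability Haar measure, and the integral in the statement is interpreted with respect to this measure. The plan is the standard ``orthogonality of characters'' argument on a compact group.

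First, I would interpret evaluation at $h_0$ as a character of $\widehat{H}$: the map $\chi_{h_0}\colon \widehat{H}\to \bT$ defined by $\chi_{h_0}(f):=f(h_0)$ is continuous and multiplicative, i.e.\ $\chi_{h_0}\in\widehat{\widehat{H}}$. Pontryagin duality identifies $H$ with $\widehat{\widehat{H}}$ via $h\mapsto \chi_h$, so because $h_0\neq 0_H$ the character $\chi_{h_0}$ is nontrivial. In particular there exists $f_0\in\widehat{H}$ with $f_0(h_0)\neq 1$.

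Next, I would compute the integral using translation invariance of Haar measure. Writing $I:=\int_{f\in\widehat{H}} f(h_0)\,df$, substitute $f\mapsto f f_0$ (this leaves the measure invariant) to get
\[
I \;=\; \int_{f\in\widehat{H}} (ff_0)(h_0)\,df \;=\; f_0(h_0)\int_{f\in\widehat{H}} f(h_0)\,df \;=\; f_0(h_0)\, I.
\]
Hence $(1-f_0(h_0))I=0$, and since $f_0(h_0)\neq 1$ we conclude $I=0$.

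There is essentially no obstacle here; the only ``content'' is the appeal to Pontryagin duality to exhibit some $f_0\in\widehat{H}$ with $f_0(h_0)\neq 1$, and this is classical for (countable) discrete abelian groups. The rest is the one-line orthogonality trick on a compact group.
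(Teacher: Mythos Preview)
Your proof is correct, but it takes a different route from the paper's. The paper proves the lemma by invoking the Fourier inversion theorem: defining $F\in L^1(H)$ by $F(0_H)=1$ and $F(h)=0$ otherwise, one computes $\widehat{F}\equiv 1$ on the compact group $\widehat{H}$, and then Fourier inversion (\cite[Theorem~4.32]{Fol95}) gives $0=F(h_0)=\int_{\widehat{H}} f(h_0)\widehat{F}(f)\,df=\int_{\widehat{H}} f(h_0)\,df$. Your argument instead uses the standard orthogonality-of-characters trick directly: exhibit $f_0\in\widehat{H}$ with $f_0(h_0)\neq 1$ and then exploit translation invariance of Haar measure to force $I=f_0(h_0)I$. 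Your approach is somewhat more elementary, since it only needs that the characters of $H$ separate points together with translation invariance, rather than the full Fourier inversion machinery; the paper's approach has the virtue of being a one-line citation once the inversion theorem is taken as a black box. Either way the content is the same classical fact.
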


\begin{proof}
Define an $L^1(H)$ function $F$ by $F(0_H):=1$ and $F(h):=0$ for all $h \neq 0_H$. Then the Fourier transformation of $F$, denoted by $\widehat{F}$, belongs to $L^1(\widehat{H})$. 
By the Fourier inversion theorem (see \cite[Theorem~4.32]{Fol95}), we have
\[
0=F(h_0)=\int_{f \in \widehat{H}}f(h_0)\widehat{F}(f)\,df=\int_{f \in \widehat{H}}f(h_0)\,df.
\]
So we are done.
\end{proof}

Let $(G,\Lambda)$ be a self-similar $P$-graph. Let $\pi:\mathcal{O}_{G,\Lambda} \to B$ be a surjective homomorphism. Suppose that there exists a strongly continuous homomorphism $\alpha:\widehat{\fG(P)} \to \Aut(B)$ such that
$\pi$ intertwines $\alpha$ and $\gamma$.
Denote by
$B^\alpha:=\{a \in B:\alpha_f(a)=a \text{ for all }f \in \widehat{\fG(P)}  \}$ the \emph{fixed point algebra of $\alpha$}. By Lemma~\ref{Fourier}, we obtain a faithful expectation $F:B \to B^\alpha$ defined by
\[
F(x)=\int_{f\in \widehat{\fG(P)}}\alpha_f(x)\, df.\]
Clearly, one has
$F(\pi(s_\mu u_g s_\nu^*))=\delta_{d(\mu),d(\nu)}\pi(s_\mu u_g s_\nu^*)$ for all $g\in G$ and $(\mu, \nu) \in \Lambda\times_s \Lambda$.

The following can be regarded as a gauge-invariant uniqueness theorem for self-similar $P$-graph C*-algebras.

\begin{thm}\label{T:gissP}
Let $(G,\Lambda)$ be a self-similar $P$-graph with Property (FV), and $\pi:\mathcal{O}_{G,\Lambda} \to B$ be a surjective homomorphism. Suppose that
\begin{enumeratei}
\item there exists a strongly continuous homomorphism $\alpha:\widehat{\fG(P)} \to \Aut(B)$ such that $\pi$ intertwines $\alpha$ and the gauge action $\gamma$ of $\O_{G,\Lambda}$;
\item $\pi(s_v) \neq 0$ for all $v \in \Lambda^0$;
\item
there exists an expectation $\Phi:B \to \pi(\mathcal{D}_\Lambda)$ such that
\[
\Phi(\pi(s_\mu u_g s_\nu^*))=\delta_{\mu,\nu}\delta_{g,1_G}\pi(s_\mu s_\mu^*) \qforal g\in G, (\mu, \nu) \in \Lambda\times_s\Lambda.
\]
\end{enumeratei}
Then $\pi$ is injective.
\end{thm}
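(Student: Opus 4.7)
I would use the standard two-step strategy for gauge-invariant uniqueness theorems: first use the gauge action to reduce to injectivity on the fixed-point algebra, then combine the hypothesized expectation $\Phi$ with a canonical \emph{faithful} conditional expectation onto the diagonal to conclude. By hypothesis (i) and Lemma~\ref{Fourier}, $F(b):=\int_{\widehat{\fG(P)}}\alpha_f(b)\,df$ defines a faithful conditional expectation $F:B\to B^\alpha$ satisfying $F\circ\pi=\pi\circ E$, where $E:\O_{G,\Lambda}\to\O_{G,\Lambda}^\gamma$ is the canonical faithful expectation from \S\ref{SSS:gauge}. It therefore suffices to prove $\pi$ is injective on $\O_{G,\Lambda}^\gamma=\overline{\bigcup_{p\in P}\F_p}$, where $\F_p:=\overline{\mathrm{span}}\{s_\mu u_g s_\nu^*:d(\mu)=d(\nu)=p\}$. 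On the other hand, hypothesis (ii) together with $\pi(s_\mu)^*\pi(s_\mu)=\pi(s_{s(\mu)})\ne 0$ gives $\pi(s_\mu s_\mu^*)\ne 0$ for every $\mu$; since $\D_\Lambda$ is commutative with character space modelled on $\Lambda^\infty$ and the cylinder-set projections form a clopen basis, $\pi|_{\D_\Lambda}$ is injective.

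\textbf{The main technical step} is to construct a faithful conditional expectation $E^\D:\O_{G,\Lambda}^\gamma\to\D_\Lambda$ sending $s_\mu u_g s_\nu^*$ to $\delta_{\mu,\nu}\delta_{g,1_G}s_\mu s_\mu^*$. Under Property (FV) we have $u_g s_v=s_v u_g$, and the $P$-graph relations then yield a decomposition $\F_p=\bigoplus_{v\in\Lambda^0}\F_p^v$ where $\F_p^v$ is a ``matrix-over-group'' algebra spanned by $\{s_\mu u_g s_\nu^*:s(\mu)=s(\nu)=v,\,d(\mu)=d(\nu)=p,\,g\in G\}$ with product $(s_\mu u_g s_\nu^*)(s_\alpha u_h s_\beta^*)=\delta_{\nu,\alpha}s_\mu u_{gh}s_\beta^*$. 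To obtain a faithful expectation of this algebra onto its scalar diagonal $\D_p^v:=\mathrm{span}\{s_\mu s_\mu^*:\mu\in v\Lambda^p\}$ without invoking amenability of $G$, I would equip $\A_0:=\mathrm{span}\{s_\mu u_g s_\nu^*:(\mu,\nu)\in\Lambda\times_s\Lambda,\,g\in G\}$ with an inner product coming from a faithful vector functional on $\ell^2(\Lambda^\infty)$ (via the representation in \S\ref{SSS:nontriv}), turn it into a Hilbert algebra by verifying the four axioms, and invoke Theorem~\ref{exist a semifinite trace} to obtain a faithful semifinite lower semicontinuous trace $\tau$ on the enveloping von Neumann algebra $\A_0''$. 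The canonical conditional expectation associated to $\tau$ then restricts to the required $E^\D$ on $\O_{G,\Lambda}^\gamma$, and compatibility across the filtration $\{\F_p\}$ glues the finite-level expectations together.

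\textbf{Conclusion.} By construction $\pi\circ E^\D$ and $\Phi\circ\pi$ agree on the generators of $\O_{G,\Lambda}^\gamma$, hence on all of it. If $a\in\O_{G,\Lambda}^\gamma$ satisfies $\pi(a)=0$, then $\pi(E^\D(a^*a))=\Phi(\pi(a^*a))=0$; injectivity of $\pi|_{\D_\Lambda}$ forces $E^\D(a^*a)=0$; faithfulness of $E^\D$ gives $a=0$. Combined with the gauge reduction this proves $\pi$ is injective on $\O_{G,\Lambda}$.

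\textbf{Main obstacle.} The hard part is producing the faithful $E^\D$. The naive construction, combining matrix-trace projections with $u_g\mapsto\delta_{g,1_G}$ on $C^*(G)$, fails to be faithful whenever $G$ is non-amenable, because the ``coefficient-at-identity'' functional factors through the surjection $C^*(G)\twoheadrightarrow C^*_r(G)$ and loses information. The Hilbert-algebra framework of \S 2.1 is included precisely to sidestep this, providing a faithful trace on the enveloping von Neumann algebra intrinsically. Verifying the Hilbert-algebra axioms for $\A_0$ (hermitian symmetry, associativity with the adjoint, continuity of left multiplication, and density of $\A_0^2$) using the self-similar covariance relations together with (FV), and then confirming that the trace supplied by Theorem~\ref{exist a semifinite trace} reproduces the prescribed formula on generators, is the substantive technical work of the proof.
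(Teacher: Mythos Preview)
Your reduction to the fixed-point algebra via the intertwined expectations $E,F$, the filtration $\O_{G,\Lambda}^\gamma=\overline{\bigcup_p A_p}$, and the block decomposition $A_p\cong\bigoplus_v A_{p,v}$ all match the paper exactly. The gap is in your construction of the faithful expectation $E^\D$. The inner product you propose on $\A_0$ is pulled back through the representation of \S\ref{SSS:nontriv} on $\ell^2(\Lambda^\infty)$, but that representation need not be faithful when $G$ is non-amenable: on the copy of $C^*(G)$ sitting inside any $A_{p,v}$ (via $g\mapsto s_\mu u_g s_\mu^*$ for a fixed $\mu\in\Lambda^p v$) it acts by permuting basis vectors and hence factors through $C^*_r(G)$. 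Any sesquilinear form coming from $\ell^2(\Lambda^\infty)$ therefore degenerates on $\ker(C^*(G)\twoheadrightarrow C^*_r(G))\ne 0$, so positive-definiteness fails and $\A_0$ is not a Hilbert algebra. Quotienting by the degeneracy merely reproduces the reduced picture you set out to avoid, and the trace from Theorem~\ref{exist a semifinite trace} would then live on the bicommutant of the \emph{image}, not on $\O_{G,\Lambda}^\gamma$. (In the paper the Hilbert-algebra tools are used only in Theorem~\ref{T:nonunital}(iv), under an amenability hypothesis and with positive-definiteness supplied by a faithful graph trace, not by $\ell^2(\Lambda^\infty)$.)

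The paper avoids building $E^\D$ on $\O_{G,\Lambda}^\gamma$ altogether. For each $(p,v)$ it constructs a \emph{surjection} $\rho:K(\ell^2(\Lambda^p v))\otimes_{\max}C^*(G)\twoheadrightarrow A_{p,v}$ and puts the faithful expectation on the \emph{source}: $\Psi(e_{\mu,\nu}\otimes i(g))=\delta_{\mu,\nu}\delta_{g,1_G}e_{\mu,\mu}$, with faithfulness supplied by \cite[Proposition~4.1.9]{BO08}. One checks $(\pi\circ\rho)\circ\Psi=\Phi\circ(\pi\circ\rho)$ and that $\pi\circ\rho$ is injective on the commutative range of $\Psi$ (this needs only hypothesis~(ii)); then \cite[Proposition~3.11]{Kat03} applied to $\pi\circ\rho$ yields injectivity of $\pi$ on $A_{p,v}$ without ever deciding whether $\rho$ is an isomorphism or whether $A_{p,v}$ itself admits a faithful diagonal expectation. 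That lift to the universal object surjecting onto $A_{p,v}$ is the missing idea in your outline.
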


\begin{proof}
Let $F:B \to B^\alpha$ be the expectation mentioned above, and $E$ be the faithful expectation obtained in Section~\ref{SSS:gauge}.
It is obvious that $\pi$ intertwines the two expectations $E$ and $F$: $F \circ \pi=\pi \circ E$. So, in order to prove that $\pi$ is injective, it is enough to show that $\pi$ is injective on $\mathcal{O}_{G,\Lambda}^\gamma$ by \cite[Proposition~3.11]{Kat03}.

For $p \in P$, define $A_p:=\overline{\mathrm{span}}\{s_\mu u_g s_\nu^*:\mu,\nu \in\Lambda^p, s(\mu)=s(\nu)\}$. Then $A_p$ is a C*-subalgebra of $\mathcal{O}_{G,\Lambda}^\gamma$ for all $p \in P$; $A_p \subseteq A_q$ for all $p, q \in P$ with $q-p \in P$; and $\mathcal{O}_{G,\Lambda}^\gamma=\overline{\bigcup_{p \in P}A_p}$. So in order to show that $\pi$ is injective on $\mathcal{O}_{G,\Lambda}^\gamma$, it suffices to prove that $\pi$ is injective on $A_p$ for all $p \in P$.

For $p \in P, v \in \Lambda^0$, define $A_{p,v}:=\overline{\mathrm{span}}\{s_\mu u_g s_\nu^*\in A_p:\mu,\nu \in\Lambda^p v\}$. Then $A_{p,v}$ is a C*-subalgebra of $A_p$ for all $p \in P,v \in \Lambda^0$, and $A_p \cong \oplus_{v \in \Lambda^0}A_{p,v}$ for all $p \in P$. Hence to show that $\pi$ is injective on $A_p$ for all $p \in P$, we only need to prove that $\pi$ is injective on $A_{p,v}$ for all $p \in P$ and $v \in \Lambda^0$.

Fix $p \in P$ and $v \in \Lambda^0$. Denote by $\{e_{\mu,\nu}\}_{\mu,\nu \in \Lambda^p v}$ the standard generators of $K(\ell^2(\Lambda^p v))$. Let $\{i(g)\}_{g \in G}$ be the generating unitaries of $\ca(G)$. There exists a homomorphism $\rho_1:K(\ell^2(\Lambda^p v)) \to A_{p,v}$ such that $\rho_1(e_{\mu,\nu})=s_\mu s_\nu^*$ for all $\mu,\, \nu \in \Lambda^pv$. It follows from Lemma~\ref{operators in multiplier} that
 $\sum_{\mu \in \Lambda^p v}s_\mu u_g s_\mu^* \in UM(A_{p,v})$ for every $g\in G$. So there exists a homomorphism $\rho_2:\ca(G) \to UM(A_{p,v})$ such that $\rho_2(i(g))=\sum_{\mu \in \Lambda^pv}s_\mu u_g s_\mu^*$ for all $g \in G$. One can easily verify that the images of $\rho_1$ and $\rho_2$ commute. By \cite[Theorem~6.3.7]{Mur90} there exists a surjective homomorphism $\rho:K(\ell^2(\Lambda^pv)) \otimes_{\text{max}} \ca(G) \to A_{p,v}$ such that $\rho(e_{\mu,\nu} \otimes i(g))=s_\mu u_g s_\nu^*$ for all $\mu,\, \nu \in \Lambda^pv,\, g \in G$. By Condition~(iii), there exists an expectation $\Phi:\pi(A_{p,v}) \to \ca(\{\pi(s_\mu s_\mu^*):\mu \in \Lambda^p v\})$ such that $\Phi(\pi(s_\mu u_g s_\nu^*))=\delta_{\mu,\nu}\delta_{g,1_G}\pi(s_\mu s_\mu^*)$ for all $\mu,\nu \in \Lambda^p v,g \in G$. By \cite[Proposition~4.1.9]{BO08}, there exists a faithful expectation $\Psi:K(\ell^2(\Lambda^pv)) \otimes_{\text{max}} \ca(G) \to \ca(\{e_{\mu,\mu}:\mu \in \Lambda^p v\})$ such that $\Psi(e_{\mu,\nu} \otimes i(g))=\delta_{\mu,\nu}\delta_{g,1_G}e_{\mu,\mu}$ for all $\mu,\,\nu \in \Lambda^pv,\, g \in G$. It is straightforward to see that $\pi \circ \rho\circ \Psi=\Phi \circ \pi \circ \rho$. Since $\pi(s_v) \neq 0$, one can check that $\pi \circ \rho$ is injective on $\ca(\{s_\mu s_\mu^*:\mu \in \Lambda^p v\})$. So by \cite[Proposition~3.11]{Kat03}, $\pi \circ \rho$ is injective on $A_{p,v}$.
Therefore, $\pi$ is injective.
\end{proof}

\begin{prop}\label{P:OCG}
Let $(G,\Lambda)$ be a self-similar $P$-graph with Property (FV). Then $\O_{G,\Lambda}\cong \ca(\G_{G,\Lambda})$.
\end{prop}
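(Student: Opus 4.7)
The plan is to apply the gauge-invariant uniqueness theorem for self-similar $P$-graph C*-algebras (Theorem~\ref{T:gissP}) to the natural surjection $h_2:\O_{G,\Lambda}\to \ca(\G_{G,\Lambda})$ provided by Proposition~\ref{P:OG}. The three hypotheses of Theorem~\ref{T:gissP} must be verified for $B=\ca(\G_{G,\Lambda})$ and $\pi=h_2$.

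First, I would construct the gauge action on $\ca(\G_{G,\Lambda})$. The second coordinate of an element of $\G_{G,\Lambda}$ defines a continuous cocycle $c:\G_{G,\Lambda}\to Q(P,G)\rtimes_{\T}\fG(P)$; composing with projection onto $\fG(P)$ yields a continuous $\fG(P)$-valued cocycle, which in turn induces a strongly continuous action $\alpha:\widehat{\fG(P)}\to \Aut(\ca(\G_{G,\Lambda}))$ characterized on the generating functions by $\alpha_f(\mathds{1}_{Z(\mu,g,\nu)})=f(d(\mu)-d(\nu))\mathds{1}_{Z(\mu,g,\nu)}$. Comparing with the gauge action $\gamma$ from Section~\ref{SSS:gauge} and the formula $h_2(s_\mu u_g s_\nu^*)=\mathds{1}_{Z(\mu,g,\nu)}$, one sees $h_2$ intertwines $\gamma$ and $\alpha$, giving hypothesis (i). For (ii), since $\Lambda$ is source-free, $v\Lambda^\infty\neq \mt$, so $Z(v,1_G,v)\neq \mt$, hence $h_2(s_v)=\mathds{1}_{Z(v,1_G,v)}\neq 0$.

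For hypothesis (iii), I would use the canonical conditional expectation $\Phi:\ca(\G_{G,\Lambda})\to C_0(\G_{G,\Lambda}^0)$ obtained by composing the regular representation with restriction of functions to the unit space. On generators one computes $\Phi(\mathds{1}_{Z(\mu,g,\nu)})=\mathds{1}_{Z(\mu,g,\nu)\cap \G_{G,\Lambda}^0}$, so the task is to show this set is empty unless $\mu=\nu$ and $g=1_G$, in which case it equals $Z(\mu,1_G,\mu)$. If the intersection is nonempty, there is $x\in s(\nu)\Lambda^\infty$ with $\mu(g\cdot x)=\nu x$, $\T_{d(\mu)}([g|_x])=1_{Q(P,G)}$, and $d(\mu)=d(\nu)$; unique factorization forces $\mu=\nu$ and $g\cdot x=x$, while triviality of $\T_{d(\mu)}([g|_x])$ gives a $p\in P$ with $g|_{x(0,p)}=1_G$; then $g\cdot x(0,p)=x(0,p)$ together with pseudo-freeness (assumed throughout) yields $g=1_G$. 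The Property~(FV) assumption keeps the whole index set $\Lambda\tensor[_g]{\times}{_s}\Lambda=\Lambda\times_s\Lambda$ independent of $g$, matching the hypotheses of Theorem~\ref{T:gissP}. Continuity of $\Phi$ and density of the generating span then show $\Phi(\ca(\G_{G,\Lambda}))\subseteq \ol{\spn}\{h_2(s_\mu s_\mu^*)\}=h_2(\D_\Lambda)$, so hypothesis (iii) holds.

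The main obstacle will be the set-theoretic computation of $Z(\mu,g,\nu)\cap \G_{G,\Lambda}^0$, specifically turning pseudo-freeness (which applies to finite paths) into the conclusion $g=1_G$ from an infinite-path equation. This is where Property~(FV) and pseudo-freeness work together: one reduces to a prefix of $x$ of sufficiently large length so that both $g\cdot\mu'=\mu'$ and $g|_{\mu'}=1_G$ hold simultaneously, and then invokes pseudo-freeness directly. Once these three hypotheses are checked, Theorem~\ref{T:gissP} gives the injectivity of $h_2$, and combined with its surjectivity from Proposition~\ref{P:OG}, yields the desired isomorphism $\O_{G,\Lambda}\cong \ca(\G_{G,\Lambda})$.
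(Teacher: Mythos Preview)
Your proposal is correct and follows essentially the same route as the paper: apply Theorem~\ref{T:gissP} to the surjection of Proposition~\ref{P:OG}, build the gauge action on $\ca(\G_{G,\Lambda})$ from the $\fG(P)$-valued cocycle, and use the restriction expectation onto $C_0(\G_{G,\Lambda}^0)$ together with pseudo-freeness to verify hypothesis~(iii). The paper is terser (it cites \cite[Proposition~II.5.1]{Ren80} and \cite[Proposition~II.4.8]{Ren80} for the action and expectation, and invokes Stone--Weierstrass for $\pi(\D_\Lambda)=C_0(\G_{G,\Lambda}^0)$), while you spell out the computation of $Z(\mu,g,\nu)\cap \G_{G,\Lambda}^0$ explicitly, but the argument is the same.
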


\begin{proof}
By Proposition~\ref{P:OG}, there exists a surjective homomorphism $\pi:\mathcal{O}_{G,\Lambda} \to \ca(\mathcal{G}_{G,\Lambda})$ such that $\pi(s_\mu u_g s_\nu^*)=\mathds{1}_{Z(\mu,g,\nu)}$ for all $g \in G$ and $(\mu,\nu)\in\Lambda\tensor[_g]{\times}{_s}\Lambda$. Let $c:\mathcal{G}_{G,\Lambda} \to \fG(P)$ be the continuous $1$-cocycle defined by
\[
c(\mu (g \cdot x);\T_{d(\mu)}([g \vert_{x}]),d(\mu)-d(\nu);\nu x):=d(\mu)-d(\nu).
\]
By \cite[Proposition~II.5.1]{Ren80}, there exists a strongly continuous homomorphism $\alpha: \widehat{\fG(P)}  \to \Aut(\ca(\mathcal{G}_{G,\Lambda}))$ such that
$\alpha\circ \pi=\pi\circ \gamma$.
It is straightforward to see that $\pi(s_v) \neq 0$ for all $v \in \Lambda^0$. By \cite[Proposition~II.4.8]{Ren80} there exists an expectation $\E:\ca(\mathcal{G}_{G,\Lambda}) \to C_0(\mathcal{G}_{G,\Lambda}^{0})$ such that $\E(f)=f \vert_{\mathcal{G}_{G,\Lambda}^{0}}$ for all $f \in C_c(\mathcal{G}_{G,\Lambda})$. The Stone-Weierstrass theorem implies that $\pi(\mathcal{D}_\Lambda)=C_0(\mathcal{G}_{G,\Lambda}^{0})$. Since $(G,\Lambda)$ is pseudo free, we have $\E(\pi(s_\mu u_g s_\nu^*))=\delta_{\mu,\nu}\delta_{g, 1_G}\pi(s_\mu s_\mu^*)$ for all $g\in G$ and $(\mu,\nu)\in\Lambda\times_s\Lambda$. By Theorem~\ref{T:gissP}, $\pi$ is injective.
\end{proof}

\begin{cor}
\label{C:OOCG}
Let $(G,\Lambda)$ be a self-similar $P$-graph with Property (FV). Then 
\begin{enumeratei}
\item
$\O_{G,\Lambda}\cong\O_{X_{G,\Lambda}}\cong \ca(\G_{G,\Lambda})$;
\item 
$\ca(\G_{G,\Lambda})\cong\ca_r(\G_{G,\Lambda})$, $\O_{G,\Lambda}$ satisfies the UCT, and $\G_{G,\Lambda}$ is amenable, provided $G$ is amenable.
\end{enumeratei}
\end{cor}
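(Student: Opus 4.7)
The plan is to assemble the two statements from earlier results rather than to prove anything substantially new. For (i), the first isomorphism $\O_{G,\Lambda}\cong\O_{X_{G,\Lambda}}$ is precisely Proposition~\ref{P:OO}(i), which already uses only the universal property of $\ca_u(G,\Lambda)$ and does not require Property (FV). The second isomorphism $\O_{G,\Lambda}\cong\ca(\G_{G,\Lambda})$ is Proposition~\ref{P:OCG}, which does use Property (FV) through the Cuntz-Krieger uniqueness theorem, Theorem~\ref{T:gissP}. So (i) is a one-line citation.

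For (ii), the strategy is to reduce everything to amenability of the groupoid $\G_{G,\Lambda}$. First I would prove that $\G_{G,\Lambda}$ is amenable when $G$ is amenable. The natural continuous $1$-cocycle $c:\G_{G,\Lambda}\to\fG(P)\rtimes$ (or rather, one uses the cocycle to an appropriate group combined with the $P$-graph structure) sends $\G_{G,\Lambda}$ to a group, with fibre over the identity being a ``kernel'' groupoid built from the self-similar action on $\Lambda^\infty$ and the $G$-action. Since $\fG(P)$ is a countable discrete abelian group (hence amenable), by \cite[Proposition~5.1.2]{ADR00} it suffices to check amenability of the kernel subgroupoid $c^{-1}(0_{\fG(P)})$. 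That kernel is a Hausdorff étale groupoid whose unit space is $\Lambda^\infty$ and whose arrows come from elements $g\in G$ together with the shift-tail equivalence on $\Lambda^\infty$; when $G$ is amenable, this groupoid is an extension of the AF-like shift-tail groupoid of the $P$-graph by the $G$-action, so amenability is inherited. (Alternatively, by Proposition~\ref{P:OO}(ii) $\O_{G,\Lambda}$ is nuclear, and nuclearity of $\ca(\G_{G,\Lambda})$ for an étale Hausdorff groupoid is equivalent to amenability of the groupoid by \cite[Corollaire~6.2.14]{ADR00}.) The latter route is the cleanest: conclude amenability of $\G_{G,\Lambda}$ directly from nuclearity of $\ca(\G_{G,\Lambda})\cong\O_{G,\Lambda}$, which was established in Proposition~\ref{P:OO}(ii) assuming $G$ is amenable.

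Once amenability of $\G_{G,\Lambda}$ is in hand, the equality $\ca(\G_{G,\Lambda})\cong\ca_r(\G_{G,\Lambda})$ is standard for amenable étale groupoids (see, e.g., \cite[Proposition~6.1.8]{ADR00}). The UCT statement then follows from Tu's theorem that the (reduced) C*-algebra of any second countable amenable Hausdorff étale groupoid satisfies the UCT, combined with the isomorphism $\O_{G,\Lambda}\cong\ca_r(\G_{G,\Lambda})$.

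The only place where one might have to be careful is the appeal to \cite[Corollaire~6.2.14]{ADR00} (or the equivalent) to deduce amenability of $\G_{G,\Lambda}$ from nuclearity of $\ca(\G_{G,\Lambda})$; this requires that $\G_{G,\Lambda}$ be a second countable Hausdorff étale groupoid, but these are already the standing assumptions on our objects (countable $\Lambda$, countable $G$, and the topology making each $Z(\mu,g,\nu)$ a compact open bisection is Hausdorff and étale by the proposition preceding Lemma~\ref{L:G-ape}). With those verifications in place, (ii) follows immediately, and no further computation is needed.
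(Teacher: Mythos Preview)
Your proposal is correct. Part (i) is handled exactly as the paper does it: a one-line combination of Proposition~\ref{P:OO}(i) and Proposition~\ref{P:OCG}. For part (ii), the paper gives no argument at all, simply pointing to \cite[Theorem~6.6(i) and Proposition~6.7]{LY17_2}; the argument there is the cocycle route you sketch first (use the cocycle $c:\G_{G,\Lambda}\to\fG(P)$ already defined in the proof of Proposition~\ref{P:OCG}, reduce to amenability of $c^{-1}(0)$ since $\fG(P)$ is abelian, and then handle the kernel as an increasing union of elementary groupoids built from the amenable group $G$). Your alternative route---deduce amenability of $\G_{G,\Lambda}$ from nuclearity of $\ca(\G_{G,\Lambda})\cong\O_{G,\Lambda}$ via Proposition~\ref{P:OO}(ii) and the Anantharaman-Delaroche equivalence---is cleaner and entirely legitimate, though note that the equivalence is stated for the \emph{reduced} algebra, so you should insert the one extra line that nuclearity passes to the quotient $\ca_r(\G_{G,\Lambda})$. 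One cosmetic point: your cocycle sentence trails off with ``$\fG(P)\rtimes$''; the target is just $\fG(P)$, not a semidirect product.
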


\begin{proof}
Combining Propositions \ref{P:OO} and \ref{P:OCG} yields (i). 
The proof of (ii) is similar to \cite[Theorem~6.6(i) and Proposition~6.7]{LY17_2}.
\end{proof}

We are now ready to state and prove the main result of this subsection. It is a Cuntz-Krieger uniqueness theorem for self-similar $P$-graphs C*-algebras, which generalizes the one for $P$-graphs C*-algebras in \cite[Corollary~2.8]{CKSS14}, and plays a key role in Section~\ref{S:primitive}.

\begin{thm}\label{T:aCKU}
Let $B$ be a C*-algebra, $(G,\Lambda)$ be an aperiodic self-similar $P$-graph with Property (FV), and $\pi:\mathcal{O}_{G,\Lambda} \to B$ be a homomorphism such that $\pi(s_v) \neq 0$ for all $v \in \Lambda^0$. Then $\pi$ is injective.
\end{thm}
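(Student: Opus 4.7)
The natural strategy is to translate Theorem~\ref{T:aCKU} into a Cuntz--Krieger uniqueness result for the groupoid $\G_{G,\Lambda}$. By Proposition~\ref{P:OCG}, Property~(FV) gives an isomorphism $\O_{G,\Lambda}\cong\ca(\G_{G,\Lambda})$ sending $s_\mu u_g s_\nu^*$ to $\mathds{1}_{Z(\mu,g,\nu)}$, and, as the Stone--Weierstrass argument in that proof shows, $\D_\Lambda$ is identified with $C_0(\G_{G,\Lambda}^0)$. Meanwhile, Lemma~\ref{L:G-ape} converts aperiodicity of $(G,\Lambda)$ into topological principality of $\G_{G,\Lambda}$. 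So the theorem reduces to the following assertion: if a homomorphism out of $\ca(\G_{G,\Lambda})$ is nonzero on each unit-space cylinder indicator $\mathds{1}_{Z(v,1_G,v)}$, then it must be injective.

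The first step is to establish injectivity of $\pi$ on the diagonal $\D_\Lambda\cong C_0(\G_{G,\Lambda}^0)$. Since $s_\mu^* s_\mu=s_{s(\mu)}$, the hypothesis $\pi(s_v)\neq 0$ for all $v\in\Lambda^0$ immediately propagates to $\pi(s_\mu)\neq 0$ for every $\mu\in\Lambda$, whence $\pi(s_\mu s_\mu^*)=\pi(\mathds{1}_{Z(\mu,1_G,\mu)})\neq 0$. Since the sets $Z(\mu,1_G,\mu)$ form a basis of compact open subsets of $\G_{G,\Lambda}^0$, a separation-of-points argument then yields that $\pi|_{C_0(\G_{G,\Lambda}^0)}$ is faithful.

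The second step is to invoke Theorem~\ref{CK thm gpoid}: topological principality forces any nonzero ideal of $\ca_r(\G_{G,\Lambda})$ to meet $C_0(\G_{G,\Lambda}^0)$ nontrivially. Composing the given $\pi$ with the canonical quotient $\ca(\G_{G,\Lambda})\to\ca_r(\G_{G,\Lambda})$ and applying this intersection result to $\ker(\pi)$ then contradicts the diagonal injectivity established above, yielding $\ker(\pi)=\{0\}$.

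The main obstacle is bridging the full and reduced groupoid C*-algebras: Theorem~\ref{CK thm gpoid} applies to $\ca_r(\G)$, while Proposition~\ref{P:OCG} only identifies $\O_{G,\Lambda}$ with $\ca(\G_{G,\Lambda})$, and the canonical quotient $\ca(\G_{G,\Lambda})\twoheadrightarrow\ca_r(\G_{G,\Lambda})$ may have nontrivial kernel. When $\G_{G,\Lambda}$ is amenable (in particular, when $G$ is amenable, via Corollary~\ref{C:OOCG}(ii)) this quotient is an isomorphism and the argument closes seamlessly. Outside of amenability one would have to supplement the argument, presumably by exploiting the explicit aperiodicity criterion of Proposition~\ref{P:G-ape} to produce, for each element $a$ of a dense $*$-subalgebra, a projection $p\in\D_\Lambda$ with $pap=p E(a)$ (in the spirit of the proof of Theorem~\ref{T:gissP}), so that $\|\pi(a)\|\geq \|\pi(p E(a))\|$ and the faithfulness of $E$ together with diagonal injectivity of $\pi$ would conclude the proof.
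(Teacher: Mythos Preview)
Your approach coincides with the paper's: use Lemma~\ref{L:G-ape} to convert aperiodicity into topological principality of $\G_{G,\Lambda}$, identify $\O_{G,\Lambda}$ with the groupoid C*-algebra, and then apply Theorem~\ref{CK thm gpoid}. The paper's proof is only four sentences. On the full-versus-reduced obstacle you flag, the paper does not isolate amenability of $G$ as a separate hypothesis (as you suggest via Corollary~\ref{C:OOCG}(ii)); instead it cites Propositions~\ref{P:OO} and \ref{P:OCG} together with \cite[Corollary~5.6.17, Theorem~5.6.18]{BO08} to assert $\O_{G,\Lambda}\cong\ca_r(\G_{G,\Lambda})$ in one stroke, and then applies Theorem~\ref{CK thm gpoid} directly. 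The paper is also terser than you on the diagonal step: it writes only ``Since $\pi(s_v)\neq 0$ for all $v\in\Lambda^0$, by Theorem~\ref{CK thm gpoid} $\pi$ is injective,'' suppressing the propagation from vertex projections to injectivity on all of $C_0(\G_{G,\Lambda}^0)$ that you spell out explicitly.
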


\begin{proof}
Since $(G,\Lambda)$ is $G$-aperiodic, $\mathcal{G}_{G,\Lambda}$ is topologically principal due to Lemma \ref{L:G-ape}. It follows from Propositions~\ref{P:OO}, \ref{P:OCG} and \cite[Corollary~5.6.17, Theorem~5.6.18]{BO08} that $\mathcal{O}_{G,\Lambda} \cong \ca_r(\mathcal{G}_{G,\Lambda})$. Thus we may regard $\pi$ a homomorphism from $\ca_r(\mathcal{G}_{G,\Lambda})$ to $B$. Since $\pi(s_v) \neq 0$ for all $v \in \Lambda^0$, by Theorem~\ref{CK thm gpoid} $\pi$ is injective.
\end{proof}

\subsection{Properties of self-similar $k$-graphs C*-algebras -- from unital to nonunital}

The main purpose of this subsection is to study the properties of (not necessarily unital) self-similar $k$-graph C*-algebras.

The theorem below can be proved completely similar to \cite[Theorem~5.10]{LY17_2}. 

\begin{thm}\label{T:OOG}
Let $(G,\Lambda)$ be a self-similar $k$-graph with $G$ amenable.
Then $\mathcal{O}_{G,\Lambda} \cong \mathcal{O}_{X_{G,\Lambda}}  \cong \ca(\mathcal{G}_{G,\Lambda}) \cong \ca_r(\G_{G,\Lambda})$.
\end{thm}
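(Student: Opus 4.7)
The plan is to mirror the strategy of \cite[Theorem~5.10]{LY17_2}, packaging the three isomorphisms as successive applications of results already in hand, with the only new technical burden being the generalization from finite to countable $\Lambda^0$, which is handled by moving from the algebra itself to its multiplier algebra via Lemma~\ref{operators in multiplier} (and the discussion in Remark~\ref{R:ssfamily}).

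The first isomorphism $\mathcal{O}_{G,\Lambda}\cong\mathcal{O}_{X_{G,\Lambda}}$ is already established by Proposition~\ref{P:OO}(i), with no restriction on $|\Lambda^0|$ or on amenability. For the second isomorphism $\mathcal{O}_{G,\Lambda}\cong \ca(\mathcal{G}_{G,\Lambda})$, I would start from the surjective $*$-homomorphism $\pi:\mathcal{O}_{G,\Lambda}\to\ca(\mathcal{G}_{G,\Lambda})$ furnished by Proposition~\ref{P:OG}, and prove injectivity by applying the gauge-invariant uniqueness theorem for product systems (Theorem~\ref{T:GIUCLSV}) to the composite $\mathcal{O}_{X_{G,\Lambda}}\cong\mathcal{O}_{G,\Lambda}\xrightarrow{\pi}\ca(\mathcal{G}_{G,\Lambda})$. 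The required gauge action on $\ca(\mathcal{G}_{G,\Lambda})$ is induced by the continuous $\bZ^{k}$-valued cocycle
\[
c\bigl(\mu(g\cdot x);\T_{d(\mu)}([g\vert_{x}]),d(\mu)-d(\nu);\nu x\bigr):=d(\mu)-d(\nu)
\]
via \cite[Proposition~II.5.1]{Ren80}, and by construction $\pi$ intertwines it with the gauge action $\gamma$ of $\mathcal{O}_{G,\Lambda}$. What remains for Theorem~\ref{T:GIUCLSV} is injectivity on the coefficient algebra $A_{G,\Lambda}=C_{0}(\Lambda^{0})\rtimes G$; this is verified by checking that on each vertex the composition recovers a regular representation of $A_{G,\Lambda}$, using that $G$ is amenable (so full equals reduced) and that the generators $\mathds{1}_{Z(v,g,g^{-1}\cdot v)}$, assembled as multipliers by Lemma~\ref{operators in multiplier}, form a faithful covariant pair on $\Lambda^{\infty}$.

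For the third isomorphism $\ca(\mathcal{G}_{G,\Lambda})\cong\ca_{r}(\mathcal{G}_{G,\Lambda})$, the plan is to show that $\mathcal{G}_{G,\Lambda}$ is amenable as an étale groupoid whenever $G$ is, and to conclude that the canonical surjection from the full to the reduced groupoid C*-algebra is an isomorphism. Amenability of $\mathcal{G}_{G,\Lambda}$ is seen from its structure as a skew-product extension: the quotient cocycle landing in the amenable group $\bZ^{k}\rtimes G$ (or equivalently the description of $\mathcal{G}_{G,\Lambda}$ as built from the $k$-graph path groupoid of $\Lambda$ — amenable because $\bZ^{k}$ is — together with the $G$-action by amenable groupoid automorphisms) yields amenability via the standard two-out-of-three result for extensions. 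Alternatively, one can quote \cite[Theorem~3.21]{AM15} together with Proposition~\ref{P:OO}(ii) to ensure nuclearity of $\mathcal{O}_{X_{G,\Lambda}}$ and then deduce amenability of $\mathcal{G}_{G,\Lambda}$ from the Cuntz-Pimsner model identified in step two.

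The main obstacle is verifying injectivity of the coefficient-algebra representation in the application of Theorem~\ref{T:GIUCLSV} when $\Lambda^{0}$ is infinite: without a unit in $\mathcal{O}_{G,\Lambda}$, the unitaries $U_g$ only live in $M(\ca(\mathcal{G}_{G,\Lambda}))$, so one must carefully show that the ``vertex-by-vertex'' fibred covariant representation of $C_{0}(\Lambda^{0})\rtimes G$ on $\Lambda^{\infty}$ assembled via Lemma~\ref{operators in multiplier} is faithful — this is where amenability of $G$ is crucial. Once that is done, the gauge-invariant uniqueness theorem gives the injectivity of $\pi$, and combined with the amenability of $\mathcal{G}_{G,\Lambda}$ all four C*-algebras in the statement are identified.
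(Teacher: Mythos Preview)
Your proposal is essentially correct and matches the paper's own approach, which simply defers to \cite[Theorem~5.10]{LY17_2}: both proceed via Proposition~\ref{P:OO} for the first isomorphism, then Proposition~\ref{P:OG} combined with the gauge-invariant uniqueness Theorem~\ref{T:GIUCLSV} (which the paper repackages as Proposition~\ref{P:GUIT}) for the second, with injectivity on $A_{G,\Lambda}=C_0(\Lambda^0)\rtimes G$ checked using amenability of $G$, and finally groupoid amenability for the third. One small imprecision worth flagging: the cocycle $c$ you write down takes values in $\bZ^{k}$, not in $\bZ^{k}\rtimes G$; the amenability of $\mathcal{G}_{G,\Lambda}$ in \cite{LY17_2} is obtained by iterating the extension argument (first via $c$ to $\bZ^{k}$, then via a further cocycle on $c^{-1}(0)$ built from the $Q(\bN^{k},G)$-coordinate), rather than through a single cocycle to a semidirect product.
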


In the following gauge-invariant uniqueness type result, we identify $\O_{G,\Lambda}$ with $\O_{X_{G,\Lambda}}$ via $\varPi$ used in the proof of Proposition \ref{P:OO}. 

\begin{prop}\label{P:GUIT}
Let $(G,\Lambda)$ be a self-similar $k$-graph, let $B$ be a C*-algebra, and let $\pi:\mathcal{O}_{G,\Lambda} \to B$ be a surjective homomorphism. Suppose that
\begin{enumeratei}
\item there exists a strongly continuous homomorphism $\alpha:\bT^k \to \Aut(B)$ such that $\alpha_z(\pi(s_\mu u_g s_\nu^*))=z^{d(\mu)-d(\nu)}\pi(s_\mu u_g s_\nu^*)$ for all $z \in \bT^k$, $g\in G$, $(\mu,\nu)\in\Lambda\tensor[_g]{\times}{_s}\Lambda$;
\item $\pi$ is injective on $A_{G,\Lambda}$.
\end{enumeratei}
Then $\pi$ is injective. 
\end{prop}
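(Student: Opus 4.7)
The plan is to deduce this from the gauge-invariant uniqueness theorem for product-system Cuntz--Pimsner algebras, Theorem~\ref{T:GIUCLSV}, via the isomorphism $\O_{G,\Lambda}\cong\O_{X_{G,\Lambda}}$ established in Proposition~\ref{P:OO}(i).

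First I would use the isomorphism $\varPi$ from the proof of Proposition~\ref{P:OO} to identify $\O_{G,\Lambda}$ with $\O_{X_{G,\Lambda}}$, so that $\pi$ becomes a surjective homomorphism $\O_{X_{G,\Lambda}}\to B$. Setting $\widetilde\psi:=\pi\circ j_X$, where $j_X$ denotes the universal Cuntz--Pimsner covariant representation of $X_{G,\Lambda}$, yields a Cuntz--Pimsner covariant representation of $X_{G,\Lambda}$ into $B$ with $C^*(\widetilde\psi(X_{G,\Lambda}))=B$ (by surjectivity of $\pi$); moreover, $\pi$ is then precisely the homomorphism induced from the universal property of $\O_{X_{G,\Lambda}}$ applied to $\widetilde\psi$. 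The remaining task is to verify the two hypotheses of Theorem~\ref{T:GIUCLSV}.

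For the gauge compatibility, under the identification each $\chi_\mu\in X_{G,\Lambda,p}$ (with $p\neq 0_P$) corresponds to $s_\mu=s_\mu u_{1_G} s_{s(\mu)}^*\in\O_{G,\Lambda}$, while each $i(\delta_v)i(g)\in A_{G,\Lambda}=X_{G,\Lambda,0_P}$ corresponds to $s_v u_g=s_v u_g s_{g^{-1}\cdot v}^*$; the degree differences computing the exponent in hypothesis~(i) are $d(\mu)-d(s(\mu))=p$ and $d(v)-d(g^{-1}\cdot v)=0$, respectively. Hence hypothesis~(i) shows that $\alpha_z(\widetilde\psi_p(\chi_\mu))=z^p\widetilde\psi_p(\chi_\mu)$ and $\widetilde\psi_0(A_{G,\Lambda})\subseteq B^\alpha$. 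Since $\widetilde\psi$ is a Toeplitz representation and $X_{G,\Lambda,p}$ is generated as a right $A_{G,\Lambda}$-module by the $\chi_\mu$, this extends to $\alpha_z\circ\widetilde\psi_p=z^p\widetilde\psi_p$ on all of $X_{G,\Lambda,p}$. For injectivity of $\widetilde\psi_0$, observe that $\widetilde\psi_0=\pi\circ j_{X,0_P}$ agrees, under the identification, with $\pi$ restricted to the embedded image of $A_{G,\Lambda}$ in $\O_{G,\Lambda}$, which is injective by hypothesis~(ii).

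Both hypotheses of Theorem~\ref{T:GIUCLSV} are thus satisfied, and an application yields that $\pi$ is an isomorphism, and in particular injective. The main bookkeeping point is matching the coefficient algebra $A_{G,\Lambda}$ under its two guises -- as a subalgebra of $\O_{G,\Lambda}$ via $i(\delta_v)i(g)\mapsto s_v u_g$ and as the zero fiber of the product system $X_{G,\Lambda}$ -- so that hypotheses~(i) and~(ii) of the proposition translate precisely into the hypotheses of Theorem~\ref{T:GIUCLSV}; once this identification is in hand, the conclusion follows directly from the Cuntz--Pimsner gauge-invariant uniqueness theorem.
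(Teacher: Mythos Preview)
Your proposal is correct and follows exactly the same approach as the paper, which states only that the result ``follows immediately from Theorem~\ref{T:GIUCLSV} and Proposition~\ref{P:OO}.'' You have simply spelled out the bookkeeping that the paper leaves implicit: identifying $\O_{G,\Lambda}$ with $\O_{X_{G,\Lambda}}$ via $\varPi$, recognizing $\pi\circ j_X$ as the Cuntz--Pimsner covariant representation to which Theorem~\ref{T:GIUCLSV} applies, and matching hypotheses~(i) and~(ii) of the proposition to the gauge-compatibility and injectivity-on-coefficients hypotheses of that theorem.
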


\begin{proof}
It follows immediately from Theorem~\ref{T:GIUCLSV} and Proposition~\ref{P:OO}.
\end{proof}

Now we can prove the nonunital version of \cite[Theorems 6.6, 6.13]{LY17_2}.

\begin{thm}\label{T:nonunital}
Let $(G,\Lambda)$ be a self-similar $k$-graph with $G$ amenable. Then
\begin{enumeratei}
\item $\mathcal{O}_{G,\Lambda}$ is nuclear;
\item $\mathcal{O}_{G,\Lambda}$ satisfies the UCT;
\item $\mathcal{O}_{G,\Lambda}$ is simple if and only if $(G,\Lambda)$ is cofinal and aperiodic;
\item if $\mathcal{O}_{G,\Lambda}$ is simple, then $\mathcal{O}_{G,\Lambda}$ is stably finite if and only if $\Lambda$ has faithful graph traces;
\item if $\mathcal{O}_{G,\Lambda}$ is simple and $S(\mathcal{G}_{G,\Lambda})$ is almost unperforated, then $\mathcal{O}_{G,\Lambda}$ is purely infinite if and only if $\Lambda$ has no faithful graph traces.
\end{enumeratei}
\end{thm}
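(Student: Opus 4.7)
The overall strategy is to reduce each statement to the groupoid model $\ca(\G_{G,\Lambda})$ given by Theorem~\ref{T:OOG} and then either quote a theorem from the preliminaries or mimic, mutatis mutandis, the corresponding unital argument of \cite{LY17_2}. Part (i) is immediate from Proposition~\ref{P:OO}(ii), since $G$ is amenable. Part (ii) follows by combining Theorem~\ref{T:OOG}, which gives $\O_{G,\Lambda}\cong\ca_r(\G_{G,\Lambda})$ via the amenability argument already used in the proof of Corollary~\ref{C:OOCG}(ii), with Tu's theorem that the reduced C*-algebra of a second-countable amenable Hausdorff ample groupoid satisfies the UCT.

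For (iii), by Theorem~\ref{T:OOG} we may work with $\ca(\G_{G,\Lambda})$. The standard simplicity theorem for ample groupoid C*-algebras (Hausdorff, amenable) says that $\ca(\G_{G,\Lambda})$ is simple iff $\G_{G,\Lambda}$ is both topologically principal and minimal. By Lemma~\ref{L:G-ape}, the former is equivalent to aperiodicity of $(G,\Lambda)$. For the latter, I would show directly that minimality of $\G_{G,\Lambda}$ is equivalent to cofinality of $(G,\Lambda)$: saturating the open invariant subsets of $\G_{G,\Lambda}^0=\Lambda^\infty$ under the basic bisections $Z(\mu,g,\nu)$, one sees that nontrivial invariant open sets exist iff some vertex $v$ fails to be reached (modulo the $G$-action) from some infinite path, exactly as in the unital argument of \cite[Proposition~6.12]{LY17_2}. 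The only nuance is that $\Lambda^0$ may now be infinite, but the ``one vertex at a time'' formulation of cofinality from the definition in Section~\ref{S:ssP} handles this uniformly.

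Parts (iv) and (v) both reduce to the Rainone-Sims dichotomy, Theorem~\ref{T:RS}, applied to $\ca(\G_{G,\Lambda})$; note that since $\Lambda^0$ need not be finite, the unit space $\G_{G,\Lambda}^0\cong\Lambda^\infty$ need not be compact, placing us squarely in the noncompact case of that theorem. The key step is to set up the bijection between faithful graph traces on $\Lambda$ and faithful semifinite lower semicontinuous traces on $\ca(\G_{G,\Lambda})$ that are finite on characteristic functions of compact open subsets of $\G_{G,\Lambda}^0$. Given a faithful graph trace $\ft$, I would define a $\ft$-weighted inner product on the dense $*$-subalgebra generated by $\{s_\mu u_g s_\nu^*:(\mu,\nu)\in\Lambda\times_s\Lambda,\,g\in G\}$, verify the axioms of a Hilbert algebra in the sense of Section~\ref{S:Pre}, and invoke Theorem~\ref{exist a semifinite trace} together with Lemma~\ref{densely def lower semicont trace is semifin} to obtain a faithful semifinite lower semicontinuous trace, whose values on compact open bisections of the unit space are positive and finite by construction. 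Conversely, the restriction of any such trace to $\{s_v:v\in\Lambda^0\}$ yields a faithful graph trace via the Cuntz-Krieger relation. Then Theorem~\ref{T:RS} gives (iv), and its second part (together with the almost unperforated hypothesis) gives the either/or statement used in (v).

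The main obstacle is the nonunital construction of the faithful semifinite trace from a faithful graph trace. In the unital case of \cite{LY17_2}, one simply writes down a finite tracial state on $\ca_u(G,\Lambda)$, but in the nonunital setting the trace is only semifinite, so one cannot proceed by continuity alone; this is exactly why the Hilbert-algebra framework of Section~\ref{S:Pre} is invoked. Checking that the candidate inner product is positive definite and that left multiplication is bounded ultimately boils down to the graph trace identity $\ft(v)=\sum_{\mu\in v\Lambda^p}\ft(s(\mu))$ combined with the self-similarity conditions (i) and (ii) of Definition~\ref{D:ssP}, just as in the unital case; but the verification of density of $\spn\A^2$ and of the semifiniteness of the resulting trace, and matching it against the compact open bisections $Z(\mu,g,\nu)$, has to be redone carefully by hand.
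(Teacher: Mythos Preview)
Your plan is essentially the paper's own proof: parts (i)--(iii) are handled exactly as you describe, and for (iv)--(v) the paper likewise builds a Hilbert algebra from a faithful graph trace via $\tau:=\phi\circ E$ and $\langle a,b\rangle:=\tau(b^*a)$, invokes Theorem~\ref{exist a semifinite trace} and Lemma~\ref{densely def lower semicont trace is semifin}, and then applies Theorem~\ref{T:RS}. Two small points to tighten: first, the paper disposes of the case $|\Lambda^0|<\infty$ at the outset by citing \cite{LY17_2}, so that Theorem~\ref{T:RS} (which is stated for noncompact unit space) is only invoked when $|\Lambda^0|=\infty$; your ``need not be compact'' phrasing should become an explicit case split. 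Second, the faithful semifinite trace from Theorem~\ref{exist a semifinite trace} lives on $\mathcal{A}''$, not on $\mathcal{O}_{G,\Lambda}$; the paper uses simplicity of $\mathcal{O}_{G,\Lambda}$ to ensure the left-regular representation $\pi:\mathcal{O}_{G,\Lambda}\to B(\mathcal{H})$ is injective, so that $\mathcal{O}_{G,\Lambda}$ genuinely embeds in $\mathcal{A}''$ and the restricted trace is faithful---you should make this use of the simplicity hypothesis explicit.
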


\begin{proof}
When $|\Lambda^0|<\infty$, the theorem is already showed in \cite[Theorems~6.6 and 6.13]{LY17_2}. So, in the sequel, we only need to prove it when $|\Lambda^0|=\infty$. 

(i) is from Proposition \ref{P:OO}; and (ii)-(iii) are proved similar to \cite[Theorem 6.6]{LY17_2}.

For (iv), assume that $\mathcal{O}_{G,\Lambda}$ is simple. Suppose that $\mathcal{O}_{G,\Lambda}$ is stably finite. By Theorem~\ref{T:RS}, there exists a faithful semifinite trace $\tau$ on $\mathcal{O}_{G,\Lambda}$ such that $0<\tau(s_\mu s_\mu^*)<\infty$ for all $\mu \in \Lambda$. Define $\ft:\Lambda^0 \to [0,\infty)$ by $\ft(v):=\tau(s_v)$ for all $v \in \Lambda^0$. So $\ft$ is a faithful graph trace.

Conversely, suppose that $\Lambda$ has a faithful graph trace $\ft$. Then there exists a linear functional $\phi:\mathrm{span}\{s_\mu s_\mu^*:\mu \in \Lambda\} \to \mathbb{C}$ satisfying $\phi(s_\mu s_\mu^*)=\ft(s(\mu))$ for all $\mu\in\Lambda$. Denote by $\mathcal{A}$ the $*$-subalgebra of $\mathcal{O}_{G,\Lambda}$ generated by $\{s_\mu u_g s_\nu^*:g\in G, (\mu,\nu)\in \Lambda\tensor[_g]{\times}{_s}\Lambda\}$. It follows from Theorem~\ref{T:OOG} that there exists a linear map $E:\mathcal{A} \to \mathrm{span}\{s_\mu s_\mu^*:\mu \in \Lambda\}$ such that $E(s_\mu u_g s_\nu^*)=\delta_{\mu,\nu}\delta_{g,1_G}s_\mu s_\mu^*$ for all $g\in G$ and $(\mu,\nu)\in \Lambda\tensor[_g]{\times}{_s} \Lambda$. So $\tau:=\phi \circ E$ is a linear functional on $\mathcal{A}$ such that $\tau(ab)=\tau(ba)$ for all $a,b \in \mathcal{A}$. Define $\langle\cdot,\cdot\rangle:\mathcal{A} \times \mathcal{A} \to \mathbb{C}$ by $\langle a,b\rangle:=\tau(b^*a)$. It is straightforward to check that $\langle\cdot,\cdot\rangle$ is an inner product (the faithfulness of $\ft$ guarantees the axiom $\langle a,a\rangle=0 \implies a=0$) and $\mathcal{A}$ becomes a Hilbert algebra. Denote by $\mathcal{H}$ the completion of $\mathcal{A}$ as an inner product space. For $\mu\in \Lambda,g \in G,a \in \mathcal{A}$, define $S_\mu a:=s_\mu a$ and $U_ga:=u_g a$.
This yields a representation $\pi: \O_{G,\Lambda}\to B(\H)$.
Since $\mathcal{O}_{G,\Lambda}$ is simple, $\mathcal{O}_{G,\Lambda}$ embeds in $\mathcal{A}''$. By Theorem~\ref{exist a semifinite trace}, there exists an ultraweakly lower semicontinuous trace $h$ on $\mathcal{A}''$ which is finite on $a^*a$ for all $a \in \mathcal{A}$. Then $h$ restricts to a densely defined, lower semicontinuous trace (hence semifinite due to Lemma~\ref{densely def lower semicont trace is semifin}) on $\mathcal{O}_{G,\Lambda}$. We deduce that $h$ is faithful because $\mathcal{O}_{G,\Lambda}$ is simple. Therefore $\mathcal{O}_{G,\Lambda}$ is stably finite by Theorem~\ref{T:RS}.

Finally, (v) follows from (iv) and Theorem~\ref{T:RS}.
\end{proof}

\section{Gauge- and Diagonal-Invariant Ideals of Self-Similar $k$-Graph C*-Algebras}
\label{S:gdinv}

In this section, we characterize both gauge- and diagonal-invariant ideals of self-similar $k$-graph C*-algebras (see Definition~\ref{D:hsgdi}). Our approach is inspired by \cite{RSY03}.

Let $(G,\Lambda)$ be a self-similar $k$-graph. By Theorem~\ref{T:OOG} there is a faithful expectation $\E_{G,\Lambda}:\mathcal{O}_{G,\Lambda} \to \mathcal{D}_{\Lambda}$  such that
\begin{align}
\label{E:EGLambda}
\E_{G,\Lambda}(s_\mu u_g s_\nu^*)=\delta_{\mu,\nu} \delta_{g,1_G} s_\mu s_\mu^*\qforal g\in G,\, (\mu,\nu)\in \Lambda\tensor[_g]{\times}{_s}\Lambda.
\end{align}

\begin{defn}\label{D:hsgdi}
Let $(G,\Lambda)$ be a self-similar $k$-graph.  A subset $H$ of $\Lambda^0$ is said to be
\begin{itemize}
\item \emph{$G$-hereditary} if $G \cdot H \subseteq H$ and $s(H \Lambda) \subseteq H$;
\item \emph{$G$-saturated} if $G \cdot H \subseteq H$ and $r(\Lambda H)\subseteq H$.
\end{itemize}
Let $\gamma$ be the gauge action of $\O_{G,\Lambda}$ (see Subsection~\ref{SSS:gauge}). An ideal $I$ of $\mathcal{O}_{G,\Lambda}$ is said to be
\begin{itemize}
\item \emph{gauge-invariant} if $\gamma_z(I) \subseteq I$ for all $z \in \bT^k$;
\item \emph{diagonal-invariant} if $\E_{G,\Lambda}(I) \subseteq I$.
\end{itemize}
\end{defn}

\medskip
Let $(G,\Lambda)$ be a self-similar $k$-graph. For a given $G$-hereditary subset $H$ of $\Lambda^0$,  let $I(H)$ denote the ideal of $\mathcal{O}_{G,\Lambda}$ generated by $\{s_v:v \in H\}$. It is not hard to check that $I(H)=\overline{\mathrm{span}}\{s_\mu u_g s_\nu^*:s(\mu),s(\nu) \in H\}$, and that $I(H)$ is both gauge- and diagonal-invariant. Conversely, for a given gauge- and diagonal-invariant ideal $I$ of $\mathcal{O}_{G,\Lambda}$, let $H(I):=\{v \in \Lambda^0:s_v \in I\}$. Then $H(I)$ is $G$-hereditary and $G$-saturated.

The aim of this section is to show that there is a one-to-one correspondence between the set of all $G$-hereditary and $G$-saturated subsets of $\Lambda^0$ and the set of all gauge- and diagonal-invariant ideals of $\O_{G,\Lambda}$.
For this, we need some preparation.

\begin{lem}\label{L:emb}
Let $(G,\Lambda)$ be a self-similar $k$-graph and let $H$ be a $G$-hereditary subset of $\Lambda^0$. Then $(G,H\Lambda )$ is a pseudo free self-similar $k$-graph. Moreover, $\mathcal{O}_{G,H \Lambda }$ naturally embeds in $\mathcal{O}_{G,\Lambda}$, and $\mathcal{O}_{G,H \Lambda }$ is a full corner of $I(H)$.
\end{lem}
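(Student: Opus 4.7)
The plan is to break the statement into three pieces: (a) $(G,H\Lambda)$ is a pseudo free self-similar $k$-graph, (b) there is an injective $*$-homomorphism $\iota:\mathcal{O}_{G,H\Lambda}\hookrightarrow \mathcal{O}_{G,\Lambda}$ whose image sits inside $I(H)$, and (c) the image of $\iota$ is a full corner of $I(H)$. For (a), I would just verify, coordinate by coordinate, that the structure maps restrict correctly. Concretely, $H\Lambda$ is a row-finite source-free sub-$k$-graph of $\Lambda$ because $H$ is $G$-hereditary (so $s(H\Lambda)\subseteq H$) and $\Lambda$ is source-free; the $G$-action on $\Lambda$ restricts to $H\Lambda$ because $G\cdot H\subseteq H$; and conditions (i)--(vii) of Definition~\ref{D:ssP}, together with pseudo freeness, are inherited from $(G,\Lambda)$ because every relevant equality already holds in $\Lambda$.

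For (b) and (c), I would work in $M(I(H))$. The candidate projection is $P:=\sum_{v\in H}s_v$, which I would show converges strictly in $M(I(H))$ using Lemma~\ref{operators in multiplier}, by checking convergence on the standard generators $s_\mu u_g s_\nu^*$ of $I(H)$ (where $s(\mu),s(\nu)\in H$). Because $G\cdot H=H$ and $g\vert_v=g$ for every vertex, a direct computation gives $Pu_g=u_gP$ for every $g\in G$, and $Ps_\mu=s_\mu=s_\mu P$ for every $\mu\in H\Lambda$. I would then define a self-similar $(G,H\Lambda)$-family inside the unital C*-algebra $PM(I(H))P$ by
\[
S_\mu:=s_\mu\quad(\mu\in H\Lambda),\qquad U_g:=u_gP\quad(g\in G).
\]
Checking the axioms of Definition~\ref{D:ssfamily} amounts to unwinding $U_gS_\mu=u_gPs_\mu=u_gs_\mu=s_{g\cdot\mu}u_{g\vert_\mu}=S_{g\cdot\mu}U_{g\vert_\mu}$ (using that $g\cdot\mu\in H\Lambda$), and checking that $\{S_\mu\}$ is a Cuntz--Krieger $H\Lambda$-family, which follows because $v\Lambda^p=v(H\Lambda)^p$ whenever $v\in H$. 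The universal property from Subsection~\ref{SSS:universal} then produces a $*$-homomorphism $\iota:\mathcal{O}_{G,H\Lambda}\to PM(I(H))P$, and by computing spans one sees immediately that $\iota(\mathcal{O}_{G,H\Lambda})=PI(H)P$. Fullness of this corner is cheap: any generator $s_\mu u_g s_\nu^*$ of $I(H)$ factors as $(s_\mu)\cdot P\cdot(s_{s(\mu)}u_gs_\nu^*)\in I(H)PI(H)$ since $s(\mu)\in H$.

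The main obstacle is the injectivity of $\iota$, for which I would invoke the gauge-invariant uniqueness theorem of Proposition~\ref{P:GUIT}. The gauge hypothesis is immediate: the gauge action $\gamma$ on $\mathcal{O}_{G,\Lambda}$ fixes every $s_v$ and every $u_g$, hence fixes $P$, and therefore descends to a strongly continuous $\mathbb{T}^k$-action on $PM(I(H))P$ that restricts to the gauge action on $PI(H)P$ and intertwines correctly with $\gamma$ on $\mathcal{O}_{G,H\Lambda}$ via $\iota$. The delicate hypothesis is injectivity on the coefficient algebra $A_{G,H\Lambda}=C_0(H)\rtimes G$. I would factor $\iota\vert_{A_{G,H\Lambda}}$ as $C_0(H)\rtimes G\hookrightarrow C_0(\Lambda^0)\rtimes G=A_{G,\Lambda}\hookrightarrow\mathcal{O}_{G,\Lambda}$, where the first arrow is injective because $C_0(H)$ is a $G$-invariant ideal of $C_0(\Lambda^0)$ and $G$ is discrete, so the full crossed product preserves this ideal inclusion, and the second arrow is the canonical embedding implicit in the hypothesis of Proposition~\ref{P:GUIT}. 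This completes the argument that $\iota$ is an isometric isomorphism onto $PI(H)P$, which is simultaneously the claimed embedding into $\mathcal{O}_{G,\Lambda}$ and the claimed full corner of $I(H)$.
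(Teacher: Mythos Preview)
Your proof is correct and follows essentially the same route as the paper: construct the map via the universal property, verify the gauge hypothesis, check injectivity on the coefficient algebra to invoke Proposition~\ref{P:GUIT}, and realise the image as the full corner $PI(H)P$ with $P=\sum_{v\in H}s_v$. The only substantive difference is how you verify injectivity on $A_{G,H\Lambda}=C_0(H)\rtimes G$: the paper uses the faithful conditional expectations $E_1:C_0(H)\rtimes G\to C_0(H)$ and $E_2:C_0(\Lambda^0)\rtimes G\to C_0(\Lambda^0)$, checks that $\pi$ intertwines them, observes that $\pi$ is injective on $C_0(H)$, and then appeals to Katsura's \cite[Proposition~3.11]{Kat03}; you instead factor through the inclusion $C_0(H)\rtimes G\hookrightarrow C_0(\Lambda^0)\rtimes G$ (injective because $C_0(H)$ is a $G$-invariant ideal) followed by $A_{G,\Lambda}\hookrightarrow\mathcal{O}_{G,\Lambda}$. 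Both arguments work, and yours is arguably more conceptual.

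One small caveat: your phrase ``the canonical embedding implicit in the hypothesis of Proposition~\ref{P:GUIT}'' is not quite accurate. Hypothesis~(ii) of Proposition~\ref{P:GUIT} asks that a given $\pi$ be injective on $A_{G,\Lambda}$; it does not assert that $A_{G,\Lambda}$ itself embeds in $\mathcal{O}_{G,\Lambda}$. That embedding is a separate (standard) fact---it follows, for instance, from Proposition~\ref{P:OO} together with injectivity of the left actions $\phi_{G,\Lambda,p}$, or from the paper's own expectation argument applied to the identity map---but you should cite it rather than fold it into the hypothesis.
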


\begin{proof}
The first part is straightforward to verify. We prove the ``Moreover" part only.
Let  $\{u,s\}$ and $\{w, t\}$ be the universal self-similar $(G,\Lambda)$-family and $(G,H\Lambda)$-family, respectively.
Notice that restricted to $H\Lambda$, $\{u,s\}$ is also a self-similar $(G,H\Lambda)$-family (cf.~Remark~\ref{R:ssfamily} (ii)). So by the universal property of $\O_{G,H\Lambda}$, there is a homomorphism $\pi: \O_{G,H\Lambda}\to \O_{G,\Lambda}$ such that $\pi(w_g)=u_g$ and $\pi(t_\mu)=s_\mu$ for all $g\in G$ and $\mu\in H\Lambda$.
The universal property of $\O_{G,H \Lambda}$ yields a strongly continuous homomorphism $\alpha:\bT^k \to \Aut(\pi(\mathcal{O}_{G,H \Lambda }))$ such that $\alpha_z(\pi(t_\mu w_g t_\nu^*))=z^{d(\mu)-d(\nu)}\pi(t_\mu w_g t_\nu^*)$ for all $z \in \bT^k,\ \mu,g \in G$, $(\mu, \nu)\in H\Lambda\tensor[_g]{\times}{_s}H\Lambda$. By \cite[Proposition~4.1.9]{BO08}, there exist faithful expectations $E_1:C_0(H)\rtimes G \to C_0(H)$ and $E_2:C_0(\Lambda^0) \rtimes G \to C_0(\Lambda^0)$ such that $E_1(t_v w_g)=\delta_{g,1_G}t_v,E_2(s_{v'}u_{g'})=\delta_{g',1_G}s_{v'}$ for all $v \in H,v' \in \Lambda^0,g,g' \in G$. Since $\pi \circ E_1=E_2 \circ \pi$ and $\pi$ is injective on $C_0(H)$, we deduce that $\pi$ is injective on $C_0(H) \rtimes G$. By Proposition~\ref{P:GUIT}, $\pi$ is injective.

By Lemma~\ref{operators in multiplier}, $p:=\sum_{v \in H}s_v$ is a projection of $M(I(H))$. It is straightforward to check that $pI(H)p=\mathcal{O}_{G,H \Lambda }$ and $\overline{\mathrm{span}}I(H)pI(H)=I(H)$. Hence $\mathcal{O}_{G,H \Lambda }$ is a full corner of $I(H)$.
\end{proof}

\begin{lem}\label{L:HIH}
Let $(G,\Lambda)$ be a self-similar $k$-graph, $H$ be a $G$-hereditary and $G$-saturated subset of $\Lambda^0$, and $v \in \Lambda^0$. Then $v \in H$ if and only if $s_v \in I(H)$.
\end{lem}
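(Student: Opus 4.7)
The direction $v \in H \Rightarrow s_v \in I(H)$ is immediate from the definition of $I(H)$. For the converse, the plan is to construct a $*$-representation $\pi \colon \O_{G,\Lambda} \to B(\H)$ that annihilates $s_v$ for every $v \in H$ but is nonzero on $s_v$ for every $v \in \Lambda^0 \setminus H$. Since $I(H)$ is generated by $\{s_w : w \in H\}$, this will force $I(H) \subseteq \ker\pi$, so that $s_v \in I(H)$ yields $v \in H$.

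The first step I would record is a structural observation: the combined $G$-hereditary ($s(H\Lambda) \subseteq H$) and $G$-saturated ($r(\Lambda H) \subseteq H$) hypotheses imply that for every $\mu \in \Lambda$, both $r(\mu)$ and $s(\mu)$ lie in the same part of the partition $\Lambda^0 = H \sqcup H^c$, where $H^c := \Lambda^0 \setminus H$. In particular $H^c$ is also closed under sources and ranges through $\Lambda$, and since $G \cdot H \subseteq H$ forces $G \cdot H^c = H^c$, the set $X := \{x \in \Lambda^\infty : x(0,0) \in H^c\}$ is $G$-invariant, and $\mu x \in X$ whenever $s(\mu) = x(0,0) \in H^c$.

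Next I would mimic the standard representation from Section~\ref{SSS:nontriv} on $\H := \ell^2(X)$, assuming $H^c \neq \emptyset$ (the case $H = \Lambda^0$ being trivial), by setting $U_g \delta_x := \delta_{g \cdot x}$ and $S_\mu \delta_x := \delta_{\mu x}$ when $s(\mu) = x(0,0)$, and zero otherwise. The only nontrivial check is the Cuntz-Krieger relation $s_v = \sum_{\mu \in v\Lambda^p} s_\mu s_\mu^*$ on $\H$: for $v \in H^c$ all sources of $\mu \in v\Lambda^p$ automatically lie in $H^c$ by the structural observation, reducing the relation to the usual faithful representation; for $v \in H$ both sides act as zero on $\H$ since no element of $X$ starts at $v$ and each $\mu \in v\Lambda^p$ has $s(\mu) \in H$. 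The covariance $U_g S_\mu = S_{g \cdot \mu} U_{g|_\mu}$ and unitarity of $U_g$ are routine.

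The universal property of $\O_{G,\Lambda}$ from Section~\ref{SSS:universal} then delivers $\pi$, with $\pi(s_v) = S_v = 0$ for $v \in H$ and $\pi(s_v) = S_v \neq 0$ for $v \in H^c$ (source-freeness of $\Lambda$ guarantees $v\Lambda^\infty \cap X \neq \emptyset$). The one genuinely conceptual step is the structural observation tying the paper's backward-closure ``saturated'' condition to the path-disconnectedness of $H$ from $H^c$; once this is in place, the rest is a direct verification and I anticipate no further obstacle.
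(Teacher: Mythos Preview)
Your structural observation—that every path has both endpoints on the same side of $H \sqcup H^c$—relies on reading the paper's stated definition of $G$-saturated, $r(\Lambda H) \subseteq H$, at face value. That definition is almost certainly a misprint: under it, $H(I)$ would typically fail to be $G$-saturated for a gauge- and diagonal-invariant ideal $I$ (take any $\mu$ with $s_{s(\mu)} \in I$ but $s_{r(\mu)} \notin I$), contradicting the remark preceding Theorem~\ref{T:chagi}, and the paper's own proof of the present lemma invokes only the standard condition (``there is $\nu_0 \in v\Lambda^p$ with $s(\nu_0) \notin H$'', i.e.\ the contrapositive of: $s(v\Lambda^p) \subseteq H \Rightarrow v \in H$). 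Under the intended, weaker meaning of saturated your construction breaks: an infinite path $x$ with $x(0,0) \in H^c$ may drift into $H$, and if $\mu_0 := x(0,p)$ has $s(\mu_0) \in H$ then $S_{\mu_0} = 0$ on $\ell^2(X)$ while $S_{r(\mu_0)}\delta_x = \delta_x$, so $\sum_{\mu \in r(\mu_0)\Lambda^p} S_\mu S_\mu^* \neq S_{r(\mu_0)}$ and the Cuntz--Krieger relation fails.

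The repair is to replace your $X$ by $(\Lambda(\Lambda^0 \setminus H))^\infty$, the infinite paths lying entirely in $H^c$; saturation in the standard sense makes $\Lambda(\Lambda^0 \setminus H)$ source-free, so this set meets $v\Lambda^\infty$ for every $v \in H^c$, and now every initial segment has source in $H^c$ so the relations go through. With that change your argument is correct and genuinely different from the paper's, which instead pushes an approximant of $s_v$ in $I(H)$ through the diagonal expectation $\E_{G,\Lambda}$ to land in $\spn\{s_\mu s_\mu^* : s(\mu) \in H\}$, and then uses orthogonality against some $s_{\nu_0} s_{\nu_0}^*$ with $s(\nu_0) \notin H$ to force a norm contradiction. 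Your route is more constructive (it exhibits an explicit representation on which $s_v$ survives); the paper's is shorter and never leaves $\O_{G,\Lambda}$.
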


\begin{proof}
It is straightforward to see that if $v \in H$ then $s_v \in I(H)$. Conversely, suppose that $s_v \in I(H)$. Since $H$ is $G$-hereditary, we can approximate $s_v$ by $\spn\{s_\mu u_g s_\nu^*:s(\mu)=g \cdot s(\nu) \in H\}$.
Since $\E_{G,\Lambda}$ is an expectation from $\mathcal{O}_{G,\Lambda}$ to $\mathcal{D}_\Lambda$, we can furthermore approximate $s_v$ by $\spn\{s_\mu s_\mu^*:s(\mu) \in H\}$.
Let $\epsilon>0$ be small enough. Then there are $p\in\bN^k$, $\lambda_\mu\in\bC$, $\mu\in\Lambda^p$ with $s(\mu)\in H$ such that
\[
\|\sum\lambda_\mu s_\mu s_\mu^*-s_v\|=\|\sum\lambda_\mu s_\mu s_\mu^*-\sum_{\nu\in v\Lambda^p}s_\nu s_\nu^*)\|<\epsilon.
\]
To the contrary, assume that $v\not\in H$. Then there is $\nu_0\in v\Lambda^p$ with $s(\nu_0)\not\in H$ as $H$ is $G$-saturated. So
$1\le \|(\sum\lambda_\mu s_\mu s_\mu^*-\sum_{\nu_0\ne\nu\in v\Lambda^p}s_\nu s_\nu^*)-s_{\nu_0}s_{\nu_0}^*\|<\epsilon$, an absurd.
Therefore, $v\in H$.
\end{proof}

\begin{lem}\label{L:quotient}
Let $(G,\Lambda)$ be a self-similar $k$-graph, and $H$ be a $G$-hereditary and $G$-saturated subset of $\Lambda^0$. Then $(G,\Lambda(\Lambda^0 \setminus H))$ is a self-similar $k$-graph. Denote by $\{s_\mu,u_g\}_{\mu\in\Lambda,g \in G}$ and $\{t_\mu,w_g\}_{\mu \in \Lambda(\Lambda^0 \setminus H),g \in G}$ the generators of $\ca_u(G,\Lambda)$ and $\ca_u(G,\Lambda(\Lambda^0 \setminus H))$, respectively.
Then there exists an isomorphism $\psi:\mathcal{O}_{G,\Lambda(\Lambda^0 \setminus H)} \to \mathcal{O}_{G,\Lambda} / I(H)$ such that $\psi(t_\mu w_g t_\nu^*)=s_\mu u_g s_\nu^*+I(H)$ for all $\mu,\nu \in \Lambda,g \in G$ with $s(\mu)=g \cdot s(\nu) \in \Lambda^0 \setminus H$.
\end{lem}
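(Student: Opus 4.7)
The plan is to: verify $(G, \Lambda(\Lambda^0\setminus H))$ is a pseudo free self-similar $k$-graph; produce a self-similar $(G,\Lambda(\Lambda^0\setminus H))$-family in the multiplier algebra $M(\O_{G,\Lambda}/I(H))$ whose elements are the images of $s_\mu u_g s_\nu^*$; invoke the universal property of $\O_{G,\Lambda(\Lambda^0\setminus H)}$ (Section~\ref{SSS:universal}) to obtain $\psi$; and apply the gauge-invariant uniqueness theorem (Proposition~\ref{P:GUIT}) to conclude injectivity.

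That $(G, \Lambda(\Lambda^0\setminus H))$ is a pseudo free self-similar $k$-graph is routine. Since $G\cdot H\subseteq H$, the $G$-action restricts. Hereditariness of $H$ yields $\Lambda(\Lambda^0\setminus H)=(\Lambda^0\setminus H)\Lambda(\Lambda^0\setminus H)$, so composition is well-defined. For source-freeness: if $s(v\Lambda^p)\subseteq H$ for some $v\in\Lambda^0\setminus H$, then $G$-saturation forces $v\in H$, a contradiction. The cocycle $\vert$ and pseudo freeness restrict trivially.

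Let $q:\O_{G,\Lambda}\to\O_{G,\Lambda}/I(H)$ denote the quotient, extended canonically to multipliers. For each $g\in G$, the strict sum $U_g:=\sum_{v\in\Lambda^0}s_v u_g s_{g^{-1}\cdot v}^*\in UM(\O_{G,\Lambda})$, available by Lemma~\ref{operators in multiplier}, satisfies $U_g I(H)\subseteq I(H)\supseteq I(H)U_g$ because $G\cdot H\subseteq H$; hence $W_g:=q(U_g)\in UM(\O_{G,\Lambda}/I(H))$ is unitary. Set $T_\mu:=q(s_\mu)$ for $\mu\in\Lambda(\Lambda^0\setminus H)$. Then $\{W,T\}$ is a self-similar $(G,\Lambda(\Lambda^0\setminus H))$-family in $M(\O_{G,\Lambda}/I(H))$; the only Cuntz--Krieger relation requiring care is
\[
T_v=\sum_{\mu\in v\Lambda^p(\Lambda^0\setminus H)}T_\mu T_\mu^* \qforal v\in\Lambda^0\setminus H,\, p\in\bN^k,
\]
which follows by splitting $s_v=\sum_{\mu\in v\Lambda^p}s_\mu s_\mu^*$ according to whether $s(\mu)\in H$ and absorbing the $H$-terms into $I(H)$. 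The universal property now delivers $\psi$ with the prescribed formula on generators. Surjectivity is immediate: every spanning element $s_\mu u_g s_\nu^*+I(H)$ is zero when $s(\mu)\in H$ or $s(\nu)\in H$ (since then $s_\mu$ or $s_\nu$ lies in $I(H)$), and otherwise equals $\psi(t_\mu w_g t_\nu^*)$.

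The main obstacle is injectivity. Gauge invariance of $I(H)$ descends the gauge action to a strongly continuous action $\bar\gamma$ on $\O_{G,\Lambda}/I(H)$, which $\psi$ visibly intertwines with the gauge action on the domain, supplying hypothesis (i) of Proposition~\ref{P:GUIT}. For hypothesis (ii)---injectivity of $\psi$ on $A_{G,\Lambda(\Lambda^0\setminus H)}=C_0(\Lambda^0\setminus H)\rtimes G$---I plan to compare the faithful expectation $E_1:C_0(\Lambda^0\setminus H)\rtimes G\to C_0(\Lambda^0\setminus H)$ from \cite[Proposition~4.1.9]{BO08} with the expectation $\bar\E$ on $\O_{G,\Lambda}/I(H)$ descended from $\E_{G,\Lambda}$; this descent is legitimate because $I(H)$ is diagonal-invariant, as noted after Definition~\ref{D:hsgdi}. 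These two expectations are intertwined by $\psi$ on generators, so injectivity of $\psi$ on $A_{G,\Lambda(\Lambda^0\setminus H)}$ reduces to injectivity on $C_0(\Lambda^0\setminus H)$, which is immediate from Lemma~\ref{L:HIH} since $s_v\notin I(H)$ for every $v\in\Lambda^0\setminus H$. Proposition~\ref{P:GUIT} then yields that $\psi$ is an isomorphism, completing the proof.
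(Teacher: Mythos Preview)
Your proposal is correct and follows essentially the same route as the paper: construct a self-similar $(G,\Lambda(\Lambda^0\setminus H))$-family in (the multipliers of) the quotient, invoke the universal property to get $\psi$, then verify the hypotheses of Proposition~\ref{P:GUIT} by descending the gauge action and the diagonal expectation and reducing injectivity on $A_{G,\Lambda(\Lambda^0\setminus H)}$ to injectivity on $C_0(\Lambda^0\setminus H)$ via Lemma~\ref{L:HIH}. The only cosmetic difference is that the paper writes $W_g:=u_g+I(H)$ directly (working informally in $\ca_u(G,\Lambda)/I(H)$), whereas you build the unitaries as strict sums in $M(\O_{G,\Lambda})$ and then pass to the quotient; your version is arguably cleaner on this point.
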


\begin{proof}
The first statement is easy to verify. We prove the second statement. For $g\in G$ and $\mu \in \Lambda$ and $g\in G$, define $W_g:= u_g+I(H)$ and $T_\mu:=s_\mu+I(H)$.
Then $\{W, T\}$ is a self-similar $(G, \Lambda(\Lambda^0\setminus H))$-family in $\ca_u(G,\Lambda)/I(H)$.
The universal property of $\O_{G,\Lambda (\Lambda^0\setminus H)}$ yields a surjective homomorphism
$\psi:\mathcal{O}_{G,\Lambda(\Lambda^0 \setminus H)} \to \mathcal{O}_{G,\Lambda} / I(H)$.
Since $I(H)$ is gauge-invariant, there exists a strongly continuous homomorphism $\alpha:\bT^k \to \Aut(\mathcal{O}_{G,\Lambda} / I(H))$ such that $\alpha_z(\psi(t_\mu w_g t_\nu^*))=z^{d(\mu)-d(\nu)}\psi(t_\mu w_g t_\nu^*)$ for all $z \in \bT^k,\ \mu,\ \nu \in \Lambda(\Lambda^0 \setminus H),\ g \in G$ with $s(\mu)=g \cdot s(\nu)$. For brevity, by Theorem~\ref{T:OOG}, we identify $\O_{G,\Lambda}$ and $\O_{G,\Lambda(\Lambda^0\setminus H)}$ with $\O_{X_{G,\Lambda}}$ and $\O_{X_{G,\Lambda(\Lambda^0\setminus H)}}$ respectively.
Then by Proposition~\ref{P:GUIT} , in order to show that $\psi$ is an isomorphism, it remains to show that $\psi$ is injective on $C_0(\Lambda^0 \setminus H) \rtimes G$. By \cite[Proposition~4.1.9, Theorem~4.2.6]{BO08} there exists a faithful expectation $E:C_0(\Lambda^0 \setminus H) \rtimes G\to C_0(\Lambda^0 \setminus H)$ such that $E(t_vw_g)=\delta_{g,1_G}t_v$ for all $v \in \Lambda^0 \setminus H,\ g \in G$. Since $I(H)$ is diagonal-invariant, there exists a bounded linear map $L:\mathcal{O}_{G,\Lambda} / I(H) \to \mathcal{D}_\Lambda/I(H)$ such that $L(s_\mu u_g s_\nu^*+I(H))=\delta_{\mu,\nu} \delta_{g,1_G} s_\mu s_\mu^*+I(H)$ for all $g\in G$ and $(\mu,\nu) \in \Lambda \tensor[_g]\times{_s}\Lambda$. By Lemma~\ref{L:HIH}, $\psi \vert_{C_0(\Lambda^0 \setminus H)}$ is injective. Since $\psi \circ E=L \circ \psi$, by Proposition~\cite[Proposition 3.11]{Kat03}, $\psi$ is injective on $C_0(\Lambda^0 \setminus H) \rtimes G$. Hence $\psi$ is an isomorphism.
\end{proof}

\begin{thm}\label{T:chagi}
Let $(G,\Lambda)$ be a self-similar $k$-graph. Then there exists a one-to-one correspondence between the set of all $G$-hereditary and $G$-saturated subsets of $\Lambda^0$ and the set of all gauge- and diagonal-invariant ideals of $\mathcal{O}_{G,\Lambda}$ such that $H \mapsto I(H)$ with the inverse $I \mapsto H(I)$.
\end{thm}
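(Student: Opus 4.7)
The plan is to verify that the assignments $H\mapsto I(H)$ and $I\mapsto H(I)$ are mutually inverse bijections. The composite $H\mapsto H(I(H))$ is the identity by Lemma~\ref{L:HIH}; the substantive content is to show that for any gauge- and diagonal-invariant ideal $I$ of $\O_{G,\Lambda}$, one has $I=I(H(I))$.

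Write $H:=H(I)$. The inclusion $I(H)\subseteq I$ is immediate, since $s_v\in I$ for every $v\in H$. For the reverse containment it suffices to show that the canonical surjection $\O_{G,\Lambda}/I(H)\twoheadrightarrow \O_{G,\Lambda}/I$ is injective. Composing with the isomorphism of Lemma~\ref{L:quotient} produces a surjective $*$-homomorphism
\[
\psi\colon \O_{G,\Lambda(\Lambda^0\setminus H)}\longrightarrow \O_{G,\Lambda}/I,\qquad t_\mu w_g t_\nu^*\longmapsto s_\mu u_g s_\nu^*+I,
\]
and I will deduce injectivity of $\psi$ from the gauge-invariant uniqueness theorem (Proposition~\ref{P:GUIT}) applied to the self-similar $k$-graph $(G,\Lambda(\Lambda^0\setminus H))$.

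To verify the hypotheses of Proposition~\ref{P:GUIT}: the gauge-invariance of $I$ lets $\gamma$ descend to a strongly continuous $\bT^k$-action $\alpha$ on $\O_{G,\Lambda}/I$ which is intertwined by $\psi$ with the gauge action on $\O_{G,\Lambda(\Lambda^0\setminus H)}$. For injectivity of $\psi$ on $A_{G,\Lambda(\Lambda^0\setminus H)}=C_0(\Lambda^0\setminus H)\rtimes G$, observe first that injectivity on the subalgebra $C_0(\Lambda^0\setminus H)$ is automatic from the definition of $H=H(I)$: if $v\notin H$ then $s_v\notin I$. To upgrade this to the crossed product, I would mimic the argument of Lemma~\ref{L:quotient}: invoke \cite[Proposition~4.1.9]{BO08} to get a faithful conditional expectation $E_1\colon C_0(\Lambda^0\setminus H)\rtimes G\to C_0(\Lambda^0\setminus H)$ with $E_1(t_v w_g)=\delta_{g,1_G}t_v$, and use the diagonal-invariance $\E_{G,\Lambda}(I)\subseteq I$ to produce a bounded linear map $\overline{\E}\colon \O_{G,\Lambda}/I\to \D_\Lambda/(\D_\Lambda\cap I)$ satisfying $\overline{\E}\circ\pi=\pi\circ\E_{G,\Lambda}$, where $\pi\colon\O_{G,\Lambda}\to\O_{G,\Lambda}/I$ is the quotient. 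A check on the generators $t_v w_g$ yields $\overline{\E}\circ\psi=\psi\circ E_1$, and then \cite[Proposition~3.11]{Kat03} gives injectivity of $\psi$ on $C_0(\Lambda^0\setminus H)\rtimes G$, completing the verification.

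The main delicate point is the interplay of the two invariance hypotheses: gauge-invariance transports the torus action to the quotient so that Proposition~\ref{P:GUIT} is applicable, while diagonal-invariance is precisely what is needed to descend $\E_{G,\Lambda}$ to an intertwining map, enabling the Katsura-type faithful-expectation argument on the crossed-product core $A_{G,\Lambda(\Lambda^0\setminus H)}$. Neither hypothesis alone will do, which is exactly why both are built into the statement; this is also in line with the authors' remark in the introduction that in the self-similar setting gauge-invariant ideals need not be diagonal-invariant.
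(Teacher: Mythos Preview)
Your proposal is correct and follows essentially the same route as the paper's proof: both directions are handled identically, with Lemma~\ref{L:HIH} giving $H(I(H))=H$, and the converse obtained by composing the quotient map with the isomorphism of Lemma~\ref{L:quotient} and then applying Proposition~\ref{P:GUIT}, using gauge-invariance to descend the torus action and diagonal-invariance to descend $\E_{G,\Lambda}$ so that \cite[Proposition~3.11]{Kat03} applies on the crossed-product core. The only differences from the paper are cosmetic (names of maps, and the paper additionally cites \cite[Theorem~4.2.6]{BO08} alongside Proposition~4.1.9 for the faithful expectation on the full crossed product).
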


\begin{proof}
First of all, fix a $G$-hereditary and $G$-saturated subset $H$ of $\Lambda^0$. Then Lemma~\ref{L:HIH} yields that $H(I(H)) = H$.

Fix a gauge- and diagonal-invariant ideal $I$ of $\mathcal{O}_{G,\Lambda}$. Suppose that $\{u, s\}$ and $\{w, t\}$ be the universal self-similar $(G,\Lambda)$-family and $(G,\Lambda(\Lambda^0 \setminus H(I)))$-family, respectively.
Clearly, $I(H(I)) \subseteq I$. So there is surjective quotient map $q:\mathcal{O}_{G,\Lambda}/I(H(I)) \to \mathcal{O}_{G,\Lambda} /I$.
By Lemma~\ref{L:quotient}, there exists a surjective homomorphism $\pi:\mathcal{O}_{G,\Lambda(\Lambda^0 \setminus H(I))} \to \mathcal{O}_{G,\Lambda} /I$ such that $\pi(t_\mu w_g t_\nu^*)=s_\mu u_g s_\nu^*+I$ for all $g\in G,\ \mu,\ \nu \in \Lambda(\Lambda^0 \setminus H(I))$ with $s(\mu)=g \cdot s(\nu)$. Since $I$ is gauge-invariant, there exists a strongly continuous homomorphism $\alpha:\bT^k \to \Aut(\mathcal{O}_{G,\Lambda} / I)$ such that $\alpha_z(\pi(t_\mu w_g t_\nu^*))=z^{d(\mu)-d(\nu)}\pi(t_\mu w_g t_\nu^*)$ for all $z \in \bT^k, \mu,\nu \in \Lambda(\Lambda^0 \setminus H(I)),g \in G$ with $s(\mu)=g \cdot s(\nu)$. As before, by Theorem~\ref{T:OOG} and Proposition~\ref{P:GUIT}, in order to show that $\pi$ is an isomorphism, one only needs to  show that $\pi$ is injective on $C_0(\Lambda^0 \setminus H(I)) \rtimes G$. By \cite[Proposition~4.1.9, Theorem~4.2.6]{BO08} there exists a faithful expectation $E:C_0(\Lambda^0 \setminus H(I)) \rtimes G\to C_0(\Lambda^0 \setminus H(I))$ such that $E(t_v w_g)=\delta_{g,1_G}t_v$ for all $v \in \Lambda^0 \setminus H(I),g \in G$. Since $I$ is diagonal-invariant, there exists a bounded linear map $L:\mathcal{O}_{G,\Lambda} / I \to \mathcal{D}_\Lambda/I$ such that $L(s_\mu u_g s_\nu^*+I)=\delta_{\mu,\nu} \delta_{g,1_G} s_\mu s_\mu^*+I$ for all $\mu,\nu \in \Lambda,g \in G$ with $s(\mu)=g \cdot s(\nu)$. By the definition of $H(I)$, one has that $\pi \vert_{C_0(\Lambda^0 \setminus H(I))}$ is injective. Since $\pi \circ E=L \circ \pi$, by \cite[Proposition 3.11]{Kat03}, $\pi$ is injective on $C_0(\Lambda^0 \setminus H(I)) \rtimes G$. Hence $\pi$ is an isomorphism, and we are done.
\end{proof}

Theorem~\ref{T:chagi} gives a clear relationship between $G$-hereditary $G$-saturated subsets and both gauge- and diagonal-invariant ideals. The corresponding result for $k$-graph C*-algebra results (see, e.g., \cite{RSY03}) says that the hereditary and saturated subsets are one-to-one corresponding to gauge-invariant ideals. This in particular implies that every gauge-invariant ideal of a $k$-graph C*-algebra is automatically diagonal-invariant. However, the example below shows that this is not true for self-similar $k$-graph C*-algebras any more in general.

\begin{eg}\label{gauge-inv not diag-inv}
Let $G:=\mathbb{Z}_2$ be the cyclic group of order $2$, and $\Lambda$ be a $k$-graph with a single vertex $v$. Define
$g \cdot \mu:=\mu$ and $g \vert_\mu:=g$ for all $g\in \{0,1\}$ and $\mu \in \Lambda$.
Then it is easy to check that $(G,\Lambda)$ is a pseudo free self-similar $k$-graph.
Since the only $G$-hereditary and $G$-saturated sets are $\mt$ and $\{v\}$, we deduce that the only gauge- and diagonal-invariant ideals of $\mathcal{O}_{G,\Lambda}$ ($\cong \O_\Lambda\otimes \ca(G)$) are $0$ and $\mathcal{O}_{G,\Lambda}$. On the other hand, let $p:=(u_0+u_1)/2$ which is a central projection, and let $I$ be the ideal of $\mathcal{O}_{G,\Lambda}$ generated by $p$. Then $I$ is gauge-invariant. However, since $p$ is central, $0\ne I=p(\mathcal{O}_{G,\Lambda})p \ne \O_{G,\Lambda}$. So $E_{G,\Lambda}(p)=u_0/2  \notin I$. Hence $I$ is not diagonal-invariant.
\end{eg}

Observe that $(G,\Lambda)$ in the above example is not strongly locally faithful. In the following, we show that, for a locally faithful self-similar $k$-graph $(G,\Lambda)$, every gauge-invariant ideal of $\O_{G,\Lambda}$ is also diagonal-invariant. This reconciles with the case of $k$-graphs (i.e., $G$ is trivial).

\begin{prop}\label{characterize gauge-inv ideal strongly faithful}
Let $(G,\Lambda)$ be a strongly locally faithful self-similar $k$-graph. Then every gauge-invariant ideal of $\mathcal{O}_{G,\Lambda}$ is diagonal-invariant. Hence there exists a one-to-one correspondence between the set of all $G$-hereditary and $G$-saturated subsets of $\Lambda^0$ and the set of all gauge-invariant ideals of $\mathcal{O}_{G,\Lambda}$ such that $H \mapsto I(H)$ with the inverse $I \mapsto H(I)$.
\end{prop}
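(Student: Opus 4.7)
The plan is to reduce the diagonal-invariance of a gauge-invariant ideal to an injectivity statement handled by the gauge-invariant uniqueness theorem (Proposition~\ref{P:GUIT}), with strong local faithfulness supplying the missing injectivity on the crossed-product subalgebra. Let $I$ be a gauge-invariant ideal of $\mathcal{O}_{G,\Lambda}$ and set $H := H(I)$, which is $G$-hereditary and $G$-saturated. Since $I(H) \subseteq I$ and $I(H)$ is diagonal-invariant, proving $I = I(H)$ at once yields diagonal-invariance of $I$; the bijection statement then follows directly from Theorem~\ref{T:chagi}.

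Consider the induced surjection $\bar\pi : \mathcal{O}_{G,\Lambda}/I(H) \to \mathcal{O}_{G,\Lambda}/I$, and use Lemma~\ref{L:quotient} to identify its domain with $\mathcal{O}_{G,\tilde\Lambda}$, where $\tilde\Lambda := \Lambda(\Lambda^0 \setminus H)$. Then $\bar\pi$ is a gauge-equivariant surjection that is injective on $C_0(\tilde\Lambda^0)$ by the very definition of $H$. A short check shows that $(G,\tilde\Lambda)$ is itself strongly locally faithful: for $v \notin H$, $G$-saturation of $H$ forces $v\Lambda^p = v\tilde\Lambda^p$ for every $p$, so the property is inherited from $(G,\Lambda)$. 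By Proposition~\ref{P:GUIT}, it now suffices to show that $\bar\pi$ is injective on $A_{G,\tilde\Lambda} = C_0(\tilde\Lambda^0) \rtimes G$.

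The heart of the proof is a compression computation. Given a nonzero $a = \sum_{g \in F} f_g u_g \in A_{G,\tilde\Lambda}$ with $F \subseteq G$ finite, right-multiplying by a suitable $u_{g_0}^*$ allows us to assume $f_{1_G} \neq 0$; pick $v_0 \in \tilde\Lambda^0$ with $f_{1_G}(v_0) \neq 0$. For each $g \in F \setminus \{1_G\}$, strong local faithfulness supplies some $p_g \in \bN^k$ with $g \cdot \mu \neq \mu$ for all $\mu \in v_0 \Lambda^{p_g}$; taking $p \in \bN^k$ to be a componentwise upper bound of the $p_g$ and factoring each $\mu \in v_0 \Lambda^p$ through its initial segment of degree $p_g$ gives $g \cdot \mu \neq \mu$ for every $\mu \in v_0 \tilde\Lambda^p$ and every $g \in F \setminus \{1_G\}$. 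Fix any such $\mu$. Using the relations $s_v u_g s_\mu = \delta_{v, g \cdot r(\mu)}\, s_{g \cdot \mu}\, u_{g|_\mu}$ and $s_\mu^* s_{g \cdot \mu} = \delta_{\mu, g \cdot \mu}\, s_{s(\mu)}$, a direct calculation gives
\[
s_\mu^* a s_\mu \;=\; \sum_{g \cdot \mu = \mu} f_g(v_0)\, s_{s(\mu)}\, u_{g|_\mu} \;=\; f_{1_G}(v_0)\, s_{s(\mu)},
\]
which is nonzero since $G$-saturation forces $s(\mu) \notin H$ and hence $\bar\pi(s_{s(\mu)}) \neq 0$. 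Consequently $\bar\pi(s_\mu)^*\, \bar\pi(a)\, \bar\pi(s_\mu) \neq 0$, whence $\bar\pi(a) \neq 0$.

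The main obstacle is precisely this collapse of $s_\mu^* a s_\mu$ to a single diagonal term; the only $g \in F$ that survives must be $1_G$, and controlling all $g$ in $F$ simultaneously is exactly where strong local faithfulness is indispensable (as Example~\ref{gauge-inv not diag-inv} confirms, without some such hypothesis the statement fails). Once $\bar\pi$ is injective on $A_{G,\tilde\Lambda}$, Proposition~\ref{P:GUIT} gives $I = I(H)$, so $I$ is diagonal-invariant, and the bijection assertion is then immediate from Theorem~\ref{T:chagi}.
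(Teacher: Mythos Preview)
Your approach---proving $I=I(H(I))$ via Proposition~\ref{P:GUIT} by checking injectivity of $\bar\pi$ on $A_{G,\tilde\Lambda}$---is genuinely different from the paper's proof, which works directly inside $I$: it approximates $E(a)\in I$ by a finite sum in which strong local faithfulness has been used to force every ``off-diagonal'' term to have $\mu'_i\neq\nu'_i$, and then compresses by the \emph{projections} $s_{\mu_i}s_{\mu_i}^*$ to exhibit $\E_{G,\Lambda}(a)$ as a norm-limit of elements of $I$.  Your route is conceptually cleaner (it explains the result as a uniqueness theorem), while the paper's argument is more self-contained and avoids the crossed-product subtleties below.

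There is, however, a real gap. Your compression shows $\bar\pi(a)\neq0$ only for $a$ in the \emph{algebraic} span of $\{f\,u_g:f\in C_c(\tilde\Lambda^0),\,g\in G\}$, and injectivity of a $*$-homomorphism on a dense $*$-subalgebra does \emph{not} imply injectivity on its $\mathrm C^*$-closure (the quotient map $C^*(\mathbb F_2)\to C^*_r(\mathbb F_2)$ is the standard counterexample).  The fix is to observe that your computation actually yields a norm inequality: for algebraic $a$ and any $v_0\in\tilde\Lambda^0$ you produce $\mu$ with $\bar\pi(s_\mu)^*\bar\pi(a)\bar\pi(s_\mu)=f_{1_G}(v_0)\bar\pi(s_{s(\mu)})$, whence $\|\bar\pi(a)\|\geq|f_{1_G}(v_0)|$; supremizing over $v_0$ gives $\|\bar\pi(a)\|\geq\|E_1(a)\|$ for the canonical expectation $E_1:A_{G,\tilde\Lambda}\to C_0(\tilde\Lambda^0)$, and this extends by continuity to all $a\in A_{G,\tilde\Lambda}$.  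Now injectivity follows from faithfulness of $E_1$---which is exactly the implicit amenability hypothesis already used at the start of Section~\ref{S:gdinv} (the existence of $\E_{G,\Lambda}$ there is derived from Theorem~\ref{T:OOG}).  This step is essential and is missing from your write-up.

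One minor point: the claim that $G$-saturation of $H$ forces $v\Lambda^p=v\tilde\Lambda^p$ is false (saturation only guarantees $v\tilde\Lambda^p\neq\varnothing$, not equality). Fortunately your argument needs only $v\tilde\Lambda^p\subseteq v\Lambda^p$, so strong local faithfulness for $(G,\tilde\Lambda)$ follows immediately without this claim.
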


\begin{proof}
Fix a gauge-invariant ideal $I$ of $\mathcal{O}_{G,\Lambda}$. Fix $a \in I$ and fix $\epsilon>0$. Let $E:\mathcal{O}_{G,\Lambda} \to \mathcal{O}_{G,\Lambda}^\gamma$ be the expectation given in \ref{SSS:gauge}. Then $\E_{G,\Lambda}(a)=\E_{G,\Lambda}(E(a))$. Since $I$ is gauge-invariant, $E(a) \in I$. Since $(G,\Lambda)$ is strongly locally faithful, for $g\in G$ and $(\mu,\nu)\in\Lambda\tensor[_g]\times{_s}\Lambda$, there exists $p \in \mathbb{N}^k$, such that $g \cdot \alpha \neq \alpha$ for all $\alpha \in s(\mu)\Lambda^p$. Then $s_\mu u_g s_\mu^*=\sum_{\alpha \in s(\mu)\Lambda^p}s_{\mu(g \cdot \alpha)}u_{g \vert_\alpha} s_{\mu\alpha}^*$. Thus there exists a finite sum $b=\sum_{i=1}^{n}\lambda_i s_{\mu_i} s_{\mu_i}^*+\sum_{i=1}^{n'}\lambda'_i s_{\mu'_i} u_{g_i}s_{\nu'_i}^*$ satisfying
\begin{itemize}
\item $d(\mu_i)=d(\mu'_j)=d(\nu'_j)$ for all $1 \leq i \leq n,1 \leq j \leq n'$;
\item $\mu_i \neq \mu_j$ for all $1 \leq i \neq j \leq n$;
\item $\mu'_i \neq \nu'_i$ for all $1 \leq i  \leq n'$;
\item $\Vert b-F(a)\Vert<\epsilon$.
\end{itemize}
Then
\[
\Vert \sum_{i=1}^{n}\lambda_i s_{\mu_i} s_{\mu_i}^*-\E_{G,\Lambda}(E(a)) \Vert=\Vert \E_{G,\Lambda}(b)-\E_{G,\Lambda}(E(a))\Vert\leq\Vert b-E(a)\Vert<\epsilon,
\]
and for $1 \leq i \leq n$, we have
\[
\Vert \lambda_i s_{\mu_i} s_{\mu_i}^*-s_{\mu_i}s_{\mu_i}^*F(a)s_{\mu_i}s_{\mu_i}^*\Vert=\Vert s_{\mu_i}s_{\mu_i}^*(b-F(a))s_{\mu_i}s_{\mu_i}^*\Vert \leq \Vert b-F(a)\Vert<\epsilon.
\]
So
\begin{align*}
&\Vert \E_{G,\Lambda}(a)-\sum_{i=1}^{n}s_{\mu_i}s_{\mu_i}^*E(a)s_{\mu_i}s_{\mu_i}^*\Vert
\\&\leq\Vert \E_{G,\Lambda}(E(a))-\sum_{i=1}^{n}\lambda_i s_{\mu_i} s_{\mu_i}^*\Vert+\Vert\sum_{i=1}^{n}(\lambda_i s_{\mu_i} s_{\mu_i}^*-s_{\mu_i}s_{\mu_i}^*E(a)s_{\mu_i}s_{\mu_i}^*)\Vert
\\&<\epsilon+\Vert \sum_{i=1}^{n}s_{\mu_i} s_{\mu_i}^*(\lambda_i s_{\mu_i} s_{\mu_i}^*-E(a)s_{\mu_i}s_{\mu_i}^*)s_{\mu_i}s_{\mu_i}^*\Vert
\\&<2\epsilon.
\end{align*}
Since $\sum_{i=1}^{n}s_{\mu_i}s_{\mu_i}^*E(a)s_{\mu_i}s_{\mu_i}^* \in I$ and $\epsilon$ is arbitrary, we conclude that $\E_{G,\Lambda}(a)$ $\in I$. Therefore $I$ is diagonal-invariant.

The second statement follows directly from the first statement and Theorem~\ref{T:chagi}.
\end{proof}


\section{Primitive Ideals of Self-Similar $k$-Graph C*-Algebras}
\label{S:primitive}

Throughout this section, let $(G,\Lambda)$ be a self-similar $k$-graph which satisfies
\begin{align*}
\tag{FV}
&\textsf{$g\cdot v=v\qforal g\in G\text{ and }v\in \Lambda^0$};\\
\tag{Cyc}
&\textsf{every cycline triple of $(G,\Lambda T)$ is a cycline pair of $(G,\Lambda T)$ for any maximal tail $T$ of $\Lambda$.}
 \end{align*}
Here, we use that the simple fact that $(G,\Lambda T)$ is a pseudo free self-similar $k$-graph for any maximal tail $T$. In this section, we identify all primitive ideals of $\O_{G,\Lambda}$ and study some basic properties of the Jacobson topology of its primitive ideal space. Our strategy of finding primitive ideals follows the vein of \cite{CKSS14}.

For a maximal tail $T$ of $\Lambda$, denote by $H_T$ the subset of $T$ which consists of all $v \in T$ satisfying the following property: For any $p, q \in \mathbb{N}^k$ with $p-q \in \Per_{G,\Lambda T}$ and for any $\mu \in v\Lambda^pT$, there exists a unique $\nu \in v\Lambda^qT$ such that $(\mu,\nu)$ is a cycline pair of $\Lambda T$.

\begin{prop}\label{Per_G,Lambda T is subgroup}
Let $T$ be a maximal tail of $\Lambda$. Then we have the following properties.
\begin{enumeratei}
\item\label{unique cycline pair} $H_{T}$ is a nonempty hereditary subset of $\Lambda T$;

\item\label{self-similar k graph (G,H_T Lambda)} $(G,H_T \Lambda T)$ is a pseudo free self-similar $k$-graph such that any cycline triple of $(G,H_T \Lambda T)$ is a cycline pair of $(G,H_T \Lambda T)$; 

\item\label{Per_G,Lambda T} $\Per_{G,H_T \Lambda T}=\Per_{H_T \Lambda T}$ is a subgroup of $\mathbb{Z}^k$;

\item\label{Z^k/ Per_G,H_T Lambda T-graph} Define an equivalence relation on $H_T \Lambda T$ as follows: for any $\mu,\nu \in H_T \Lambda T$, define $\mu \sim_T \nu$ if $(\mu,\nu)$ is a cycline pair of $H_T \Lambda T$. Then $(G,H_T \Lambda T/\!\!\sim_T)$ is an aperiodic, pseudo free self-similar $\mathbb{N}^k / \Per_{H_T \Lambda T}$-graph;

\item Let $\{w_z\}_{z \in \Per_{G,H_T \Lambda T}}$ be the generators of $\ca(\Per_{H_T\Lambda T})$. Then there exists an injective homomorphism from $\ca(\Per_{H_T \Lambda T})$ into $ZM(\mathcal{O}_{G,H_T \Lambda T})$ sending $w_z$ to $W_z$, where
\[
W_z=\sum_{z=p-q,\, (\mu,\nu) \in\C_{H_T\Lambda T}^{p,q}}s_\mu s_\nu^*.
\]
\end{enumeratei}
\end{prop}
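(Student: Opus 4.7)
The plan is to address (i)--(v) in order, with the nonemptiness of $H_T$ in (i) as the main obstacle. For the hereditary part of (i), the key observation is that if $v \in H_T$ and $\mu \in v(\Lambda T)$, then any cycline pair $(\mu\alpha,\beta)$ of $\Lambda T$ must factor as $\beta = \mu\nu$ with $(\alpha,\nu)$ cycline: for every $x \in s(\alpha)(\Lambda T)^\infty$ the identity $(\mu\alpha)x = \beta x$ forces $\beta$ to begin with $\mu$. This reduces both existence and uniqueness at $s(\mu)$ to the corresponding properties at $v$, so $s(\mu) \in H_T$. For nonemptiness, I would use assumption (Cyc) together with the definition of $\Per_{G,\Lambda T}$: for every $v \in T$ and every $z = p-q \in \Per_{G,\Lambda T}$, at least one cycline pair $(\mu,\nu) \in v\Lambda^p T \times v\Lambda^q T$ exists. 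The hardest step I expect is upgrading existence to uniqueness on enough vertices; the argument should parallel \cite{CKSS14}, where vertices admitting more than one cycline partner would force a strictly larger periodicity group, contradicting the definition of $\Per_{G,\Lambda T}$, and the vertices where uniqueness does hold can be shown to be a nonempty hereditary set.

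Part (ii) is then largely bookkeeping: under Property (FV), $H_T$ is automatically $G$-invariant, so restricting the self-similar action to $H_T \Lambda T$ gives a self-similar $k$-graph, and pseudo freeness descends from $(G,\Lambda)$. Every cycline triple of $(G,H_T\Lambda T)$ is also a cycline triple of $(G,\Lambda T)$, which is a cycline pair by (Cyc). Part (iii) follows from (ii): since cycline triples collapse to cycline pairs in $(G,H_T\Lambda T)$, the groups $\Per_{G,H_T\Lambda T}$ and $\Per_{H_T\Lambda T}$ coincide. To see this is a subgroup of $\bZ^k$, I would check closure under negation (by symmetry of cycline pairs) and under addition (by concatenating cycline pairs, with uniqueness in $H_T$ ensuring the composed pair is well-defined).

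For (iv), the relation $\sim_T$ respects the degree modulo $\Per_{H_T\Lambda T}$ by (iii), and uniqueness in $H_T$ guarantees that each $\sim_T$-class contains exactly one representative of every degree class in $\bN^k/\Per_{H_T\Lambda T}$; this produces the factorization property of an $\bN^k/\Per_{H_T\Lambda T}$-graph. The self-similar action descends because $G$ preserves cycline pairs (thanks to (FV) and pseudo freeness), and aperiodicity of the quotient reduces via Proposition~\ref{P:G-ape} to the fact that $\sim_T$ has quotiented out precisely the periodic behaviour. For (v), fix $z \in \Per_{H_T\Lambda T}$. Uniqueness in $H_T$ ensures that the sum defining $W_z$ is indexed by pairs $(\mu,\nu)$ without redundancy, and Lemma~\ref{operators in multiplier} shows that it converges strictly to an element of $M(\O_{G,H_T\Lambda T})$. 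The identities $W_{z_1}W_{z_2} = W_{z_1+z_2}$ and $W_z^* = W_{-z}$ follow from composition and reversal of cycline pairs, while unitarity comes from summing the Cuntz-Krieger relation over $\Lambda^p$; centrality is verified by checking that $W_z$ commutes with each $s_\mu$ and $u_g$ using the defining equation of a cycline pair and (FV). The universal property of $\ca(\Per_{H_T\Lambda T})$ then yields the homomorphism $w_z \mapsto W_z$, and injectivity follows because the gauge action of $\widehat{\bZ^k}$ separates the $W_z$'s into distinct spectral subspaces, so no nontrivial finite combination can vanish.
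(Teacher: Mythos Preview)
Your outline tracks the paper's proof closely: (i)--(iii) are handled there by direct citation of \cite[Theorem~4.2]{CKSS14}, and your sketched arguments are essentially what that reference contains; (iv) and (v) follow the same structure as the paper. Two points deserve tightening.

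In (iv), you attribute the descent of the self-similar action to ``(FV) and pseudo freeness,'' but the real engine is property~(Cyc) via part~(ii). Concretely, if $\mu \sim_T \nu$ then for every $x$ one has $(g\cdot\mu)x = (g\cdot\nu)(g|_\nu g|_\mu^{-1}\cdot x)$, so $(g\cdot\nu,\, g|_\nu g|_\mu^{-1},\, g\cdot\mu)$ is a cycline \emph{triple}; it is (Cyc) that forces this to be a cycline \emph{pair}, yielding simultaneously $g\cdot\mu \sim_T g\cdot\nu$ and $g|_\mu = g|_\nu$. The second equality is exactly what you need for the restriction map $g|_{[\mu]} := g|_\mu$ to be well-defined, and neither (FV) nor pseudo freeness alone gives it.

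In (v), saying ``no nontrivial finite combination can vanish'' is not enough: injectivity on the dense $*$-subalgebra $\bC[\Per_{H_T\Lambda T}]$ does not automatically propagate to $\ca(\Per_{H_T\Lambda T})$. The paper closes this by intertwining the canonical faithful trace $E_1$ on $\ca(\Per_{H_T\Lambda T})$ with the diagonal expectation $E_2:\O_{G,H_T\Lambda T}\to\D_{H_T\Lambda T}$ (extended to multipliers), checking $i\circ E_1 = E_2\circ i$ on generators, and invoking \cite[Proposition~3.11]{Kat03}. Your gauge-action idea can be made to do the same job---integrating over $\bT^k$ gives an expectation that plays the role of $E_2$---but you should phrase it as an intertwining-of-expectations argument rather than a linear-independence statement.
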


\begin{proof}
(i) This is \cite[Theorem~4.2 (2)]{CKSS14}.

(ii) This is true due to the assumption that any cycline triple of $(G,\Lambda T)$ is a cycline pair of $(G,\Lambda T)$.

(iii) From (ii) above, one has $\Per_{G,H_T \Lambda T}=\Per_{H_T \Lambda T}$. Then (iii) follows from \cite[Theorem~4.2 (1)]{CKSS14}.

(iv) 
For any $g\in G$, $\mu\sim_T\nu$ in $H_T \Lambda T$ and $x \in s(g \cdot \mu)(H_T \Lambda T)^\infty$, we compute that
\[
(g \cdot \mu) x=g \cdot (\mu(g\vert_\mu^{-1} \cdot x))=g \cdot (\nu(g\vert_\mu^{-1} \cdot x))=(g \cdot \nu) (g \vert_\nu g\vert_\mu^{-1} \cdot x).
\]
So $(g \cdot \nu,g \vert_\nu g \vert_\mu^{-1},g \cdot \mu)$ is a cycline triple of $(G,H_T\Lambda T)$. From (ii), we deduce that
\[
g \cdot \nu\sim_T g \cdot \mu\text{ and }g \vert_\mu=g \vert_\nu.
\]
If $[\mu]$ stands for the equivalence class of $\mu\in H_T\Lambda T$, then the above shows that 
\begin{align}
\label{E:gmu}
g\cdot [\mu]:=[g\cdot \mu]\text{ and } g|_{[\mu]}:=g|_\mu
\end{align}
are well-defined. It is now easy to verify that $(G,H_T \Lambda T/\!\! \sim_T)$ with the two operations defined in \eqref{E:gmu} is a self-similar $\mathbb{N}^k / \Per_{H_T \Lambda T}$-graph. 
Moreover it is also pseudo free. Indeed, $g\cdot[\mu]=[\mu]\text{ and }g|_{[\mu]}=1_G\Rightarrow g\cdot \mu \sim \mu \text{ and } g|_\mu =1_G\Rightarrow g\cdot \mu= \mu \text{ and } g|_\mu =1_G\Rightarrow g=1_G$ as $(G,\Lambda)$ is pseudo free. 


In the sequel, we show that $H_T \Lambda T/\!\! \sim_T$ is aperiodic. For this, let us fix
$\mu \in H_T \Lambda T, p,q \in \mathbb{N}^k,g \in G$ with $p - q \notin \Per_{H_T \Lambda T}$ or $g \neq 1_G$. We split into two cases.

\underline{Case 1: $p-q \notin \Per_{H_T \Lambda T}$.} We first claim that there exists $x \in s(\mu) (H_T\Lambda T)^\infty$ such that $\sigma^p(x) \neq g \vert_{(\mu x)(q,q+d(\mu))} \cdot \sigma^q(x)$.
To the contrary, take an arbitrary $x \in s(\mu) (H_T\Lambda T)^\infty$. Then as the proof of Case 1 for \cite[Proposition 3.6]{LY18}, one gets that $(g \vert_{(\mu x)(q,q+d(\mu))} \cdot x(q,p+q),g \vert_{(\mu x)(q,q+p+d(\mu))},x(p,p+q))$ is a cycline triple of $(G,H_T \Lambda T)$. So $p-q \in  \Per_{H_T \Lambda T}$, which is a contradiction.

Let now $x$ be such an infinite path. Find $l \in \mathbb{N}^k$ such that $\sigma^p(x)(0,l) \neq g \vert_{(\mu x)(q,q+d(\mu))} \cdot \sigma^q( x) (0,l)$. Let $\nu:=x(0,p+q+l)$. Then $d(\nu)-p-q \geq 0$ and $\nu(p,d(\nu)-q) \neq g \vert_{(\mu\nu)(q,d(\mu)+q)} \cdot \nu(q,d(\nu)-p)$. By \cite[Lemma 4.1]{Yan15}, one has
$[\nu(p,d(\nu)-q)] \neq [g \vert_{(\mu\nu)(q,d(\mu)+q)} \cdot \nu(q,d(\nu)-p)]$

\underline{Case 2: $p-q \in \Per_{H_T \Lambda T}$.} Then $g \neq 1_G$. LEt $\nu \in s(\mu)(H_T \Lambda T)$ be such that $d(\nu) \geq p$ and $g \vert_{(\mu \nu)(p,d(\mu)+p)} \neq 1_G$. Denote by $h:=g \vert_{(\mu \nu)(p,d(\mu)+p)}$. Suppose that for any $\gamma \in s(\nu)\Lambda$, we have $\nu(p,d(\nu))\gamma=h \cdot (\nu(p,d(\nu))\gamma)$. Then $(h \cdot \nu(p,d(\nu)),h \vert_{\nu(p,d(\nu))}, \nu(p,d(\nu)))$ is a cycline triple of $(G,H_T \Lambda T)$. Since $(G,H_T \Lambda T/\!\! \sim_T)$ is pseudo free, $h =1_G$ which is a contradiction. So there exists $\gamma \in s(\nu)\Lambda$, such that $\nu(p,d(\nu))\gamma \neq h \cdot (\nu(p,d(\nu))\gamma)$, and so $[\nu(p,d(\nu))\gamma] \neq [h \cdot (\nu(p,d(\nu))\gamma)]$ by \cite[Lemma 4.1]{Yan15} again.

Therefore by Proposition~\ref{P:G-ape} $(G,H_T \Lambda T/\!\! \sim_T)$ is aperiodic.

(v) Lemma~\ref{operators in multiplier} and a similar argument of the one in \cite[Theorem~4.9]{LY18} give a homomorphism $i:\ca(\Per_{H_T\Lambda T}) \to ZM(\mathcal{O}_{G,H_T\Lambda T})$ such that $i(w_n)=W_n$ for all $n \in \Per_{G,H_T\Lambda T}$. By \cite[Proposition~4.1.9]{BO08}, there exists a faithful expectation $E_1:\ca(\Per_{G,H_T\Lambda T}) \to \mathbb{C}$ such that $E_1(v_0)=1$ and $E_1(v_n)=0$ for all $0\ne n \in \Per_{G,H_T\Lambda T}$. As in the proof of Theorem~\ref{T:OOG}, there exists an expectation $E_2:\mathcal{O}_{G,H_T\Lambda T} \to \mathcal{D}_{H_T \Lambda T}$ such that $E_2(s_\mu u_g s_\nu^*)=\delta_{\mu,\nu}\delta_{g,1_G}s_\mu s_\mu^*$ for all $\mu,\nu \in H_T\Lambda T,g \in G$ with $s(\mu)=s(\nu)$. And $E_2$ extends to an expectation from $M(\mathcal{O}_{G,H_T\Lambda T})$ onto $M(\mathcal{D}_{H_T\Lambda T})$ such that $E_2(S)(a)=E_2(S(a))$ for all $S \in M(\mathcal{O}_{G,H_T\Lambda T}), a \in \mathcal{D}_{H_T\Lambda T}$. It is straightforward to check that $i\circ E_1=E_2\circ i$ and $i$ is injective on $w_0 \mathbb{C}$. By \cite[Proposition~3.11]{Kat03}, $i$ is injective.
\end{proof}

For any infinite path $x\in\Lambda^\infty$, let
\[
[x]:=\{y \in \Lambda^\infty:\text{there are } p,q \in \mathbb{N}^k \text{ and } g\in G \text{ such that } \sigma^p(x)=g \cdot\sigma^q( y)\}.
\]

\begin{lem}[{\cite[Lemma~5.2]{CKSS14}}]\label{L:cof}
Let $T$ be a maximal tail of $\Lambda$. Then there exists a cofinal infinite path in $\Lambda T$.
\end{lem}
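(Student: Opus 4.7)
The plan is to construct $x$ by an inductive enumerate-and-concatenate argument, invoking each of the three properties of the maximal tail $T$ in Definition~\ref{D:MT} in turn. Since $T$ is countable, fix an enumeration $T=\{v_1,v_2,\dots\}$ and let $\one:=(1,1,\dots,1)\in\bN^k$.

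The main recursion produces vertices $w_0,w_1,\dots\in T$ and paths $\mu_n\in w_{n-1}\Lambda w_n$ as follows. Set $w_0:=v_1$, and assume $w_{n-1}\in T$ has been constructed. First, apply condition~(iii) of Definition~\ref{D:MT} to the pair $w_{n-1},v_n\in T$ to obtain $u_n\in T$ with both $w_{n-1}\Lambda u_n\neq\mt$ and $v_n\Lambda u_n\neq\mt$. Then apply condition~(ii) at $u_n$ with $p=\one$ to pick some $\gamma_n\in u_n\Lambda^{\one}T$, set $w_n:=s(\gamma_n)\in T$, choose any $\beta_n\in w_{n-1}\Lambda u_n$, and define $\mu_n:=\beta_n\gamma_n\in w_{n-1}\Lambda w_n$. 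By construction $d(\mu_n)\geq\one$.

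Next, form the infinite composition $x:=\mu_1\mu_2\mu_3\cdots$. Because $d(\mu_1\mu_2\cdots\mu_n)\geq n\one$, the partial degrees diverge componentwise, so $x$ is a well-defined element of $\Lambda^\infty$. Each $w_n$ lies in $T$ by construction, while every interior vertex $\mu_n(p,p)$ has a path to $w_n\in T$ and hence lies in $T$ by condition~(i) of Definition~\ref{D:MT}; thus $x$ is an infinite path of $\Lambda T$. Cofinality then follows immediately from the enumeration: for any vertex $v$ of $\Lambda T$, which is necessarily $v_n$ for some $n$, the construction at stage $n$ yields $v_n\Lambda u_n\neq\mt$ and $u_n\Lambda w_n\neq\mt$, whence $v\Lambda x(d_n,d_n)\neq\mt$ with $d_n:=d(\mu_1\cdots\mu_n)$.

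The whole argument is essentially careful bookkeeping on the three maximal-tail axioms; the only subtlety is ensuring the partial degrees diverge in every coordinate so that $x$ genuinely lies in $\Lambda^\infty$ (rather than stalling along some coordinate hyperplane), and this is exactly the role of the mandatory $\one$-step extension $\gamma_n$ inserted at each stage via condition~(ii).
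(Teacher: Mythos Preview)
The paper does not give its own proof of this lemma; it simply cites \cite[Lemma~5.2]{CKSS14}. Your argument is correct and is essentially the standard enumerate-and-extend construction found in that reference: list the vertices of $T$, use property~(iii) to connect the current endpoint to the next listed vertex, and use~(ii) to push the degree strictly forward in every coordinate so the concatenation genuinely defines an infinite path.

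One minor caveat: your use of $\one=(1,\dots,1)\in\bN^k$ presupposes $k<\infty$. The paper formally allows $k=\infty$, in which case $\one$ is not an element of $\bN^k$ under the direct-sum convention; to cover that case you would instead insert, at stage $n$, an extension of degree $e_1+\cdots+e_n$ (or cycle through the generators in some other way) so that the partial degrees still diverge in every coordinate. Since the cited reference \cite{CKSS14} treats only finite $k$, this is a harmless point, but worth noting.
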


\begin{prop}\label{P:irrfx}
Let $T$ be a maximal tail of $\Lambda$, $f \in \widehat{\mathbb{Z}^k}$, and $x$ be a cofinal infinite path in $\Lambda T$ (see Lemma~\ref{L:cof}). Then there exists an irreducible representation $\pi_{f,x}$ of $\O_{G,\Lambda}$ on $\ell^2([x])$.
\end{prop}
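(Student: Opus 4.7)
The plan is to construct $\pi_{f,x}$ as an induced representation from a character of the isotropy group of $x$ in the groupoid $\G_{G,\Lambda}$. Under the hypothesis (FV), Proposition~\ref{P:OCG} gives $\O_{G,\Lambda}\cong\ca(\G_{G,\Lambda})$, and the orbit $r(s^{-1}(x))$ in $\G_{G,\Lambda}$ is, by the very definition of the groupoid, equal to $[x]$; so the construction will live naturally on $\ell^2([x])$.

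First I would identify the isotropy group $\mathrm{Iso}_x:=(\G_{G,\Lambda})_x^x$. An element of $\mathrm{Iso}_x$ arises from some $(\mu,g,\nu)\in\Lambda\tensor[_g]\times{_s}\Lambda$ and a tail $y$ of $x$ satisfying $\mu(g\cdot y)=\nu y=x$. Because $x$ is a cofinal path in $\Lambda T$, after shifting far enough along $x$ one may realize the relevant data inside $H_T\Lambda T$ and interpret $(\mu,g,\nu)$ as a cycline triple of $(G,H_T\Lambda T)$. Hypothesis (Cyc), together with Proposition~\ref{Per_G,Lambda T is subgroup}(ii), then forces every such cycline triple to be a cycline \emph{pair}, so the $Q(P,G)$-coordinate $\T_{d(\mu)}([g|_y])$ of the isotropy element is trivial. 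Consequently the cocycle
\[
c\bigl(\mu(g\cdot y);\T_{d(\mu)}([g|_y]),d(\mu)-d(\nu);\nu y\bigr):=d(\mu)-d(\nu)
\]
identifies $\mathrm{Iso}_x$ with the subgroup $\Per_{H_T\Lambda T}\subseteq\bZ^k$ of Proposition~\ref{Per_G,Lambda T is subgroup}(iii), and the restriction of $f\in\widehat{\bZ^k}$ yields a character $\chi_f$ of $\mathrm{Iso}_x$.

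Next, for each $y\in[x]$ I would fix a connecting element $\gamma_y\in s^{-1}(x)\cap r^{-1}(y)$ with $\gamma_x=x$, and on the standard basis $\{\delta_y\}_{y\in[x]}$ of $\ell^2([x])$ define, for every basic compact open bisection $Z(\mu,g,\nu)$,
\[
\pi_{f,x}(\mathds{1}_{Z(\mu,g,\nu)})\delta_y:=\chi_f(\gamma_z^{-1}\eta\,\gamma_y)\,\delta_z
\]
whenever there exists $\eta\in Z(\mu,g,\nu)$ with $s(\eta)=y$ and $r(\eta)=z$ (necessarily unique since $Z(\mu,g,\nu)$ is a bisection), and $0$ otherwise; note $\gamma_z^{-1}\eta\gamma_y\in\mathrm{Iso}_x$, so the phase is well defined. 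Setting $S_\mu:=\pi_{f,x}(\mathds{1}_{Z(\mu,1_G,s(\mu))})$ and $U_g:=\sum_{v\in\Lambda^0}\pi_{f,x}(\mathds{1}_{Z(v,g,g^{-1}\cdot v)})$ (a strongly convergent sum of partial isometries with pairwise orthogonal initial and final projections), the bisection calculus in $\G_{G,\Lambda}$ together with the cocycle identity for $\chi_f$ shows that $\{U_g,S_\mu\}_{g\in G,\mu\in\Lambda}$ is a self-similar $(G,\Lambda)$-family in $B(\ell^2([x]))$; the universal property of $\O_{G,\Lambda}$ (Section~\ref{SSS:universal}) then extends this to the desired representation $\pi_{f,x}$.

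Finally, for irreducibility, any $T$ commuting with $\pi_{f,x}(\O_{G,\Lambda})$ must commute with $\pi_{f,x}(\D_\Lambda)$; the latter is the image of $C_0(\G_{G,\Lambda}^0)$ and acts as a diagonal dense subalgebra of $\ell^\infty([x])$, so $T$ is diagonal, $T\delta_y=t(y)\delta_y$. Commutation with each $S_\mu$ and $U_g$ forces $t$ to be constant on the single groupoid orbit $[x]$ up to the twist $\chi_f$; having fixed one character $\chi_f$ of $\mathrm{Iso}_x$, there is no further freedom, so $t$ must be a constant function and $T\in\bC I$. The main obstacle I anticipate is the bookkeeping verifying that the formula above really defines a self-similar $(G,\Lambda)$-family — in particular the covariance relation $U_gS_\mu=S_{g\cdot\mu}U_{g|_\mu}$ — together with carefully extracting from (Cyc) the clean identification $\mathrm{Iso}_x\cong\Per_{H_T\Lambda T}\subseteq\bZ^k$ needed to make sense of $\chi_f$.
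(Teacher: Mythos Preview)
Your approach is genuinely different from the paper's and, with one repair, works. The paper does not touch the groupoid at all: it simply writes down, for $y\in[x]$,
\[
U_g\delta_y=\delta_{g\cdot y},\qquad
S_\mu\delta_y=\begin{cases}f(d(\mu))\,\delta_{\mu y}&\text{if }s(\mu)=y(0,0),\\ 0&\text{otherwise,}\end{cases}
\]
checks by hand that $\{U,S\}$ is a self-similar $(G,\Lambda)$-family, invokes the universal property, and proves irreducibility by exhibiting the bounded net $\big(\pi_{f,x}(s_{x(0,p+l)}u_{g|_{y(q,q+l)}}s_{y(0,q+l)}^*)\big)_{l\in\bN^k}$ converging strongly to a scalar multiple of the rank-one operator $\theta_{\delta_x,\delta_y}$. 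In particular the proof of Proposition~\ref{P:irrfx} uses neither (Cyc) nor any description of the isotropy at $x$; those hypotheses enter only later, in Proposition~\ref{P:Ifx}. Your induced-representation picture is the conceptual explanation of this formula --- the two representations differ by the diagonal unitary $\delta_y\mapsto f(c(\gamma_y))\delta_y$ --- and your commutant argument for irreducibility is fine.

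The gap is in your identification of $\Iso_x$ with $\Per_{H_T\Lambda T}$. An isotropy element at $x$ is witnessed by a \emph{single} tail $y$ of $x$ satisfying $\mu(g\cdot y)=\nu y$, whereas a cycline triple requires this identity for \emph{every} infinite path out of $s(\nu)$; so ``after shifting far enough one may interpret $(\mu,g,\nu)$ as a cycline triple'' is not justified, and (Cyc) does not obviously force the $Q(\bN^k,G)$-coordinate of an arbitrary isotropy element to vanish. Fortunately you do not need this identification at all: since $c:\G_{G,\Lambda}\to\bZ^k$ is a groupoid homomorphism, $\chi_f:=f\circ c|_{\Iso_x}$ is automatically a character of $\Iso_x$, whatever that group turns out to be, and your formula for $\pi_{f,x}(\mathds{1}_{Z(\mu,g,\nu)})$ goes through verbatim with this $\chi_f$. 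With that correction your argument is complete, though considerably heavier than the paper's direct construction.
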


\begin{proof}
For $g\in G$ and $\mu\in \Lambda$, let $U_g$ and $S_\mu$ be two operators on $\ell^2([x])$ determined by
\begin{align*}
U_g(\delta_y)&=\delta_{g \cdot y},\\
S_\mu(\delta_y)&=\begin{cases}
   f(d(\mu))\delta_{\mu y}   &\text{ if $s(\mu)=y(0,0)$} \\
   0   &\text{ otherwise }.
\end{cases}
\end{align*}
for all $\mu \in \Lambda,\, g \in G, \, y \in [x]$.
Then $\{U, S\}$ is a self-similar $(G,\Lambda)$-family in $B(\ell^2([x]))$. Thus there exists a homomorphism $\pi_{f,x}:\O_{G,\Lambda} \to B(\ell^2([x]))$ such that
 $\pi_{f,x}(s_\mu u_g s_\nu^*)=S_\mu U_g S_\nu^*$ for all $g\in G$ and $(\mu,\nu)\in\Lambda\tensor[_g]\times{_s}\Lambda$.
 In order to prove that $\pi_{f,x}$ is irreducible, by the double commutant theorem, it is enough to show that rank-one operators in $B(\ell^2([x]))$ are in the strong closure of $\pi_{f,x}(\mathcal{O}_{G,\Lambda})$. Fix $y,z \in [x]$. Since $\theta_{\delta_y,\delta_z}=\theta_{\delta_y,\delta_x}\theta_{\delta_x,\delta_z}$, it is sufficient to show that $\theta_{\delta_x,\delta_y}$ can be approximated by a bounded net in $\pi_{f,x}(\mathcal{O}_{G,\Lambda})$ in the strong operator topology. By the definition of $[x]$, there exist $p,q \in \mathbb{N}^k,g\in G$, such that $\sigma^p(x)=g \cdot\sigma^q( y)$. It is straightforward to see that $(\pi_{f,x}(s_{x(0,p+l)}u_{g\vert_{y(q,q+l)}} s_{y(0,q+l)}^*))_{l \in \mathbb{N}^k} \xrightarrow{\text{SOT}} f(p-q)\theta_{\delta_x,\delta_y}$. So $\pi_{f,x}|_{\mathcal{O}_{G,\Lambda}}$ is irreducible.
\end{proof}

\begin{prop}\label{P:Ifx}
Let $I$ be a primitive ideal of $\mathcal{O}_{G,\Lambda}$. Then there exist a maximal tail $T$ of $\Lambda$ and $f \in \widehat{\Per}_{G,H_T \Lambda T}$ such that for any cofinal infinite path $x$ in $\Lambda T$ and for any character $\widetilde{f}$ of $\mathbb{Z}^k$ extending $f$ (see \cite[Corollary~4.41]{Fol95}), we have $I=\ker(\pi_{\widetilde{f},x})$.
\end{prop}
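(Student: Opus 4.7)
The plan is to show that every primitive ideal $I \subseteq \O_{G,\Lambda}$ determines, and is determined by, a canonical pair $(T,f)$ consisting of a maximal tail $T$ and a character $f \in \widehat{\Per}_{G,H_T\Lambda T}$, and that $\ker(\pi_{\tilde f,x})$ equals this canonical ideal regardless of the choice of cofinal $x$ in $\Lambda T$ or extension $\tilde f \in \widehat{\bZ^k}$. The approach adapts the strategy of \cite[\S5]{CKSS14} to the self-similar setting, using the structural results from Section~\ref{S:ssP} and Section~\ref{S:gdinv}.

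First, I would extract the maximal tail. Let $J \subseteq I$ be the largest gauge- and diagonal-invariant ideal contained in $I$; by Theorem~\ref{T:chagi}, $J = I(H)$ for a unique $G$-hereditary $G$-saturated $H \subseteq \Lambda^0$, and I set $T := \Lambda^0 \setminus H$. Primeness of $I$, combined with the lattice-theoretic correspondence of Theorem~\ref{T:chagi} (the intersection of two $G$-hereditary $G$-saturated sets corresponds to an intersection of gauge- and diagonal-invariant ideals), will force $T$ to satisfy conditions (i)--(iii) of Definition~\ref{D:MT}, so $T$ is a maximal tail of $\Lambda$. Via Lemma~\ref{L:quotient}, $I/I(H)$ is a primitive ideal $\bar I$ of $\O_{G,\Lambda T}$ containing no vertex projection $s_v$ with $v \in T$.

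Second, I would extract the character. Since $H_T$ is a nonempty hereditary subset of $\Lambda T$ (Proposition~\ref{Per_G,Lambda T is subgroup}(i)), Lemma~\ref{L:emb} produces a Morita equivalence between $\O_{G,H_T\Lambda T}$ and the ideal $I(H_T) \subseteq \O_{G,\Lambda T}$. Thus $\bar I$ corresponds to a primitive ideal $\hat I$ of $\O_{G,H_T\Lambda T}$ containing no vertex projection. By Proposition~\ref{Per_G,Lambda T is subgroup}(v), $\ca(\Per_{H_T\Lambda T})$ sits centrally in $M(\O_{G,H_T\Lambda T})$; any irreducible representation with kernel $\hat I$ must therefore send each $W_z$ to a scalar, yielding the required character $f \in \widehat{\Per}_{H_T\Lambda T} = \widehat{\Per}_{G,H_T\Lambda T}$.

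Third, I would identify $\hat I$ (and hence $I$) with a canonical ideal depending only on $(T,f)$. Let $K_f$ be the ideal of $\O_{G,H_T\Lambda T}$ generated by $\{W_z - f(z)\cdot 1 : z \in \Per_{H_T\Lambda T}\}$; then $K_f \subseteq \hat I$ by construction. The key claim is that $\O_{G,H_T\Lambda T}/K_f \cong \O_{G,(H_T\Lambda T)/\!\sim_T}$, whose underlying self-similar $\bN^k/\Per_{H_T\Lambda T}$-graph is aperiodic by Proposition~\ref{Per_G,Lambda T is subgroup}(iv) (and inherits Property (FV)). Theorem~\ref{T:aCKU} then forces $\hat I = K_f$, since no vertex projection lies in $\hat I$. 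Finally, given any cofinal $x$ in $\Lambda T$ and any extension $\tilde f$ of $f$, I would verify that $\ker(\pi_{\tilde f,x})$ produces the same canonical data $(T,f)$ via the previous three steps; the uniqueness just established then gives $\ker(\pi_{\tilde f,x}) = I$, which is exactly the independence of the choice of $x$ and $\tilde f$ asserted in the statement. The main technical obstacle is the isomorphism $\O_{G,H_T\Lambda T}/K_f \cong \O_{G,(H_T\Lambda T)/\!\sim_T}$: it requires checking that the equivalence $\sim_T$ on $H_T\Lambda T$ interacts compatibly with the $G$-action and the restriction maps (precisely where standing assumption (Cyc) is used), and that generators on the two sides match via the universal property. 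Once this isomorphism is in hand, the remainder is bookkeeping through Theorem~\ref{T:aCKU}, Lemma~\ref{L:emb}, and Lemma~\ref{L:quotient}.
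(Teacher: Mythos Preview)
Your proposal is correct and follows essentially the same route as the paper's proof: extract $T$, pass via Lemma~\ref{L:quotient} to $\O_{G,\Lambda T}$, use the full-corner embedding of Lemma~\ref{L:emb} and the Rieffel correspondence to reduce to a primitive ideal of $\O_{G,H_T\Lambda T}$, read off $f$ from the central copy of $\ca(\Per_{H_T\Lambda T})$, and then apply Theorem~\ref{T:aCKU} to the aperiodic quotient $P$-graph of Proposition~\ref{Per_G,Lambda T is subgroup}(iv) to pin the primitive ideal down uniquely (doing the same for $\ker(\pi_{\tilde f,x})$). The only cosmetic differences are that the paper defines $T$ directly as $\{v\in\Lambda^0:s_v\notin I\}$ and checks Definition~\ref{D:MT}(iii) using irreducibility rather than your primeness-plus-lattice argument, and that the paper works with the ideal $\tilde J_{T,f}$ generated by $\{s_\mu - f(d(\mu)-d(\nu))s_\nu : (\mu,\nu)\in\C_{H_T\Lambda T}\}$ in place of your $K_f$ --- note that your generators $W_z-f(z)\cdot 1$ lie only in $M(\O_{G,H_T\Lambda T})$ when $|H_T|=\infty$, so you should phrase $K_f$ as the ideal generated by $\{(W_z-f(z))a:a\in\O_{G,H_T\Lambda T}\}$, which then coincides with $\tilde J_{T,f}$.
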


\begin{proof}
Firstly, let $\pi:\mathcal{O}_{G,\Lambda} \to B(H)$ be an irreducible representation such that $\ker \pi = I$. Denote by $\overline{\pi}:\mathcal{O}_{G,\Lambda}/I \to B(H)$ the injective quotient representation. Define $T:=\{v \in \Lambda^0: s_v \notin I\}$. For any $v \in T$ and $\mu \in \Lambda v$, we have $\pi(s_\mu^* s_\mu)=\pi(s_v ) \neq 0$. So $\pi(s_\mu) \neq 0$. Hence $\pi(s_\mu s_\mu^*) \neq 0$. It follows that $\pi(s_{r(\mu)}) \neq 0$ and $r(\mu) \in T$.
For any $v \in T, p\in \mathbb{N}^k$, there exists $\mu \in v \Lambda^p$ such that $\pi(s_\mu s_\mu^*) \neq 0$. So $\pi(s_{s(\mu)})=\pi(s_\mu^* s_\mu) \neq 0$. Hence $s(\mu) \in T$.
 For any $v_1,v_2 \in T$, denote by $I(v_1)$ the ideal of $\mathcal{O}_{G,\Lambda}$ generated by $s_{v_1}$. Since $\pi$ is irreducible, $\overline{\mathrm{span}}(\pi(I(v_1))H)=H$. Since $v_2 \in T$, there exist $\mu,\nu \in \Lambda,g \in G$ such that $\pi(s_{v_2}s_\mu u_g s_{\nu}^* s_{v_1}) \neq 0$. Notice that $s(\mu)=s(\nu)$ due to the assumption that the action fixes vertices. So $v_1 \Lambda s(\mu), v_2 \Lambda s(\mu) \neq \mt$.
Hence $T$ satisfies Definition~\ref{D:MT} (i) - (iii), and so it is a maximal tail.

Notice that $\Lambda^0 \setminus T$ is $G$-hereditary and $G$-saturated because $T$ is a maximal tail and the action fixes vertices. Let $I(\Lambda^0 \setminus T)$ be the ideal of $\mathcal{O}_{G,\Lambda}$ generated by $\{s_v : v \notin T\}$. Then $I(\Lambda^0 \setminus T) \subseteq I$. Denote by $q_1:\mathcal{O}_{G,\Lambda}/ I(\Lambda^0 \setminus T) \to \mathcal{O}_{G,\Lambda}/I$ the quotient map. Recall from Lemma~\ref{L:quotient} that $\mathcal{O}_{G,\Lambda T}$ is isomorphic to $\mathcal{O}_{G,\Lambda} /I(\Lambda^0 \setminus T)$ via $\psi$. So $\overline{\pi} \circ q_1\circ \psi$ is an irreducible representation of $\mathcal{O}_{G,\Lambda T}$.

Let $I(H_T)$ be the ideal of $\mathcal{O}_{G,\Lambda T}$ generated by $\{s_v:v \in H_T\}$. Since $\overline{\pi} \circ q_1\circ \psi(I(H_T)) \neq 0$, by \cite[Proposition~A.26]{RW98} $\overline{\pi} \circ q_1\circ \psi\vert_{I(H_T)}=:\tilde\pi_1$ is irreducible. By Lemma~\ref{L:emb}, $\mathcal{O}_{G,H_T \Lambda T}$ is a full corner of $I(H_T)$. By the Rieffel correspondence we obtain an irreducible representation $\rho_1:\mathcal{O}_{G,H_T \Lambda T} \to B(H_1)$ such that $\ker(\rho_1)=\ker(\tilde\pi_1) \cap \mathcal{O}_{G,H_T \Lambda T}$ (we do not need the formulation of $\rho_1$ and one may refer the material of the Rieffel correspondence to \cite{RW98}). Denote by $\widetilde{\rho_1}:M(\mathcal{O}_{G,H_T \Lambda T}) \to B(H_1)$ the unique extension of $\rho_1$ which is also irreducible. Since $\ca(\Per_{H_T \Lambda T})$ embeds into $ZM(\mathcal{O}_{G,H_T \Lambda T})$ by Proposition~\ref{Per_G,Lambda T is subgroup}, we deduce that $\widetilde{\rho_1} \vert_{\ca(\Per_{H_T \Lambda T})}$ is irreducible. By identifying $\ca(\Per_{H_T \Lambda T})$ with $C(\widehat{\Per}_{H_T \Lambda T})$ via the Fourier transformation, we can find a unique $f \in \widehat{\Per}_{H_T \Lambda T}$ such that $\widetilde{\rho_1}(F)=F(f) \cdot 1_{B(H_1)}$ for all $F \in C(\widehat{\Per}_{H_T \Lambda T})$.

Take an arbitrary character $\widetilde{f}$ of $\mathbb{Z}^k$ extending $f$, and take an arbitrary cofinal infinite path $x$ in $\Lambda T$. By Proposition~\ref{P:irrfx}, we obtain an irreducible representation $\pi_{\widetilde{f},x}:\mathcal{O}_{G,\Lambda} \to B(\ell^2([x]))$. Denote by $\overline{\pi_{\widetilde{f},x}}:\mathcal{O}_{G,\Lambda}/\ker(\pi_{\widetilde{f},x}) \to B(\ell^2([x]))$ the injective quotient representation. It is straightforward to see that $I(\Lambda^0 \setminus T) \subseteq \ker(\pi_{\widetilde{f},x})$. Denote by $q_2:\mathcal{O}_{G,\Lambda}/ I(\Lambda^0 \setminus T) \to \mathcal{O}_{G,\Lambda}/\ker(\pi_{\widetilde{f},x})$ the quotient map. Then $\overline{\pi_{\widetilde{f},x}} \circ q_2\circ \psi$ is an irreducible representation of $\mathcal{O}_{G,\Lambda T}$. By the cofinal property of $x$, we have $\overline{\pi_{\widetilde{f},x}} \circ q_2\circ\psi(I(H_T)) \neq 0$. So by \cite[Proposition~A.26]{RW98} $\overline{\pi_{\widetilde{f},x}} \circ q_2\circ\psi\vert_{I(H_T)}=:\tilde \pi_2$ is irreducible. Again by the Rieffel correspondence we obtain an irreducible representation $\rho_2:\mathcal{O}_{G,H_T \Lambda T} \to B(H_2)$ such that $\ker(\rho_2)=\ker(\tilde\pi_2) \cap \mathcal{O}_{G,H_T \Lambda T}$.

Let $\tilde I_{T,f}$ (resp. $\tilde J_{T,f}$) denote the ideal of $\mathcal{O}_{G, \Lambda T}$ (resp. $\mathcal{O}_{G, H_T\Lambda T}$) generated by $\{s_\mu-f(d(\mu)-d(\nu))s_\nu:(\mu,\nu)\in\C_{H_T \Lambda T}\}$.
Notice that $\tilde I_{T,f} \subseteq I(H_T)$ and $\tilde I_{T,f} \cap \mathcal{O}_{G,H_T \Lambda T}=\tilde J_{T,f}$. For any cycline pair $(\mu,1_G,\nu)$ of $(G,H_T \Lambda T)$, we have $(1/\widetilde{f}(d(\mu)))s_\mu-(1/\widetilde{f}(d(\nu)))s_\nu=(1/\widetilde{f}(d(\mu)))(s_\mu-f(d(\mu)-d(\nu))s_\nu) \in \tilde J_{T,f}$. So there exists a surjective homomorphism $h:\mathcal{O}_{G,H_T \Lambda T / \sim_T} \to \mathcal{O}_{G,H_T \Lambda T}/\tilde J_{T,f}$ such that $h(s_{[\mu]}u_g s_{[\nu]}^*)=f(d(\nu)-d(\mu))s_\mu u_g s_{\nu}^*+\tilde J_{T,f}$ for all $\mu,\, \nu \in H_T \Lambda T,\, g \in G$ with $s(\mu)=g \cdot s(\nu)$.

For any cycline pair $(\mu,1_G,\nu)$ of $(G,H_T \Lambda T)$, by the Fourier transformation, we compute that
\begin{align*}
\rho_1(s_\mu-f(d(\mu)-d(\nu))s_\nu)&=\widetilde{\rho_1}((W_{d(\mu)-d(\nu)}s_\nu-f(d(\mu)-d(\nu)) s_\nu)=0.
\end{align*}
So $\tilde J_{T,f} \subseteq \ker(\rho_1)$. Meanwhile, we calculate that for all $y \in s(\mu)[x]$,
\[
\overline{\pi_{\widetilde{f},x}} \circ q_2\circ \psi(s_\mu-f(d(\mu)-d(\nu))s_\nu)(\delta_y)=\widetilde{f}(d(\mu))(\delta_{\mu y}-\delta_{\nu y})=0.
\]
So $\tilde I_{T,f} \subseteq \ker(\tilde \pi_2)$. Hence $\tilde J_{T,f} \subseteq \ker(\rho_2)$. For $i=1,2$, denote by $\varphi_i:\mathcal{O}_{G,H_T \Lambda T}/\tilde J_{T,f} \to \mathcal{O}_{G,H_T \Lambda T}/\ker(\rho_i)$
 the quotient maps. It is straightforward to see that $\varphi_i \circ h(s_v) \neq 0$
 for all $v \in H_T$. By Theorem~\ref{T:aCKU} and Proposition~\ref{Per_G,Lambda T is subgroup} (iv), $\varphi_i\circ h$ is injective, and so is $\varphi_i$ $(i=1,2)$.
 Thus $J_{T,f}=\ker(\rho_i)$ $(i=1,2)$. The Rieffel correspondence yields that
 \begin{align}
 \label{E:ITf}
 \tilde I_{T,f}=\ker(\overline{\pi} \circ q_1\circ\psi\vert_{I(H_T)})
 =\ker(\overline{\pi_{\widetilde{f},x}} \circ q_2\circ\psi\vert_{I(H_T)}).
 \end{align}
Since $\overline{\pi} \circ q_1$ and $\overline{\pi_{\widetilde{f},x}} \circ q_2$ are irreducible, \cite[Proposition~A.26]{RW98} implies that $\ker(\overline{\pi} \circ q_1 )=\ker(\overline{\pi_{\widetilde{f},x}} \circ q_2 )$. Therefore $I=\ker(\pi_{\widetilde{f},x})$ as both $\overline{\pi}$ and $\overline{\pi_{\widetilde{f},x}}$ are injective.
\end{proof}


Combining Propositions~\ref{P:irrfx} and \ref{P:Ifx}, we are able to characterize all primitive ideals of $\mathcal{O}_{G,\Lambda}$.

The following theorem is a generalization of \cite[Corollary~5.4]{CKSS14}.

\begin{thm}\label{T:primcha}
There exists a bijection
\begin{align}
\label{E:Phi}
\Phi:\amalg_{\{T \subseteq \Lambda^0:T \text{ {\rm is a maximal tail}}\}}\{T\} \times \widehat{\Per}_{H_T \Lambda T} \to \mathrm{Prim}(\mathcal{O}_{G,\Lambda})
\end{align}
such that $\Phi(T,f)=\ker(\pi_{\widetilde{f},x})$ for any maximal tail $T$ of $\Lambda$, any $f \in \widehat{\Per}_{H_T \Lambda T}$, any character $\widetilde{f}$ of $\mathbb{Z}^k$ extending $f$, and any cofinal infinite path $x$ in $\Lambda T$.
\end{thm}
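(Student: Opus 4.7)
The plan is to assemble the theorem from Propositions~\ref{P:irrfx} and \ref{P:Ifx} by showing the assignment $\Phi(T,f):=\ker(\pi_{\widetilde f,x})$ (a)~lands in $\mathrm{Prim}(\mathcal{O}_{G,\Lambda})$, (b)~is independent of the choices of extension $\widetilde f$ and cofinal path $x$, (c)~is surjective, and (d)~is injective. Item~(a) is immediate from the irreducibility of $\pi_{\widetilde f,x}$ (Proposition~\ref{P:irrfx}), and~(c) is exactly the content of Proposition~\ref{P:Ifx}. So the real work is concentrated in (b) and (d), which I would handle uniformly by showing that the pair $(T,f)$ is uniquely recoverable from the ideal.

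First I would recover $T$. Set $T(I):=\{v\in\Lambda^0:s_v\notin I\}$; the opening paragraphs of the proof of Proposition~\ref{P:Ifx} already establish that $T(I)$ is a maximal tail of $\Lambda$ for any primitive $I$. On the other hand, the explicit formula for $\pi_{\widetilde f,x}$ gives $\pi_{\widetilde f,x}(s_v)\neq 0$ iff some $y\in[x]$ satisfies $y(0,0)=v$. Under (FV), if $y\in[x]$ with $\sigma^p(x)=g\cdot\sigma^q(y)$, then $\sigma^q(y)(0,0)=x(p,p)\in T$, and maximality of $T$ (closure under predecessors) forces $y(0,0)\in T$; conversely, cofinality of $x$ in $\Lambda T$ furnishes $\mu$ with $r(\mu)=v$ and $s(\mu)=x(p,p)$ for any $v\in T$, so $y:=\mu\,\sigma^p(x)\in v\Lambda T^\infty\subseteq[x]$. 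Hence $T(\ker\pi_{\widetilde f,x})=T$.

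Next I would recover $f$. The argument in Proposition~\ref{P:Ifx} constructs from $I$ an irreducible $\rho:\mathcal{O}_{G,H_T\Lambda T}\to B(H_1)$ (via Lemma~\ref{L:quotient}, Lemma~\ref{L:emb}, and the Rieffel correspondence), and defines $f\in\widehat{\Per}_{H_T\Lambda T}$ by the identity $\widetilde\rho|_{\ca(\Per_{H_T\Lambda T})}=\mathrm{ev}_f$, where $\widetilde\rho$ is the canonical extension to the multiplier algebra; this is legitimate because $\ca(\Per_{H_T\Lambda T})$ sits centrally in $M(\mathcal{O}_{G,H_T\Lambda T})$ by Proposition~\ref{Per_G,Lambda T is subgroup}(v). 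A direct computation on generators shows that for $z\in\Per_{H_T\Lambda T}$ and $(\mu,\nu)\in\C_{H_T\Lambda T}^{p,q}$ with $p-q=z$, one has $\pi_{\widetilde f,x}(s_\mu s_\nu^*)=\widetilde f(z)\,\pi_{\widetilde f,x}(s_\nu s_\nu^*)$ because $\mu y=\nu y$ on $[x]$ whenever the support is nonempty; summing over such pairs gives $\pi_{\widetilde f,x}(W_z)=\widetilde f(z)\cdot\pi_{\widetilde f,x}(Q_z)$ for an appropriate projection $Q_z$ that is mapped by $\rho$ to the identity. Tracing this through the Rieffel correspondence identifies the recovered character with $\widetilde f|_{\Per_{H_T\Lambda T}}=f$. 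Thus $(T,f)$ is determined by $I$, which simultaneously yields well-definedness and injectivity; surjectivity and the primitivity of the image come from Propositions~\ref{P:Ifx} and \ref{P:irrfx}, respectively.

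The main obstacle I anticipate is this last $f$-recovery step: while the skeleton of the argument is already inside the proof of Proposition~\ref{P:Ifx}, extracting it cleanly requires careful bookkeeping with cycline pairs, the subgroup $\Per_{H_T\Lambda T}\subseteq\mathbb{Z}^k$, and the central embedding of $\ca(\Per_{H_T\Lambda T})$ into $M(\mathcal{O}_{G,H_T\Lambda T})$. Everything else is a routine packaging of the preceding results, so the proof should be quite short once this verification is in hand.
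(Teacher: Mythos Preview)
Your proposal is correct and follows the paper's line; in fact the paper gives essentially no proof of Theorem~\ref{T:primcha} beyond the sentence ``Combining Propositions~\ref{P:irrfx} and \ref{P:Ifx}'', so what you have written is precisely the detail the paper suppresses.  Two remarks that may shorten your write-up: first, the well-definedness (your item~(b)) is already contained in the $\rho_2$-half of the proof of Proposition~\ref{P:Ifx}, since that computation uses only the data $(T,f,\widetilde f,x)$ and shows $\ker(\rho_2)=\tilde J_{T,f}$, which determines $\ker(\pi_{\widetilde f,x})$ via Rieffel and \cite[Proposition~A.26]{RW98}; second, for the $f$-recovery you can bypass the multiplier-algebra bookkeeping by arguing directly that for a cycline pair $(\mu,\nu)$ one has $s_\mu-c\,s_\nu\in\ker(\pi_{\widetilde f,x})$ iff $c=\widetilde f(d(\mu)-d(\nu))=f(d(\mu)-d(\nu))$, since $\pi_{\widetilde f,x}(s_\mu-c\,s_\nu)\delta_y=(\widetilde f(d(\mu))-c\,\widetilde f(d(\nu)))\delta_{\mu\sigma^{d(\nu)}(y)}$ and $s_\nu\notin I$ when $s(\nu)\in T$.
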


For later use, we introduce the following notation:
\begin{align*}
M(\Lambda)&:=\{T\subseteq \Lambda^0: T\text{ is a maximal tail of }\Lambda\},\\
M_\gamma(\Lambda)&:=\{T \in M(\Lambda):T\text{ is aperiodic, i.e., }\Per_{H_T \Lambda T}=\{0\}\}, \\
M_\tau(\Lambda)&:=\{T \in M(\Lambda):T\text{ is periodic, i.e., }\Per_{H_T \Lambda T}\neq \{0\}\}.
\end{align*}
For any $T \in M(\Lambda)$, any $f \in \widehat{\Per}_{G,H_T \Lambda T}$, we simply denote by $I_{T,f}$ the primitive ideal $\Phi(T,f)$ obtained from Theorem~\ref{T:primcha}. For any $T \in M_\gamma(\Lambda)$, since $1$ is the only element in $\widehat{\Per}_{H_T \Lambda T}$, we denote $\Phi(T,1)$ as $I_{T,1}$ or just $I_T$.

\begin{cor}\label{C:con1}
For any $T \in M(\Lambda)$, $f,g \in \widehat{\Per}_{H_T \Lambda T}$ and $D\subseteq \widehat{\Per}_{H_T \Lambda T}$, we have
\begin{enumeratei}
\item $T \in M_{\gamma}(\Lambda)\implies I_T=I(\Lambda^0 \setminus T)$;

\item $\bigcap_{f' \in D}I_{T,f'} \subseteq I_{T,f}\iff f \in \overline{D}$, provided that $k <\infty$;

\item $I_{T,f} \subseteq I_{T,g}\iff f =g$, provided that $k <\infty$.
\end{enumeratei}
\end{cor}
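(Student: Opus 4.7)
For part (i), the containment $I(\Lambda^0 \setminus T) \subseteq I_T$ is immediate from the construction in Proposition~\ref{P:Ifx}: since $[x] \subseteq (\Lambda T)^\infty$, we have $\pi_{1,x}(s_v) = 0$ for every $v \notin T$. For the reverse inclusion, I would factor $\pi_{1,x}$ through the isomorphism $\mathcal{O}_{G,\Lambda}/I(\Lambda^0 \setminus T) \cong \mathcal{O}_{G,\Lambda T}$ of Lemma~\ref{L:quotient} to obtain an induced representation $\overline{\pi}:\mathcal{O}_{G,\Lambda T} \to B(\ell^2([x]))$, and apply the Cuntz-Krieger uniqueness theorem (Theorem~\ref{T:aCKU}) to show $\overline{\pi}$ is injective. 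The hypotheses of that theorem are verified as follows: cofinality of $x$ in $\Lambda T$ gives $\overline{\pi}(t_v) \neq 0$ for each $v \in T$, while aperiodicity of $(G, \Lambda T)$ follows from $T \in M_\gamma(\Lambda)$ (meaning $\Per_{H_T\Lambda T} = \{0\}$) combined with assumption (Cyc) and Proposition~\ref{Per_G,Lambda T is subgroup}, which together force every cycline triple of $(G, \Lambda T)$ to be trivial; Proposition~\ref{P:G-ape} then yields aperiodicity.

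For part (ii), my strategy is to use the central embedding $\ca(\Per_{H_T\Lambda T}) \cong C(\widehat{\Per}_{H_T\Lambda T}) \hookrightarrow ZM(\mathcal{O}_{G, H_T\Lambda T})$ established in Proposition~\ref{Per_G,Lambda T is subgroup}(v), which exhibits $\mathcal{O}_{G, H_T\Lambda T}$ as a $C(\widehat{\Per}_{H_T\Lambda T})$-algebra whose fiber at $f$ is $\mathcal{O}_{G, H_T\Lambda T}/\tilde{J}_{T,f}$, where $\tilde{J}_{T,f}$ is the ideal introduced in the proof of Proposition~\ref{P:Ifx}. From that proof, combined with the Rieffel correspondence applied to the full corner $\mathcal{O}_{G, H_T\Lambda T} \subseteq I(H_T) \subseteq \mathcal{O}_{G,\Lambda T}$ and \cite[Proposition~A.26]{RW98}, the primitive ideal $I_{T,f}$ (which automatically contains $I(\Lambda^0 \setminus T)$) is uniquely determined by the fiber ideal $\tilde{J}_{T,f}$. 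The claim then reduces to the $C(X)$-algebra intersection formula: $\bigcap_{f' \in D}\tilde{J}_{T,f'} \subseteq \tilde{J}_{T,f}$ if and only if $f \in \overline{D}$. The $(\Leftarrow)$ direction follows from continuity of evaluation on nets converging to $f$. For $(\Rightarrow)$, if $f \notin \overline{D}$ then (using $k < \infty$ to ensure $\widehat{\Per}_{H_T\Lambda T}$ is compact Hausdorff) Urysohn's lemma supplies $F \in C(\widehat{\Per}_{H_T\Lambda T})$ with $F|_{\overline{D}} = 0$ and $F(f) = 1$; multiplying $F$ by any vertex projection $s_v$ with $v \in H_T$ produces an element of $\bigcap_{f' \in D}\tilde{J}_{T,f'}$ that does not lie in $\tilde{J}_{T,f}$. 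Part (iii) is then an immediate consequence of (ii) applied to $D = \{f\}$: since $\widehat{\Per}_{H_T\Lambda T}$ is Hausdorff (being a topological group), $\overline{\{f\}} = \{f\}$, forcing $g = f$.

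The main obstacle I anticipate is the precise transfer of the $C(X)$-algebra intersection formula from the level of $\tilde{J}_{T,f'} \subseteq \mathcal{O}_{G, H_T\Lambda T}$ to the primitive ideals $I_{T,f'} \subseteq \mathcal{O}_{G,\Lambda}$: one must verify that the Rieffel correspondence between the ideals of $\mathcal{O}_{G,\Lambda T}$ not containing $I(H_T)$ and the ideals of $\mathcal{O}_{G, H_T\Lambda T}$ preserves arbitrary intersections, and that this correspondence behaves correctly when lifted through $\psi$ from Lemma~\ref{L:quotient}.
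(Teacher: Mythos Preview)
Your argument for (i) is correct and essentially equivalent to the paper's: both ultimately rest on Theorem~\ref{T:aCKU}. The paper extracts this from the machinery already set up in the proof of Proposition~\ref{P:Ifx} (namely the identity $\psi(I_T/I(\Lambda^0\setminus T))\cap I(H_T)=\tilde I_{T,f}$, which is zero when $T$ is aperiodic), while you apply Cuntz--Krieger uniqueness directly to the induced representation on $\mathcal{O}_{G,\Lambda T}$; either route works. Likewise, your reduction in (ii) from the $I_{T,f'}$ to the $\tilde J_{T,f'}$ is exactly what the paper does, and your anticipated obstacle is not serious: the Rieffel correspondence for a full corner is a lattice isomorphism of ideals and therefore preserves arbitrary intersections, and the same holds for the quotient by $I(\Lambda^0\setminus T)$.

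The genuine gap is in your $(\Leftarrow)$ direction of (ii). You assert that $\mathcal{O}_{G,H_T\Lambda T}$ is a $C(\widehat{\Per}_{H_T\Lambda T})$-algebra ``whose fiber at $f$ is $\mathcal{O}_{G,H_T\Lambda T}/\tilde J_{T,f}$'' and that the implication follows from ``continuity of evaluation on nets converging to $f$.'' Neither claim is free. First, the identification of $\tilde J_{T,f}$ with the genuine fiber ideal $\overline{C_0(\widehat{\Per}_{H_T\Lambda T}\setminus\{f\})\cdot\mathcal{O}_{G,H_T\Lambda T}}$ is not proved anywhere in the paper; one inclusion is easy, but the other requires knowing how the central unitaries $W_z$ act on generators. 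Second, and more seriously, for a general $C(X)$-algebra only \emph{upper} semicontinuity of $x\mapsto\|a_x\|$ is automatic, and upper semicontinuity does not give you that the vanishing set $\{f':a_{f'}=0\}$ is closed. (A two-point-fiber example over $[0,1]$ with an extra summand at $0$ already shows $\bigcap_{f'\in D}I_{f'}\not\subseteq I_f$ can fail with $f\in\overline D$.) To make your route work you would have to prove the bundle is \emph{continuous}, which here amounts to showing that the quotient maps $\mathcal{O}_{G,H_T\Lambda T}\to \mathcal{O}_{G,H_T\Lambda T}/\tilde J_{T,f}$ are all isomorphisms onto $\mathcal{O}_{G,H_T\Lambda T/\!\sim_T}$ compatibly in $f$; this is true, but it is precisely the content of the Cuntz--Krieger argument inside Proposition~\ref{P:Ifx}, so you have not actually shortened anything.

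The paper avoids all of this by a direct computation: given $a\in\bigcap_{f'\in D}\tilde J_{T,f'}$ and $f\in\overline D$, it fixes the concrete representation $\pi_{\tilde f,x}$, approximates $a$ by a finite sum $b$, uses $f\in\overline D$ and $k<\infty$ to choose $F\in\widehat{\bZ^k}$ with $F|_{\Per_{H_T\Lambda T}}\in D$ and $F$ close to $\tilde f$ on the finitely many degrees appearing in $b$, and then compares $\pi_{\tilde f,x}(b)(\delta_y)$ to $\pi_{F,x}(b)(\delta_y)$ (the latter is small since $a\in\tilde J_{T,F|_{\Per}}$). Your $(\Rightarrow)$ via Urysohn and part~(iii) are fine and match the paper.
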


\begin{proof}
(i) Clearly $H_T=T$ as $T \in M_{\gamma}(\Lambda)$. On one hand, $\psi(I_T/I(\Lambda^0 \setminus T)) \cap I(H_T)=\psi(I_T/I(\Lambda^0 \setminus T))$, and on the other hand
$\psi(I_T/I(\Lambda^0 \setminus T)) \cap I(H_T)=\tilde I_{T,f}$. So one has $T/I(\Lambda^0 \setminus T)=\tilde I_{T,f}$, which is $0$ as $T$ is aperiodic. So $I_T=I(\Lambda^0 \setminus T)$.

(ii) From the proof of Proposition~\ref{P:Ifx}, one has $I(\Lambda^0 \setminus T) \subseteq I_{T,f}$ for all $f\in \widehat{\Per}_{H_T \Lambda T}$. So it follows from \cite[Corollary~A.28]{RW98} that
\begin{align*}
\bigcap_{f' \in D}I_{T,f'} \subseteq I_{T,f}
&\iff\bigcap_{f' \in D}(I_{T,f'} /I(\Lambda^0 \setminus T)) \subseteq I_{T,f}/I(\Lambda^0 \setminus T)\\
&\iff \bigcap_{f' \in D}\tilde I_{T,f'} \subseteq \tilde I_{T,f} \ \text{ (by \eqref{E:ITf} and Proposition~\ref{P:Ifx})}\\    
&\iff \bigcap_{f' \in D}\tilde J_{T,f'} \subseteq \tilde J_{T,f} \ \text{ (by Lemma~\ref{L:emb})}.
\end{align*}
To show (ii) is equivalent to proving
\[
\bigcap_{f' \in D}J_{T,f'} \subseteq J_{T,f}\iff f \in \overline{D}.
\]
For this, first asume that $\bigcap_{f' \in D}\tilde J_{T,f'} \subseteq \tilde J_{T,f}$. Identifying $\ca(\Per_{H_T \Lambda T})$ with $C(\widehat{\Per}_{H_T \Lambda T})$, we have
$(\bigcap_{f' \in D}\tilde J_{T,f'}) \cap C(\widehat{\Per}_{H_T \Lambda T})\subseteq \tilde J_{T,f} \cap C(\widehat{\Per}_{H_T \Lambda T})$. So
$\bigcap_{f' \in D}C_0(\widehat{\Per}_{H_T \Lambda T} \setminus \{f'\}) \subseteq C_0(\widehat{\Per}_{H_T \Lambda T} \setminus \{f\})$. Hence $f \in \overline{D}$.

Conversely, suppose that $f \in \overline{D}$. Take an arbitrary $\widetilde{f} \in \widehat{\mathbb{Z}^k}$ extending $f$, and take an arbitrary cofinal infinite path $x$ in $\Lambda T$. Fix $a \in \bigcap_{f' \in D}\tilde J_{T,f'}$, fix $y \in [x]$, and fix $\epsilon >0$. Then there exist $\{\mu_i,\nu_i\}_{i=1}^{M} \subseteq H_T\Lambda T,\{g_i\}_{i=1}^{M} \subseteq G$ such that $\Vert \sum_{i=1}^{M}s_{\mu_i}u_{g_i}s_{\nu_i}^*-a\Vert<\epsilon/3$. Denote by $S:=\{1 \leq i \leq M:y(0,d(\nu_i))=\nu_i\}$. Since $f \in \overline{D}$, by \cite[Theorem~II.1.6]{Hun80} there exists $F \in \widehat{\mathbb{Z}^k}$ such that $\vert F(d(\mu_i)-d(\nu_i))-\widetilde{f}(d(\mu_i)-d(\nu_i)) \vert<\epsilon/(3(\vert S\vert+1))$ for all $i \in S$, and that $F \vert_{\widehat{\Per}_{H_T \Lambda T}} \in D$. Then $\Vert \sum_{i=1}^{M}\pi_{F,x}(s_{\mu_i}u_{g_i}s_{\nu_i}^*)(\delta_y)\Vert=\Vert \sum_{i \in S}F(d(\mu_i)-d(\nu_i))\delta_{\mu_i(g_i \cdot \sigma^{d(\nu_i)}(y))}\Vert<\epsilon/3$ because $a \in \bigcap_{f' \in D}\tilde J_{T,f'}$.
\begin{align*}
\Vert\pi_{\widetilde{f},x}(a)(\delta_y)\Vert&<\Vert \sum_{i=1}^{M}\pi_{\widetilde{f},x}(s_{\mu_i}u_{g_i}s_{\nu_i}^*)(\delta_y)\Vert+\epsilon/3
\\&=\Vert \sum_{i \in S}\widetilde{f}(d(\mu_i)-d(\nu_i))\delta_{\mu_i(g_i \cdot \sigma^{d(\nu_i)}(y))}\Vert+\epsilon/3
\\&\leq \Vert \sum_{i \in S}(\widetilde{f}(d(\mu_i)-d(\nu_i))-F(d(\mu_i)-d(\nu_i)))\delta_{\mu_i(g_i \cdot \sigma^{d(\nu_i)}(y))}\Vert
\\&\hskip .5cm +\Vert\sum_{i \in S}F(d(\mu_i)-d(\nu_i))\delta_{\mu_i(g_i \cdot \sigma^{d(\nu_i)}(y))}\Vert+\epsilon/3
\\&<\epsilon.
\end{align*}
So $\Vert\pi_{\widetilde{f},x}(a)(\delta_y)\Vert=0$. Hence $\pi_{\widetilde{f},x}(a)=0$. Therefore $a \in \tilde J_{T,f}$ as $\tilde J_{T,f}=\tilde I_{T,f}\cap \O_{G,H_T\Lambda T}\subseteq I(H_T)\cap \O_{G,H_T\Lambda T}$.

(iii) This follows directly from (ii) above .
\end{proof}

The following corollary is a generalization of \cite[Corollary~5.6]{CKSS14}.

\begin{cor}\label{characterize primitive}
$\O_{G,\Lambda}$ is primitive if and only if $\Lambda^0 \in M_\gamma(\Lambda)$.
\end{cor}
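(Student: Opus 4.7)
The plan is to exploit the explicit bijection $\Phi$ from Theorem~\ref{T:primcha} between $\amalg_T \{T\} \times \widehat{\Per}_{H_T\Lambda T}$ and $\mathrm{Prim}(\mathcal{O}_{G,\Lambda})$. Since $\mathcal{O}_{G,\Lambda}$ is primitive iff $\{0\}$ is a primitive ideal, the problem reduces to determining precisely when $\{0\}$ lies in the range of $\Phi$.

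For the ``if'' direction, if $\Lambda^0 \in M_\gamma(\Lambda)$ then $\widehat{\Per}_{H_{\Lambda^0}\Lambda}$ is trivial and Corollary~\ref{C:con1}(i) immediately yields $I_{\Lambda^0} = I(\Lambda^0 \setminus \Lambda^0) = I(\emptyset) = \{0\}$, so $\{0\}$ is primitive by Theorem~\ref{T:primcha}.

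For the ``only if'' direction, suppose $\{0\} = I_{T,f}$ via $\Phi$ for some $T \in M(\Lambda)$ and $f \in \widehat{\Per}_{H_T\Lambda T}$. The containment $I(\Lambda^0 \setminus T) \subseteq I_{T,f} = \{0\}$, combined with $s_v \ne 0$ for every $v$ (Section~\ref{SSS:nontriv}), forces $T = \Lambda^0$. The remaining task is to show $\Per_{H_{\Lambda^0}\Lambda} = \{0\}$. Arguing by contradiction, suppose $n \in \Per_{H_{\Lambda^0}\Lambda}$ is nonzero. The definition of $H_{\Lambda^0}$ provides a cycline pair $(\mu, \nu)$ in $H_{\Lambda^0}\Lambda$ with $d(\mu) - d(\nu) = n$, and I consider $a := s_\mu - f(n)s_\nu$. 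A gauge-action argument shows $a \ne 0$: were $a=0$, then $s_\mu = f(n)s_\nu$, whence applying $\gamma_z$ would yield $(z^{d(\mu)} - z^{d(\nu)})s_\mu = 0$, forcing $z^n = 1$ for all $z$, contrary to $n \ne 0$. On the other hand, fixing an extension $\tilde f$ of $f$ to a character of $\bZ^k$ and a cofinal infinite path $x$ in $\Lambda$ (available from Lemma~\ref{L:cof} applied to the maximal tail $\Lambda^0$), I compute $\pi_{\tilde f, x}(a)$ on the basis $\{\delta_y\}_{y \in [x]}$: the terms vanish trivially when $y(0,0) \ne s(\mu)$, and when $y(0,0) = s(\mu)$ the cycline identity $\mu y = \nu y$ together with $\tilde f(d(\mu)) = f(n)\tilde f(d(\nu))$ makes the two terms of $a$ cancel. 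Hence $a \in \ker(\pi_{\tilde f, x}) = I_{\Lambda^0, f} = \{0\}$, contradicting $a \ne 0$.

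The main subtlety, rather than a deep obstacle, lies in justifying $\mu y = \nu y$ for the relevant $y \in [x]$: because $(\mu, \nu)$ is cycline within the sub-$k$-graph $H_{\Lambda^0}\Lambda$ (not a priori within all of $\Lambda$), one must confirm that every $y \in [x]$ with $y(0,0) = s(\mu) \in H_{\Lambda^0}$ actually lies in $(H_{\Lambda^0}\Lambda)^\infty$. This will follow from the hereditariness of $H_{\Lambda^0}$ asserted in Proposition~\ref{Per_G,Lambda T is subgroup}(i), which propagates membership in $H_{\Lambda^0}$ down the path $y$.
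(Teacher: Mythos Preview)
Your proof is correct and follows essentially the same approach as the paper's. The only cosmetic difference is in the final step of the ``only if'' direction: the paper invokes identity~\eqref{E:ITf} from the proof of Proposition~\ref{P:Ifx} to conclude that $\tilde I_{T,f}=0$ and hence $\Per_{H_T\Lambda T}=0$, whereas you unpack this by directly checking that $s_\mu - f(n)s_\nu \in \ker(\pi_{\tilde f,x})$ and is nonzero via the gauge action---which is exactly the reasoning implicit in the paper's terse ``by the definition of $\tilde I_{T,f}$''.
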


\begin{proof}
First suppose that $\mathcal{O}_{G,\Lambda}$ is primitive. Then the zero ideal is primitive. So there exist $T \in M(\Lambda)$ and $f \in \widehat{\Per}_{H_T \Lambda T}$ such that $I_{T,f}=0$. Since
$I(\Lambda^0 \setminus T) \subseteq I_{T,f}$, we have $T=\Lambda^0$. It follows from \eqref{E:ITf} that  $\tilde I_{T,f}$ is the zero ideal of $\mathcal{O}_{G,\Lambda T}$. Thus by the definition of $\tilde I_{T,f}$, one has  $\Per_{H_T \Lambda T}=0$. Hence $\Lambda^0 \in M_\gamma(\Lambda)$. The converse is an immediate consequence of Corollary~\ref{C:con1} (i).
\end{proof}


\section{Some Examples}
\label{S:ex}

Let $(G,\Lambda)$ be a self-similar $k$-graph satisfying Conditions~(FV) and (Cyc).
As applications of the results of Section~\ref{S:primitive}, we completely describe the primitive idea space of $\O_{G,\Lambda}$ for two classes of self-similar $k$-graphs: (a) $k<\infty$ and $\Lambda^0$ is a maximal tail; and (b) $\Lambda$ is strongly aperiodic.

\subsection{Single Maximal Tail}

\begin{thm}\label{characterize prim of O_G,Lambda single maximal tail}
Suppose that $k<\infty$ and that $\Lambda$ has only one maximal tail. Then the map $\Phi$ from Theorem~\ref{T:primcha} is a homeomorphism.
\end{thm}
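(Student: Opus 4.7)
The plan is to observe that once $\Lambda$ has a unique maximal tail $T = \Lambda^0$, the domain of $\Phi$ collapses to the single compact abelian group $\widehat{\Per}_{H_T \Lambda T}$, and that Theorem~\ref{T:primcha} has already provided the bijection, so only a topological argument remains. The Jacobson topology on $\mathrm{Prim}(\mathcal{O}_{G,\Lambda})$ is completely described by hulls of kernels: a subset $Y \subseteq \mathrm{Prim}(\mathcal{O}_{G,\Lambda})$ has closure
\[
\overline{Y} = \Bigl\{ P \in \mathrm{Prim}(\mathcal{O}_{G,\Lambda}) : \bigcap_{Q \in Y} Q \subseteq P \Bigr\}.
\]
So I only need to show that $\Phi$ intertwines the closure operators on each side, i.e.\ $\overline{\Phi(D)} = \Phi(\overline{D})$ for every $D \subseteq \widehat{\Per}_{H_T \Lambda T}$.

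For this, I would invoke Corollary~\ref{C:con1}(ii) directly; this is where the hypothesis $k < \infty$ enters. Indeed, for $f \in \widehat{\Per}_{H_T \Lambda T}$ and $D \subseteq \widehat{\Per}_{H_T \Lambda T}$,
\[
I_{T,f} \in \overline{\Phi(D)} \iff \bigcap_{f' \in D} I_{T,f'} \subseteq I_{T,f} \iff f \in \overline{D},
\]
so $\overline{\Phi(D)} = \Phi(\overline{D})$, as required.

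From this equality of closures combined with bijectivity of $\Phi$, both continuity and closedness of $\Phi$ follow by a standard point-set argument: if $C \subseteq \mathrm{Prim}(\mathcal{O}_{G,\Lambda})$ is closed and $E := \Phi^{-1}(C)$, then $\Phi(\overline{E}) = \overline{\Phi(E)} = \overline{C} = C = \Phi(E)$, so injectivity forces $\overline{E}=E$ and $\Phi^{-1}(C)$ is closed; conversely, if $D \subseteq \widehat{\Per}_{H_T \Lambda T}$ is closed then $\Phi(D) = \Phi(\overline{D}) = \overline{\Phi(D)}$ is closed. Hence $\Phi$ is a homeomorphism.

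There is really no serious obstacle here: the heavy lifting was done in establishing Corollary~\ref{C:con1}(ii), and the single-maximal-tail hypothesis is used merely to reduce the disjoint union on the left side of \eqref{E:Phi} to a single compact abelian group so that the closure identification is clean. The only point that would warrant an explicit line in the write-up is the equivalence between the two characterizations of the Jacobson-closure of $\Phi(D)$, so that readers can see unambiguously that Corollary~\ref{C:con1}(ii) gives exactly the statement $\overline{\Phi(D)} = \Phi(\overline{D})$.
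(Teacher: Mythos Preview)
Your proposal is correct and follows essentially the same approach as the paper: both reduce to the single maximal tail $T=\Lambda^0$, invoke Corollary~\ref{C:con1}(ii) as the key input, and deduce the homeomorphism from the resulting closure correspondence. The only cosmetic difference is that the paper phrases the conclusion as ``$D$ is closed $\iff \Phi(D)$ is closed'' rather than your equivalent formulation $\overline{\Phi(D)}=\Phi(\overline{D})$.
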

\begin{proof}
First notice that $M(\Lambda)=\{\Lambda^0\}$ as $\Lambda$ has only one maximal tail.
Hence $\Phi$ in \eqref{E:Phi} is a bijection from $\widehat{\Per}_{G,H_{\Lambda^0} \Lambda}$ onto $\mathrm{Prim}(\mathcal{O}_{G,\Lambda})$. Fix a subset $D \subseteq \widehat{\Per}_{G,H_{\Lambda^0} \Lambda}$. It suffices to show that $D$ is closed if and only if $\Phi(D)=\{I_{\Lambda^0,f}:f \in D\}$ is closed. Suppose that $D$ is closed. For any $I_{\Lambda^0,g} \in \overline{\Phi(D)}$, we have $\bigcap_{f \in D}I_{\Lambda^0,f}\subseteq I_{\Lambda^0,g}$. By Corollary~\ref{C:con1}, $g \in D$, and so $I_{\Lambda^0,g} \in \Phi(D)$. Thus $\Phi(D)$ is closed. Conversely, suppose that $\Phi(D)$ is closed. For any $g \in \overline{D}$, by Corollary~\ref{C:con1}, $\bigcap_{f \in D}I_{\Lambda^0,f} \subseteq I_{\Lambda^0,g}$. So $I_{\Lambda^0,g} \in \overline{\Phi(D)}=\Phi(D)$. Hence $g \in D$ as $\Phi$ is a bijection. Therefore $D$ is closed.
\end{proof}

\begin{eg}
Suppose that $k<\infty$ and that $\Lambda$ is strongly connected. Then $\Lambda$ has only one maximal tail. So Theorem~\ref{characterize prim of O_G,Lambda single maximal tail} applies.
Also, recall that $H_{\Lambda^0}=\Lambda^0$ if $\Lambda$ is strongly connected (\cite{HLRS15}).
\end{eg}

\begin{eg}[{Product of odometers}]
Let $k<\infty$, $G=\bZ$, and $\Lambda$ a single-vertex $k$-graph. Consider the product of odometers in \cite{LY17}:
\begin{enumerate}
\item $\Lambda^{e_i}:=\{x_{\fs}^i\}_{\fs =0}^{n_i-1},1\le i\le k,n_i>1$;
\item $1\cdot {x_\fs^i}=x_{(\fs+1)\ \text{mod } n_i}^i,1\le i\le k,0\le \fs\le n_i-1$;
\item $1|_{x_\fs^i}= \begin{cases}
    0 &\text{ if }0\le \fs< n_i-1\\
    1 &\text{ if }\fs=n_i-1
\end{cases}
\ (1\le i\le k);$
\item $x^i_\fs x^j_\ft = x^j_{\ft'} x^i_{\fs'} \text{ if } 1\leq i<j \leq k, \fs+\ft n_i=\ft'+\fs' n_j$.
\end{enumerate}
By \cite[Theorem~7.4]{LY18}, $\Per_{G,\Lambda}=\Per_\Lambda=\{p\in \bZ^k:\prod_{i=1}^{k}n_i^{p_i}=1\}$. Let $r$ be the rank of $\Per_\Lambda$. Since $\Lambda$ is strongly connected, by Theorem~\ref{characterize prim of O_G,Lambda single maximal tail}, $\mathrm{Prim}(\mathcal{O}_{G,\Lambda}) \cong \mathbb{T}^r$.
\end{eg}

\subsection{Strongly Aperiodicity}

This subsection is motivated by \cite{KP14}.

\begin{defn}[{\cite[Definition~3.1]{KP14}}]
A $k$-graph $\Lambda$ is said to be \emph{strongly aperiodic}, if for any hereditary and saturated set $H \subsetneq \Lambda^0, \Lambda(\Lambda^0 \setminus H)$ is aperiodic.
\end{defn}

\begin{lem}\label{cycline triple is cycline pair}
For any hereditary and saturated set $H \subsetneq \Lambda^0$, any cycline triple of $(G,\Lambda(\Lambda^0 \setminus H))$ is a cycline pair of $(G,\Lambda(\Lambda^0 \setminus H))$.
\end{lem}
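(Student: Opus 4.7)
The plan is to localize the given cycline triple to a maximal tail $T\subseteq\Lambda^0\setminus H$ and invoke hypothesis (Cyc). Given a cycline triple $(\mu,g,\nu)$ of $(G,\Lambda(\Lambda^0\setminus H))$, Property (FV) yields $v:=s(\mu)=s(\nu)\in\Lambda^0\setminus H$. First I would apply Zorn's Lemma to the poset of $G$-hereditary and $G$-saturated subsets of $\Lambda^0$ that contain $H$ but avoid $v$---this poset is nonempty (it contains $H$) and closed under unions of chains---to produce a maximal such set $H_v$, and then set $T:=\Lambda^0\setminus H_v$, so that $v\in T\subseteq\Lambda^0\setminus H$.

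Next I would verify that $T$ is a maximal tail of $\Lambda$ in the sense of Definition~\ref{D:MT}: conditions (i) (backward closure) and (ii) (forward richness) follow directly from $H_v$ being $G$-hereditary and $G$-saturated, while cofinality (iii) is extracted from the maximality of $H_v$, since a failure of cofinality would permit enlarging $H_v$ without engulfing $v$. Because $T$ is backward closed and $s(\mu)=s(\nu)=v\in T$, iterating (i) along the factorizations of $\mu$ and $\nu$ places every intermediate vertex in $T$, so $\mu,\nu\in\Lambda T$. Since $T\subseteq\Lambda^0\setminus H$, we have $v\Lambda T^\infty\subseteq v\Lambda(\Lambda^0\setminus H)^\infty$, and the cycline identity $\mu(g\cdot x)=\nu x$ restricts to $x\in v\Lambda T^\infty$. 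Thus $(\mu,g,\nu)$ is a cycline triple of $(G,\Lambda T)$, and by hypothesis (Cyc) it is a cycline pair, forcing $g=1_G$. The original identity then reads $\mu x=\nu x$ for all $x\in v\Lambda(\Lambda^0\setminus H)^\infty$, which is precisely the cycline pair condition in $(G,\Lambda(\Lambda^0\setminus H))$.

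The main obstacle is the verification of the cofinality condition (iii) for $T=\Lambda^0\setminus H_v$. Concretely, if some $v_1,v_2\in T$ admitted no common descendant in $T$, one would need to exhibit a strictly larger $G$-hereditary and $G$-saturated set that still avoids $v$, contradicting the maximality of $H_v$. Carrying this out cleanly requires tracking how $G$-hereditary and $G$-saturated closures interact with forward and backward reachability together with the $G$-action; the underlying principle mirrors the standard correspondence between maximal tails and maximal (hereditary, saturated) sets excluding a given vertex from the primitive-ideal theory of $k$-graph C*-algebras, but in the self-similar setting one must check that each closure step preserves $G$-invariance so that the contradiction with maximality can be drawn.
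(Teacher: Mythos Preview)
Your overall strategy—reduce to a maximal tail $T\subseteq\Lambda^0\setminus H$ and invoke (Cyc)—is exactly the paper's, but your mechanism for producing $T$ is unnecessarily heavy and leaves the acknowledged gap at condition~(iii). The paper bypasses Zorn's Lemma entirely: it picks any infinite path $x\in s(\mu)\big(\Lambda(\Lambda^0\setminus H)\big)^\infty$ and sets
\[
T:=\{w\in\Lambda^0:w\Lambda x(p,p)\neq\varnothing\text{ for some }p\in\mathbb{N}^k\}.
\]
All three maximal-tail axioms are then immediate (for (iii), two vertices reaching $x(p_1,p_1)$ and $x(p_2,p_2)$ both reach $x(p_1\vee p_2,\,p_1\vee p_2)$), and $T\subseteq\Lambda^0\setminus H$ follows because $H$ is hereditary and every $x(p,p)$ lies outside $H$. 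This infinite-path construction is standard in $k$-graph primitive-ideal theory and avoids any closure arguments.

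Two further remarks. First, your concern about tracking the $G$-action through hereditary/saturated closures is moot: the ambient hypothesis (FV) forces $G$ to fix every vertex, so $G$-hereditary and $G$-saturated collapse to ordinary hereditary and saturated, and no $G$-invariance needs to be preserved. Second, your Zorn's Lemma route \emph{can} be completed—one shows that $H_v$ is in fact maximal among hereditary saturated sets avoiding any $w\in T$ with $v\Lambda w\neq\varnothing$ (since hereditariness pushes $w$ into any such set containing $v$), and then two applications of the saturated-closure argument yield (iii)—but this is considerably more work than the one-line infinite-path construction, with no compensating gain.
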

\begin{proof}
Fix a cycline triple $(\mu,g,\nu)$ of $(G,\Lambda(\Lambda^0 \setminus H))$. Take an arbitrary $x \in s(\mu) (\Lambda(\Lambda^0 \setminus H))^\infty$. Let
$T:=\{v \in \Lambda^0:v \Lambda x(p,p)\ne \mt \text{ for some }p\in \bN^k\}$.
Then $T$ is a maximal tail of $\Lambda$ and $(\mu,g,\nu)$ is a cycline triple of $(G,\Lambda T)$.
From Condition~(Cyc), $(\mu,g,\nu)$ is a cycline pair of $(G,\Lambda T)$. So $g=1_G$ and we are done.
\end{proof}

The following lemma strengthens \cite[Proposition~3.3]{KP14}.

\begin{lem}\label{L:eqape}
The following statements are equivalent.
\begin{enumeratei}
\item
$\Lambda$ is strongly aperiodic;
\item
$\Per_{G,\Lambda (\Lambda^0 \setminus H)}=\{0\}$ for any hereditary and saturated set $H \subsetneq \Lambda^0$;
\item
$\Per_{G,\Lambda T}=\{0\}$ for any maximal tail $T$;
\item
$\mathrm{Prim}(\mathcal{O}_{G,\Lambda})=\{T:T \in M_{\gamma}(\Lambda)\}$;
\item
every ideal of $\mathcal{O}_{G,\Lambda}$ is of the form $I(H)$ for some hereditary and saturated set $H$;
\item
every ideal of $\mathcal{O}_{G,\Lambda}$ is gauge-invariant and diagonal-invariant.
\end{enumeratei}
\end{lem}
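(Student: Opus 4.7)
The plan is to prove the six conditions equivalent via a cyclic chain
(i) $\Rightarrow$ (ii) $\Rightarrow$ (iii) $\Rightarrow$ (iv) $\Rightarrow$ (v) $\Rightarrow$ (vi) $\Rightarrow$ (i).
The first two implications are of a purely graph-theoretic nature.
For (i) $\Leftrightarrow$ (ii), I would invoke the standard fact (as in \cite{KP14}) that aperiodicity of an ordinary $k$-graph is equivalent to triviality of its periodicity group, and then use Lemma~\ref{cycline triple is cycline pair} to conclude that $\Per_{G,\Lambda(\Lambda^0\setminus H)}=\Per_{\Lambda(\Lambda^0\setminus H)}$, since all cycline triples in that subgraph collapse to cycline pairs.
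For (ii) $\Leftrightarrow$ (iii), I would observe that the complement of a maximal tail is hereditary and saturated (an easy check from Definition~\ref{D:MT}), yielding (ii) $\Rightarrow$ (iii); conversely, if a nontrivial cycline pair $(\mu,\nu)$ exists in some $\Lambda(\Lambda^0\setminus H)$, then choosing any infinite path through $s(\mu)$ remaining in $\Lambda(\Lambda^0\setminus H)$ determines a maximal tail $T\subseteq \Lambda^0\setminus H$ within which $(\mu,\nu)$ is still a cycline pair, contradicting (iii).

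For the C*-algebraic implications, (iii) $\Rightarrow$ (iv) follows from Theorem~\ref{T:primcha}: condition (iii) together with Condition (Cyc) and the inclusion $\Per_{H_T\Lambda T}\subseteq \Per_{\Lambda T}=\Per_{G,\Lambda T}$ forces $\Per_{H_T\Lambda T}=\{0\}$ for every maximal tail $T$, so $M(\Lambda)=M_\gamma(\Lambda)$ and every primitive ideal is $I_T=I(\Lambda^0\setminus T)$ by Corollary~\ref{C:con1}(i). For (iv) $\Rightarrow$ (v), I would use that every ideal of the separable C*-algebra $\mathcal{O}_{G,\Lambda}$ is the intersection of the primitive ideals containing it; under (iv) these are all of the form $I(H)$, which are gauge- and diagonal-invariant by the discussion following Definition~\ref{D:hsgdi}. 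Hence every ideal is gauge- and diagonal-invariant and therefore equals some $I(H')$ by Theorem~\ref{T:chagi}. The implication (v) $\Rightarrow$ (vi) is immediate, since each $I(H)$ is gauge- and diagonal-invariant.

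Finally, (vi) $\Rightarrow$ (i) is the most delicate step. Assuming (vi), Theorem~\ref{T:chagi} shows every ideal equals $I(H)$ for some $G$-hereditary $G$-saturated $H$; in particular every primitive ideal $I_{T,f}$ is gauge-invariant. A direct computation based on the formula for $\pi_{\widetilde{f},x}$ in Proposition~\ref{P:irrfx} gives $\pi_{\widetilde{f},x}\circ\gamma_z=\pi_{z\widetilde{f},x}$, so $\gamma_z(I_{T,f})=I_{T,(z\widetilde{f})|_{\Per_{H_T\Lambda T}}}$; by Corollary~\ref{C:con1}(iii) this equals $I_{T,f}$ for every $z\in \bT^k$ only when $\Per_{H_T\Lambda T}=\{0\}$. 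Hence $M(\Lambda)=M_\gamma(\Lambda)$, and the same maximal-tail argument used for (iii) $\Rightarrow$ (ii) then yields strong aperiodicity of $\Lambda$. The main obstacle throughout is the careful bookkeeping among the three periodicity groups $\Per_{G,\Lambda T}$, $\Per_{\Lambda T}$, and $\Per_{H_T\Lambda T}$; Condition~(Cyc), Lemma~\ref{cycline triple is cycline pair}, and Proposition~\ref{Per_G,Lambda T is subgroup}(iii) are precisely what is needed to verify that these groups coincide wherever they are compared.
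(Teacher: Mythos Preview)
Your cycle (i)$\Rightarrow\cdots\Rightarrow$(vi)$\Rightarrow$(i) mirrors the paper's proof almost exactly; the implications (i)$\Leftrightarrow$(ii)$\Leftrightarrow$(iii), (iii)$\Rightarrow$(iv), (iv)$\Rightarrow$(v), and (v)$\Rightarrow$(vi) are argued in the same way in both. The one substantive difference is in closing the loop. The paper proves (vi)$\Rightarrow$(iii) by exhibiting an explicit element: if $T$ is a periodic tail, pick a nontrivial cycline pair $(\mu,\nu)$ of $\Lambda T$ and $f$ with $f(d(\mu)-d(\nu))\neq 1$, observe that $s_\mu - f(d(\mu)-d(\nu))\,s_\nu\in I_{T,f}$, and check directly (by applying $\pi_{\tilde f,x}$) that $\gamma_\lambda$ moves this element out of $I_{T,f}$ whenever $\lambda^{d(\mu)}\neq\lambda^{d(\nu)}$. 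Your route is more structural: the identity $\pi_{\tilde f,x}\circ\gamma_z=\pi_{z\tilde f,x}$ (correct, though passing to kernels gives $\gamma_z(I_{T,f})=I_{T,(z^{-1}\tilde f)|_{\Per_{H_T\Lambda T}}}$, with $z^{-1}$ rather than $z$; this is harmless since $z$ ranges over all of $\bT^k$) together with injectivity of the parametrization forces $\Per_{H_T\Lambda T}=\{0\}$. Both arguments work, but be careful with your citation: Corollary~\ref{C:con1}(iii) carries the hypothesis $k<\infty$, whereas Lemma~\ref{L:eqape} is stated for arbitrary $k$. You should instead invoke the bijectivity of $\Phi$ in Theorem~\ref{T:primcha}, which holds without that restriction and already gives $I_{T,f}\neq I_{T,g}$ for $f\neq g$. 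The paper's hands-on argument sidesteps this issue entirely.
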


\begin{proof}
(i)$\iff$(ii). It follows from \cite[Theorem~4.2]{LY18} and Lemma~\ref{cycline triple is cycline pair}.

(ii)$\implies$(iii). It is straightforward.

(iii)$\implies$(ii). Fix a hereditary and saturated set $H \subsetneq \Lambda^0$. By Lemma~\ref{cycline triple is cycline pair}, we only need to show that every cycline pair of $(G,\Lambda(\Lambda^0 \setminus H))$ is trivial. Fix a cycline pair $(\mu,1_G,\nu)$ of $(G,\Lambda(\Lambda^0 \setminus H))$. We apply the same argument from Lemma~\ref{cycline triple is cycline pair} here. Take an arbitrary $x \in s(\mu) (\Lambda(\Lambda^0 \setminus H))^\infty$. Let $T:=\{v \in \Lambda^0:v \Lambda x(p,p)\ne \mt \text{ for some }p\in \bN^k\}$. Then $T$ is a maximal tail of $\Lambda$ and $(\mu,1_G,\nu)$ is a cycline pair of $(G,\Lambda T)$. Since $\Per_{G,\Lambda T}=\{0\}$, we have $\mu=\nu$.

(iii)$\implies$(iv). It follows from Theorem~\ref{T:primcha} and Corollary~\ref{C:con1}.

(iv)$\implies$(v). Fix an ideal $I$ of $\mathcal{O}_{G,\Lambda}$. By the assumption of (iv) and by \cite[Proposition~A.17]{RW98}, we can find a subset $Y \subseteq M_\gamma(\Lambda)$ such that $I=\bigcap_{T \in Y}I_T$. Notice that for each $T \in Y$, $I_T=I(\Lambda^0 \setminus T)$ due to Corollary~\ref{C:con1}. Thus $I_T$ is both gauge-invariant and diagonal-invariant by Theorem~\ref{T:chagi}, and so is $I$. Therefore Thus $I=I(H(I))$ (in fact, $H(I)=\Lambda^0 \setminus \bigcup_{T \in Y}T$).

(v)$\implies$(vi). It is straightforward.

(vi)$\implies$(iii). Suppose that there exists a periodic maximal tail $T$, that is, $\Per_{G,\Lambda T}\neq\{0\}$. Choose an arbitrary $z \in \Per_{G,\Lambda T} \setminus \{0\}$ and an arbitrary $f \in \widehat{\Per}_{G,\Lambda T}$ such that $f(z) \neq 1$. Write $z=p-q$ for some $p,q \in \mathbb{N}^k$. Find $\mu \in \Lambda^p$ and $\nu \in \Lambda^q$ such that $(\mu,1_G,\nu)$ a cycline pair of $\Lambda T$. Notice that $s_\mu-f(z)s_\nu \in I_{T,f}$. Pick up an arbitrary $\lambda \in \mathbb{T}^k$ such that $\lambda^{d(\mu)} \neq \lambda^{d(\nu)}$. Then we observe that $\gamma_\lambda(s_\mu-f(z)s_\nu) \notin I_{T,f}$. So $I_{T,f}$ is not gauge-invariant, which is a contradiction. Hence $\Per_{G,\Lambda T}=\{0\}$.
\end{proof}

The following theorem is a generalization of \cite[Theorem~3.15]{KP14}.

\begin{thm}\label{T:chasape}
Suppose that $\Lambda$ is strongly aperiodic. Let $\mt\ne Y\subseteq M(\Lambda)$ and $T_0 \in M(\Lambda)$. Then $T_0 \in \overline{Y}\iff T_0 \subseteq \bigcup_{T \in Y}T$.
\end{thm}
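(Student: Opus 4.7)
The plan is to translate the topological condition $T_0 \in \overline{Y}$ into an ideal-theoretic containment using the identification of $\mathrm{Prim}(\O_{G,\Lambda})$ in the strongly aperiodic case (Lemma~\ref{L:eqape}), and then to reduce that to a set-theoretic containment via the bijection of Theorem~\ref{T:chagi}.

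\textbf{Key steps.} First, since $\Lambda$ is strongly aperiodic, Lemma~\ref{L:eqape} gives $M(\Lambda)=M_\gamma(\Lambda)$ and $\mathrm{Prim}(\mathcal{O}_{G,\Lambda})=\{I_T:T\in M(\Lambda)\}$, and Corollary~\ref{C:con1}(i) gives $I_T=I(\Lambda^0\setminus T)$ for each $T\in M(\Lambda)$. The definition of the Jacobson topology then reads
\[
T_0\in\overline{Y}\iff \bigcap_{T\in Y}I_T\subseteq I_{T_0}.
\]
Next, each $\Lambda^0\setminus T$ is $G$-hereditary and $G$-saturated (as noted in the proof of Proposition~\ref{P:Ifx}, via Property (FV)), so writing $H_T:=\Lambda^0\setminus T$ we are reduced to analyzing $\bigcap_{T\in Y}I(H_T)$.

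I claim $\bigcap_{T\in Y}I(H_T)=I\!\bigl(\bigcap_{T\in Y}H_T\bigr)$. The inclusion $\supseteq$ is automatic from Theorem~\ref{T:chagi}. For $\subseteq$, strong aperiodicity is crucial: by Lemma~\ref{L:eqape}(v), every ideal of $\mathcal{O}_{G,\Lambda}$ has the form $I(H(I))$; applying this to $I:=\bigcap_{T\in Y}I(H_T)$ and using Lemma~\ref{L:HIH} gives
\[
H(I)=\{v\in\Lambda^0:s_v\in I(H_T)\text{ for every }T\in Y\}=\bigcap_{T\in Y}H_T,
\]
so $I=I(\bigcap_{T\in Y}H_T)$. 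Finally, the bijection in Theorem~\ref{T:chagi} translates ideal containment into set containment, yielding
\[
I\!\bigl(\bigcap_{T\in Y}H_T\bigr)\subseteq I(H_{T_0})\iff \bigcap_{T\in Y}H_T\subseteq H_{T_0},
\]
and de Morgan turns the right-hand side into $T_0\subseteq\bigcup_{T\in Y}T$, which completes the proof.

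\textbf{Main obstacle.} There is no deep obstacle; the one step that deserves care is the identity $\bigcap_{T\in Y}I(H_T)=I\!\bigl(\bigcap_{T\in Y}H_T\bigr)$, and this is exactly where strong aperiodicity must be invoked (through Lemma~\ref{L:eqape}(v)), since without it a gauge-invariant ideal need not be diagonal-invariant and the neat correspondence of Theorem~\ref{T:chagi} would not suffice to identify the intersection.
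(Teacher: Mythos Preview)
Your proof is correct and follows essentially the same route as the paper's: translate the closure condition into $\bigcap_{T\in Y}I(\Lambda^0\setminus T)\subseteq I(\Lambda^0\setminus T_0)$ via Lemma~\ref{L:eqape} and Corollary~\ref{C:con1}(i), then reduce to set containment through Theorem~\ref{T:chagi}. One minor remark on your ``Main obstacle'' paragraph: the identity $\bigcap_{T\in Y}I(H_T)=I\bigl(\bigcap_{T\in Y}H_T\bigr)$ does not actually require strong aperiodicity, because each $I(H_T)$ is already gauge- and diagonal-invariant and these properties pass to arbitrary intersections, so Theorem~\ref{T:chagi} applies directly to the intersection; strong aperiodicity is genuinely needed only at the earlier step, to identify each primitive ideal $I_T$ with $I(\Lambda^0\setminus T)$.
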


\begin{proof}
This follows from
\begin{align*}
T_0 \in \overline{Y}
&\iff \bigcap_{T \in Y}T \subseteq T_0\\
&\iff \bigcap_{T \in Y}I(\Lambda^0 \setminus T) \subseteq I(\Lambda^0 \setminus T_0)\ (\text{by Corollary~\ref{C:con1} and Lemma~\ref{L:eqape}})\\
&\iff T_0 \subseteq \bigcup_{T \in Y}T\ (\text{by Theorem~\ref{T:chagi}}).
\end{align*}
\end{proof}

\end{document}